\documentclass[a4paper, 11pt]{amsart}
\usepackage{graphicx}
\usepackage[english]{babel}
\usepackage{amssymb,amsmath,amsthm}
\usepackage[margin=1in]{geometry}
\usepackage{hyperref}
\usepackage{bbm}
\usepackage{booktabs}
\usepackage{tikz-cd}
\usepackage{enumitem}
\usepackage{amscd}
\usepackage{mathtools}
\setlist[enumerate]{label={(\alph*)}}
\allowdisplaybreaks

\newtheorem{theorem}{Theorem}
\numberwithin{theorem}{section}

\newtheorem{definition}[theorem]{Definition}
\newtheorem{lemma}[theorem]{Lemma}
\newtheorem{remark}[theorem]{Remark}
\newtheorem{example}[theorem]{Example}
\newtheorem{proposition}[theorem]{Proposition}
\newtheorem{corollary}[theorem]{Corollary}

\newtheorem{notation}[theorem]{Notation}

\def\K{{\mathbb K}}
\def\C{{\mathbb C}}
\def\Z{{\mathbb Z}}
\def\N{{\mathbb N}}
\def\Q{{\mathbb Q}}
\def\R{{\mathbb R}}
\def\P{{\mathbb P}}

\def\Bij{{\operatorname{Bij}}}

\DeclareMathOperator{\elem}{Elem}

\DeclareMathOperator{\Gr}{Gr}

\DeclareMathOperator{\rank}{rk}\let\rk\rank
\DeclareMathOperator{\corank}{crk}\let\crk\corank

\def\Sbundle{\mathcal S}
\def\Qbundle{\mathcal Q}
\let\delete\setminus
\def\contract{\mathbin{/}}
\DeclareMathOperator{\nullity}{nl}\let\nl\nullity
\def\tang{\mathcal T}
\def\point{\mathord{\mathrm{pt}}}
\DeclareMathOperator{\Span}{span}
\DeclareMathOperator{\conv}{conv}
\title{Equivariant Tutte polynomial}
\author[M. Bauer \and M. Doležálek \and M. Mišinová \and S. S\l obodianiuk \and J. Weigert]{
    Mario Bauer
    \and
    Matěj Doležálek
    \and
    Magdaléna Mišinová
    \and
    Semen S\l obodianiuk
    \and
    Julian Weigert
}

\begin{document}

\begin{abstract}
    We use the equivariant cohomology ring of the permutohedral variety to study matroids and their invariants. Investigating the pushforward of matroid Chern classes defined by Berget, Eur, Spink and Tseng to the product space $\P^n \times \P^n$, we establish an equivariant generalization of the Tutte polynomial of a matroid. We discuss how this polynomial encodes properties of the matroid by looking at special evaluations. We further introduce an equivariant generalization of the reduced characteristic polynomial of a matroid. 
\end{abstract}
\maketitle

\def\mate{\textit}

\section{Introduction}

Let $n\in \N$ and consider the Cremona map \begin{align*}
    \P(\C^{n+1})&\dashrightarrow \P(\C^{n+1})
    \\
    (x_0:\ldots:x_n&) \mapsto (x_0^{-1}:\ldots :x_n^{-1}).
\end{align*}
Let further $L\subseteq \C^{n+1}$ be a linear subspace representing a matroid $M$. Restricting the Cremona map to the projectivization of $L$ and taking the Zariski-closure of its graph gives a closed subvariety  $\Gamma_L\subseteq \P^n\times \P^n$. June Huh in his famous paper \cite{Huh} showed that up to sign the coefficients of the reduced characteristic polynomial of $M$ coincide with the multidegree of $\Gamma_L$. In other words he found a way to interpret a combinatorial invariant of $M$ as the class of $\Gamma_L$ in the cohomology ring of $\P^n\times \P^n$. This allowed him to use algebraic machinery, namely a generalization of the Hodge Index Theorem, to prove a longstanding conjecture about the log-concavity of the coefficients of the reduced characteristic polynomial of representable matroids and in particular of graphs. In \cite{CombinatorialHodge} this result was generalized to arbitrary matroids. 

Similarly  to \cite{Huh} utilizing an algebro-geometric interpretation of the reduced characteristic polynomial there have been several approaches to find a geometric interpretation for the full Tutte polynomial of a matroid, e.g. in \cite{tautological-classes} or in \cite{Fink-Speyer}. In our article we will focus on the approach of \cite{tautological-classes} which utilizes the permutohedral variety. This variety and its appearance in matroid theory was studied in great detail in \cite{HuhPermutohedral}.

The intersection theory of subvarieties of the permutohedral variety is a rich and interesting subject in itself. One may view this as a simpler version of the varieties of complete quadrics or complete colineations whose intersection theory allowed to answer classical questions in enumerative geometry. While this is not the main direction of our article, we point the interested reader to \cite{DMS} which contains a great overview on this topic.

Blowing-up $\P^n$ in the indeterminacy locus of the Cremona map yields the permutohedral variety $\Pi_n$,  which comes with projections to the domain and codomain of the Cremona map as the following diagram shows:
\begin{equation}\label{diagram}
\begin{tikzcd}
                        & \Pi_n \arrow[d, "\pi^n"]               &      \\
                        & \P^n \times \P^n \arrow[ld] \arrow[rd] &      \\
\P^n \arrow[rr, dashed,"\text{Crem}"] &                                        & \P^n
\end{tikzcd}
\end{equation}

It was noticed from the beginning that in this picture all objects come with a natural action of the torus $T=(\C^*)^{n+1}$, induced by the action on the domain of the Cremona map given by
\begin{equation}
    \label{eq:theaction}
    (t_0,\ldots,t_n)\cdot [x_0:\ldots:x_n] := [t_0x_0: \ldots :t_nx_n]
\end{equation} 
for $(t_0,\ldots,t_n)\in T, [x_0:\ldots: x_n] \in \P^n$. In particular, instead of working in the usual cohomology ring we can now also work in the equivariant cohomology ring. This ring is well studied for all appearing varieties and has accessible combinatorial descriptions. As an example, for $\Pi_n$ we get \begin{align*}
    H_T(\Pi_n)&=\Big\{(f_\sigma)_{\sigma \in S_{n+1}} \in \prod_{\sigma \in S_{n+1}}\Z[t_0,\ldots,t_n]\mid \forall \sigma \in S_{n+1}:\\&\forall i\in \{0,\ldots,n-1\}:f_\sigma \equiv f_{\sigma\circ (i,i+1)} \, (\text{mod } t_{\sigma(i)}-t_{\sigma(i+1)})\Big\}
\end{align*} 
where $S_{n+1}$ denotes the permutation group on $n+1$ elements $\{0,\ldots,n\}$.  
For a detailed discussion of equivariant cohomology we refer to \cite{Fulton}. \\
Each equivariant cohomology ring also comes with a natural surjection to its non-equivariant version, so we can ask for analogues of the class $[\Gamma_L]$ in the equivariant cohomology of $\Pi_n$.
In \cite{tautological-classes} the authors define tautological classes $c_i([\Sbundle_M^\vee]),c_j([\Qbundle_M])$ in  $H_T(\Pi_n)$ associated to a matroid, where $i\in \{0,\ldots, \rank(M)\},j\in \{0,\ldots,\corank(M)\}$. These are Chern classes of certain $K$-classes $[\Sbundle_M^\vee],[\Qbundle_M]$. We recall the precise definition of the Chern classes below, see Definition \ref{def_tautological_classes}. Introducing two new variables $w,z$ to keep track of the grading, we consider the classes 
\begin{align*}
        c([\Sbundle_M^\vee],z)&:=\sum_{i=0}^{\rank(M)}c_i([\Sbundle_M^\vee])z^i\in H_T(\Pi_n)[z], \\
      c([\Qbundle_M],w)&:=\sum_{i=0}^{\corank(M)}c_i([\Qbundle_M])w^i\in H_T(\Pi_n)[w]. \\  
    \end{align*}

% \textcolor{green}{Rewrite: remove this definition, just say there exist these classes and we introduce in def 2.9}
% \begin{definition}
%     Let $M$ be a matroid on $E=\{0,\ldots,n\}$. For each permutation $\sigma \in S_{n+1}$ write $B_\sigma$ for the lexicographically first basis of $M$ according to the order induced by $\sigma$. For $i=0,\ldots,\rank(M)$, the \emph{$i$-th tautological sub  Chern class of $M$} is defined as \textcolor{red}{ $c_i(S)$ is a formal expression defined in ...."}
%     \begin{align*}
%         c_i([\Sbundle_M^\vee]):=(\elem_i(\{t_e\}_{e \in B_\sigma}))_{\sigma \in S_{n+1}},
%     \end{align*}
%     where $\elem_i(\{t_e\}_{e \in B_\sigma})$ denotes the $i$-th elementary symmetric polynomial in the variables $(t_e)_{e \not \in B_\sigma}$.
%     Similarly for $i=0,\ldots, \corank(M)=n+1-\rank(M)$, the \emph{$i$-th tautological quotient Chern class of $M$} is defined as 
%     \begin{align*}
%         c_i([\Qbundle_M]):=(\elem_i(\{-t_e\}_{e \notin B_\sigma}))_{\sigma \in S_{n+1}}.
%     \end{align*}
%     We will also write \begin{align*}
%         c([\Sbundle_M^\vee],z)&:=\sum_{i=0}^{\rank(M)}c_i([\Sbundle_M^\vee])z^i\in H_T(\Pi_n)[z], \\
%       c([\Qbundle_M],w)&:=\sum_{i=0}^{\corank(M)}c_i([\Qbundle_M])w^i\in H_T(\Pi_n)[w]. \\  
%     \end{align*}
% \end{definition}

Recall the map $\pi^n$ introduced in diagram \eqref{diagram}.
Theorem A of \cite{tautological-classes} relates the non-equivariant pushforward along $\pi^n$ of the class $c([\Sbundle_M^\vee],z)c([\Qbundle_M],w)$ to $\P^n\times \P^n$ with the Tutte polynomial of the matroid $M$. More precisely note that $\P^n\times \P^n$ has non-equivariant cohomology $H(\P^n\times \P^n)= \Z[x,y]/(x^{n+1},y^{n+1})$, so the pushforward of $c([\Sbundle_M^\vee],z)c([\Qbundle_M],w)$ to $\P^n\times \P^n$ can be understood as a polynomial with integer coefficients in four variables $x,y,z,w$ of degree at most $n$ in $x$ and $y$. This polynomial is precisely \footnote{This polynomial is equal to $(xy)^{|E|-1} t_M \left( \frac{1}{x}, \frac{1}{y}, z,w \right)$  in Theorem A from \cite{tautological-classes}. The reason for inverting the grading of the variables $x,y$ is the difference between pushing $c([\Sbundle_M^\vee],z)c([\Qbundle_M],w)$ to $\P^n\times \P^n$ as we did here and intersecting it with strict transforms of hyperplanes from the left and right $\P^n$ and then pushing the result to a point, as the authors of \cite{tautological-classes} do. For a more detailed discussion of this issue we refer to Chapter \ref{chapter 4}.}
\begin{align*}
(xy)^{|E|-1}\left(\frac{1}{x}+\frac{1}{y}\right)^{-1}\left(\frac{1}{y}+z\right)^{\rank(M)}\left(\frac{1}{x}+w\right)^{|E|-\rank(M)}T_M\left(\frac{x + y}{x(yz+1)}, \frac{x + y}{y(wx+1)}\right)
\end{align*}
where $T_M(x,y):=\sum_{S\subseteq E}(x-1)^{\rank(M)-\rank_M(S)}(y-1)^{|S|-\rank_M(S)}$ denotes the Tutte-polynomial of $M$.
%\frac{\frac{1}{x}+\frac{1}{y}}{\frac{1}{y}+z},\frac{\frac{1}{x}+\frac{1}{y}}{\frac{1}{x}+w}\right)
This can be viewed as a generalization of the connection between $[\Gamma_L]$ and the characteristic polynomial. In fact for representable matroids, $[\Gamma_L]$ coincides with the non-equivariant top Chern class $c_{\corank(M)}([\Qbundle_M])$.

In this article we will generalize this formula to equivariant cohomology, that is we compute the pushforward of $c([\Sbundle_M^\vee],z)c([\Qbundle_M],w)$ to $\P^n\times \P^n$ as an element of $H_T(\P^n\times \P^n)$. The equivariant cohomology ring of $\P^n\times \P^n$ under the $T$-action \eqref{eq:theaction} is isomorphic to \[\frac{\Z[t_0,\ldots,t_n][x,y]}{\prod_{i=0}^n(x+t_i),\prod_{i=0}^n(y-t_i)}.\] The map $H_T(\P^n\times \P^n)\to H(\P^n\times \P^n)$ which forgets the torus action is in this description given by substituting $t_i=0$ for all $i=0,\ldots,n$. Therefore in fact our computation will specify to Theorem A of \cite{tautological-classes} precisely when setting $t_i=0$ for all $i=0,\ldots,n$. 
Since the non-equivariant version relates the pushforward $\pi^n_*(c([\Sbundle_M^\vee],z)c([\Qbundle_M],w))$ to the Tutte polynomial $T_M$ of $M$, we can ask what plays the role of $T_M$ in the equivariant setting. Following the suggestion in \cite[§4.5]{Mateusz-paper}, this leads us to the definition of the equivariant Tutte polynomial.

In Section \ref{Preliminaries}
we will recall some standard definitions and results that we will use throughout the article. Most importantly we will briefly recall the definition of the Permutohedral variety and its equivariant cohomology ring as well as the tautological classes associated to a matroid from \cite{tautological-classes}.
We point the interested reader to \cite{HuhPermutohedral} for more details on the intersection theory of the permutohedral variety and its combinatorial properties.
For more background on matroids in general we refer to \cite{matroid theory}, for equivariant cohomology our main source is \cite{Fulton}. A very good introduction to toric varieties can be found in \cite{cls}, for a quick overview we recommend \cite[Chapter 8]{Mateusz-Bernd-Book}.

Section \ref{sec:push} contains the main computation. Our main tool to do so is recursion on matroids: All appearing cohomology classes associated to a matroid $M$ can be described in terms of the corresponding classes associated to the contraction $M\contract e$ and deletion $M\delete e$, where $e\in E$. This allows us to inductively prove Theorem \ref{thrm:pushforward-graded-chern} which gives an explicit formula for $\pi_*^n(c([\Sbundle_M^\vee],z)c([\Qbundle_M],w))$ in terms of the rank function of the matroid $M$. \\
We then proceed to motivate the definition of the equivariant Tutte polynomial in Section \ref{chapter 4} as follows.
\begin{definition}[Equivariant Tutte polynomial]
    Let $M$ be a matroid with ground set $E$. Then we define the \emph{equivariant Tutte polynomial} of $M$ as the following four-variable polynomial with coefficients in $\Z[E]$:
    $$
        \widehat{T}_M(x,y,r,s) = \sum_{S\subseteq E} (x-1)^{\rank(M)-\rank_M(S)}(y-1)^{\nullity_M(S)}\prod_{e\in S}(1+r t_e) \prod_{e\notin S} (1+s t_e).
    $$
    By convention, we set $ \widehat{T}_M(x,y,r,s)=1\in \Z$ for $M$ supported on the empty ground set.
\end{definition}
The equivariant Tutte polynomial and the \emph{multivariate Tutte polynomial} from \cite{PottsSokal} both recover the matroid completely and hence can be obtained from each other, see \eqref{potts}. Our computations can be understood as bringing the multivariate Tutte polynomial into the algebro-geometric world.

The remainder of Section \ref{chapter 4} is dedicated to checking 
basic properties of $\widehat{T}_M(x,y,r,s)$ such as recursive behavior, compatibility with direct sums and duals of matroids and valuativity. In particular we prove that  the multivariate Tutte polynomial of \cite{PottsSokal} is a valuative function.

One reason for the popularity of the usual Tutte polynomial is that this invariant can specify to any other generalized Tutte-Grothendieck invariant, that is any invariant behaving nicely with respect to deletion and contraction of elements of the matroid. In Section \ref{section:equi-T_G} we will show that the equivariant Tutte polynomial satisfies a similar universality result among functions assigning some value in a ring to labeled matroids and behaving well with respect to deletion and contraction. \\
In the final chapter we are interested in the combinatorial properties of the equivariant Tutte polynomial. One of the key results is Proposition \ref{evaluating_r_s} which states that setting the last two variables $(r,s)$ to $(1,0)$ (respectively $(0,1)$) read as a polynomial in the variables $t_e$ has coefficients in $\Z[x,y]$ that are usual Tutte polynomials of smaller matroids obtained from $M$ via contraction (respectively deletion) of subsets. This allows us to identify a number of combinatorial interpretations for certain evaluations of the equivariant Tutte polynomial, they are summarized in table \ref{tab:eval_of_tutte}. We also define an equivariant analogue of the reduced characteristic polynomial of a matroid, which for graphic matroids is closely related to the chromatic polynomial. Just like for the Tutte polynomial this is done by performing the pushforward of $c([\Qbundle_M])$ to $\P^n\times \P^n$ equivariantly. The result is the following definition.
\begin{definition}
    Let $M$ be a matroid on a groudset $E$. We define the equivariant reduced characteristic polynomial $\widehat{\chi}_M(q)\in \Z[E][q]$ of $M$ as follows:
    $$
        \widehat{\chi}_M(q) = \frac{(-1)^{\rank(M)}}{q-1}\cdot \widehat{T}_M(1-q, 0, q, 1) = \frac{1}{q-1} \cdot \sum_{S\subset E} q^{\rank(M)-\rank_M(S)}(-1)^{|S|}\cdot \prod_{e\notin S} (1+t_e) \prod_{e\in S} (1+qt_e).
    $$
\end{definition}
We observe that both $\widehat{T}_M$ and $\widehat{\chi}_M$ as well as many of their specializations recover the labelled matroid $M$. Our final result investigates precisely which evaluations of $\widehat{T}_M$ can still recover $M$.
\bigskip \\
\textbf{Acknowledgements.} We thank Mateusz Micha\l ek who introduced us to this subject and supervised us in the process of writing this article. We further thank Andrzej Weber 
 and Jaros\l aw A. Wiśniewski for helpful discussions on equivariant cohomology rings. Lastly we thank the reviewers for their careful reading and for pointing out missing references. The second, third and fourth author are partially funded by the German Academic Exchange Service DAAD.  

\section{Preliminaries}
\label{Preliminaries}
We start by briefly recalling a useful tool for studying equivariant cohomology: the localization principle. We further describe how the equivariant Gysin homomorphism looks like.
In Section \ref{polyt} we will describe a polytope called permutohedron, to which we associate a smooth, toric variety in Section \ref{variet}, called the permutohedral variety. In Section \ref{coho} we will briefly discuss its equivariant cohomology ring and how the localization principle applies to it. In Section \ref{pxp} we will describe how to pass between localization formula and another description of equivariant cohomology. We also introduce an equivariant map $\pi^n:\Pi_n\to \P^n\times \P^n$ which will play a crucial role in Section \ref{sec:push}. In the last section of the preliminaries we recall the construction of tautological classes of matroids which were introduced in \cite{tautological-classes}. The pushforward along $\pi^n$ of these classes gives rise to many interesting polynomials, for example, an equivariant version of the Tutte polynomial, which motivates us to write this article. 

\subsection{Localization and Gysin pushforward}

For the discussion which follows we make a technical definition:
\begin{definition} \label{coprime}
We say that characters $\chi_1,\chi_2$ of a torus $T$ are \emph{relatively prime} if there are no $a,b\in \Z\setminus \{0\}$ such that $\chi_1^a=\chi_2^b$.
\end{definition}
We recall a version of the localization principle which was proven in \cite{Chang-S} and \cite{modernCS}. For a modern exposition we also refer to \cite[Corollary 7.4.3]{Fulton}: 

\begin{theorem}\label{localization}
Let $X$ be a nonsingular variety with an action of a torus $T$ such that the set of torus fixed points $X^T$ is finite. Let $H_T(X)$ be the equivariant cohomology ring with coefficients in $\Z$.
%The ring $H_T(X^T)$ is just product of copies of $H_T(\point)$ indexed by fixed points of $X$.
Assume that $H_T(X)$ is free over $H_T(\point)$. Suppose that for each $p\in X^T$, the characters acting on the tangent space over $p$ are relatively prime.
Then the ring $H_T(X^T)$ is just $\Pi_{p \in X^T} H_T(\point)$, and $H_T(X)\subset H_T(X^T)$ consists of exactly those tuples $(f_p)_{p \in X^T} \in H_T(X^T)$
%\[(f_p)_{p\in X^T}\in \prod_{p\in X^T}H_T(p)\]
 where the difference $f_p-f_q$ is divisible by the character acting on $1$-dimensional orbit $c_{p,q}$ connecting points $p,q\in X^T$ for all such $c_{p,q}$.
\end{theorem}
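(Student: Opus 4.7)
The plan is to split the theorem into four steps: (i) identify $H_T(X^T)$, (ii) show injectivity of the restriction $\rho: H_T(X) \to H_T(X^T)$, (iii) verify that $\operatorname{im}(\rho)$ lies in the ``GKM subring'' cut out by the divisibility conditions, and (iv) prove that $\rho$ surjects onto that subring.

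The first three steps are comparatively quick. The decomposition $H_T(X^T) = \prod_{p \in X^T} H_T(\point)$ is immediate from finiteness of the fixed locus. For injectivity I would invoke the Atiyah--Bott--Borel localization theorem, which says that after inverting the nonzero characters of $T$ the restriction becomes an isomorphism; since $H_T(X)$ is free over $H_T(\point)$ and hence torsion-free, already the unlocalized map $\rho$ is injective. For (iii), given fixed points $p,q$ joined by a $1$-dimensional orbit $C$, the closure $\bar C$ is $T$-equivariantly isomorphic to a $\P^1$ on which $T$ acts through a single character $\chi_{p,q}$; a direct calculation gives $H_T(\bar C) = \{(f_p,f_q) : f_p \equiv f_q \pmod{\chi_{p,q}}\}$, and functoriality of restriction forces every class in $H_T(X)$ into the GKM subring.

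The hard part is surjectivity in step (iv). The workhorse would be the Chang--Skjelbred lemma, which under the freeness hypothesis identifies $\operatorname{im}(\rho)$ with the image of $H_T(X^{(1)}) \to H_T(X^T)$, where $X^{(1)} = X^T \cup \bigcup_C \bar C$ is the ``one-skeleton'' of the action. To establish this lemma I would filter $X$ by orbit dimension and analyze the long exact sequences of the pairs $(X^{(i)}, X^{(i-1)})$: on each higher stratum $X^{(i)}\setminus X^{(i-1)}$ the isotropy subgroups have sufficient rank that the relevant characters annihilate the corresponding relative cohomology, and freeness of $H_T(X)$ ensures the resulting short exact sequences split.

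Finally this reduces to a Mayer--Vietoris computation on $X^{(1)}$, a union of $\P^1$'s glued at the finite set $X^T$. The pairwise relative-primality of the tangent characters at each fixed point $p$ is precisely the Chinese-Remainder input needed to recognize the resulting fibre product as the GKM ring: it guarantees that values on the different $\P^1$'s through $p$ can be prescribed independently modulo their respective characters without introducing new syzygies. I expect the weight bookkeeping on higher-dimensional strata inside the Chang--Skjelbred step to be where the actual work of the proof concentrates.
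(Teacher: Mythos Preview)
The paper does not actually give a proof of this theorem at all: it is stated as a recalled result and simply referenced to \cite[Corollary 4.3]{Fulton}. So there is no ``paper's own proof'' to compare against.

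Your outline is the standard route to the GKM theorem and is essentially what one finds in the cited reference (Anderson--Fulton): injectivity from Atiyah--Bott localization together with freeness, the divisibility conditions from restricting to the invariant $\P^1$'s, and surjectivity via the Chang--Skjelbred/exact-sequence argument on the one-skeleton with the relative-primality hypothesis supplying the Chinese-Remainder step. As a sketch it is correct; the only caveat is that the paper works with integral coefficients, so in fleshing out step~(iv) you would need to be careful that the Chang--Skjelbred argument and the Mayer--Vietoris computation go through over $\Z$ rather than over a field---this is where the relative-primality hypothesis (as opposed to mere linear independence of the weights) is genuinely needed.
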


Suppose that we have smooth varieties $X,Y$ equipped with action of a torus $T$, such that the torus-fixed loci $X^T$ and $Y^T$ are finite. Fix an equivarant map $f : X \to Y$. Let $\iota_X:X^T\to X$ and $\iota_Y:Y^T\to Y$ be the inclusions of the fixed loci. Then the pullback map $f^*:H_T(Y)\to H_T(X)$ of a class $c\in H_T(Y)$ viewed from the perspective of localization principle looks as follows:
\[ (f \circ \iota_X)^*(c) = \big(\iota_Y^*(c)_{f(p)}\big)_{p \in X^T}.  \]

For every $p\in X^T$ and $q\in Y^T$
denote by $\tang^X_p$ and $\tang^Y_q$ the product of the characters acting on the tangent spaces over $p$ and over $q$ respectively.
The following proposition describes how the equivariant pushforward in cohomology along the map $f$ looks like in terms of localization.

\begin{proposition}\label{gysin}
For a class $c\in H_T(X)$,  points $p\xhookrightarrow{\iota_p} X^T$ and $f(p)\xhookrightarrow{ \iota_{f(p)}} Y^T$ the following diagram is commutative:
\[
\begin{tikzcd}
H_T(X) \arrow{r}{f_*}\arrow{d}{\iota_p^*}  & H_T(Y) \arrow{d}{\iota_{f(p)}^*}
\\%
 H_T(p)  \arrow{r}{ \cdot \frac{\tang^Y_{f(p)} }{\tang^X_p}}  &  H_T(f(p)).
\end{tikzcd}
\]
Summing over all points $p\in f^{-1}(q)$ gives a formula\[\iota_Y^*(f_*(c))_q = \sum_{\substack{p\in f^{-1}(q)}} \frac{\tang^Y_{q} }{\tang^X_p}\cdot \iota_X^*(c)_p. \]In particular, if $q \not \in f(X)$ then $\iota_Y^*(f_*(c))_q = 0$.
\end{proposition}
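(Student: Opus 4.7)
The plan is to deduce the summation formula from the Atiyah--Bott--Berline--Vergne localization decomposition; the commutative diagram then becomes the single-preimage contribution to this sum. First I would record two standard facts. For $q\in Y^T$ with inclusion $\iota_q\colon\{q\}\hookrightarrow Y$, the equivariant self-intersection formula gives $\iota_q^*(\iota_q)_*(\alpha)=\tang_q^Y\cdot\alpha$ for $\alpha\in H_T(\point)$, since the normal bundle of $\iota_q$ is $T_qY$, whose top equivariant Chern class is exactly $\tang_q^Y$. For distinct $q,q'\in Y^T$, proper base change applied to the fibre square of $\iota_q$ and $\iota_{q'}$ (whose upper-left corner is empty) gives $\iota_q^*(\iota_{q'})_*=0$.

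Next I would invoke Atiyah--Bott--Berline--Vergne: after tensoring with the fraction field $R$ of $H_T(\point)$, every class $c\in H_T(X)\otimes R$ admits the decomposition
\[
c=\sum_{p\in X^T}(\iota_p)_*\!\left(\frac{\iota_p^*(c)}{\tang_p^X}\right),
\]
which one checks by pulling back to each $p'\in X^T$ and applying the two facts above, together with the injectivity of $H_T(X)\hookrightarrow H_T(X^T)\otimes R$. This injectivity is guaranteed by freeness of $H_T(X)$ over $H_T(\point)$, one of the hypotheses appearing in Theorem \ref{localization}.

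Applying $f_*$ to the decomposition, functoriality of the proper pushforward combined with the identity $f\circ\iota_p=\iota_{f(p)}$ of maps $\{p\}\to Y$ yields
\[
f_*c=\sum_{p\in X^T}(\iota_{f(p)})_*\!\left(\frac{\iota_p^*(c)}{\tang_p^X}\right).
\]
Pulling back by $\iota_q^*$ then annihilates every term with $f(p)\neq q$ and multiplies each surviving term by $\tang_q^Y$, producing the claimed summation formula (and value $0$ when the fibre is empty). Singling out one preimage $p$ of $q=f(p)$ in this computation recovers the commutativity of the diagram.

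The main obstacle is the bookkeeping between $H_T$ and its localization at $R$: the Atiyah--Bott decomposition and the manipulations built on it only make sense in $H_T\otimes R$, because the weights $\tang_p^X$ are generally not integrally invertible, whereas the target formula is an integral identity in $H_T(q)$. Both sides of the final equation naturally sit in $H_T(q)\otimes R$, and integrality of the left-hand side (being the restriction of a class in $H_T(Y)$) forces the right-hand sum to be integral as well, so no additional descent argument is needed beyond the injectivity already used above.
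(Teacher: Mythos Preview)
The paper states this proposition without proof, treating it as a standard fact from equivariant cohomology (the reference \cite{Fulton} is the implicit source). Your derivation via the Atiyah--Bott--Berline--Vergne decomposition is the standard route and correctly establishes the summation formula, which is the only part of the proposition the paper actually uses (cf.~\eqref{eq:pushforward-localization}).

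One small caveat concerns your final sentence. The commutative square as literally drawn only commutes when $p$ is the \emph{unique} $T$-fixed preimage of $f(p)$; when $f^{-1}(q)$ contains several fixed points, the equality $\iota_{f(p)}^*f_*c = \tfrac{\tang^Y_{f(p)}}{\tang^X_p}\,\iota_p^*c$ is false for individual $p$. Your computation shows that each preimage contributes a summand of the displayed shape, and the diagram is best read as exhibiting that contribution rather than asserting a pointwise identity. So ``singling out one preimage recovers the commutativity'' overstates things slightly; it would be cleaner to say the diagram records the term indexed by $p$ in the localization sum. This is really an ambiguity in the statement rather than a gap in your argument, and it does not affect the summation formula you need.
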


\subsection{The permutohedron} \label{polyt}
Let $S_{n+1}$ be the group of all bijections of the set $[n]=\{0,1,\dots,n\}$ onto itself. For every $\sigma\in S_{n+1}$ we define the corresponding point $p_\sigma$ to be
\[p_\sigma := (\sigma^{-1}(0),\sigma^{-1}(1),\dots,\sigma^{-1}(n)) \in \R^{n+1}.\]
The \emph{permutohedron} $P_n$ is 
defined as the convex hull of the $p_\sigma$. One can easily show that it has nonempty interior in the affine hyperplane $\{v \in \R^{n+1} : \sum_{i=0}^n v_i = \frac{n (n+1)}{2}\}$, hence it is an $n$-dimensional polytope, and its vertices are exactly the $p_\sigma$. 
For every nonempty, proper subset $S \subsetneq [n]$ we have a corresponding facet $F_S$ of $P_n$, where
\[  F_S = \conv(\{v \in P_n \mid v_i \in \{0, \ldots, |S|-1\} \text{ for all } i \in S\}   ).  \]
Consider the linear functional $\ell_S : \R^{n+1} \to \R, v \mapsto \sum_{i \in S} v_i$. Since every $v \in P_n$ clearly satisfies $\ell_S(v) \geq 0 + 1 + \ldots + (|S|-1) =: c_S$, $P_n$ is contained in the closed halfspace $\{v \in \R^n : \ell_S(v) \geq c_s\}$ and the intersection of $P_n$ with the affine hyperplane $\{v \in \R^n : \ell_S(v) = c_S\}$ is exactly $F_S$.
In fact, every facet of $P_n$ arises in this way. This also determines all of the faces of $P_n$, since every face is the intersection of the facets containing it. Two vertices $p_{\sigma}, p_{\sigma'}$ of $P_n$ are connected by an edge, if and only if there is some $i \in \{0, \ldots, n-1\}$ such that $\sigma = \sigma ' \circ (i, i+1).$

\subsection{The permutohedral variety}\label{variet}
Let us consider an action of the $(n+1)$-dimensional torus $T\cong (\C^*)^{n+1}$ on a vector space $V\cong \C^{n+1}$ given in coordinates by $$(t_0,t_1,\dots,t_n)\cdot (x_0,x_1,\dots,x_n)=(t_0x_0,t_1x_1,\dots,t_nx_n)$$ for every $(t_i)_i\in T$ and $(x_i)_i\in V$. The above action induces a natural action of $T$ on $\bigwedge\nolimits^k \C^{n+1}$ as well as on $\P(\bigwedge\nolimits^k \C^{n+1})\cong \P^{\binom{n+1}{k}-1}$, where $k$ is any positive integer.
Let $\binom{[n]}{k}$ denote the set of all $k$-element subsets of $[n]$. If $e_0, \ldots, e_n$ is the standard basis of $\C^{n+1}$, the vectors $(\bigwedge_{i \in S} e_i)_{S  \in \binom{[n]}{k}}$ form a basis of $\bigwedge^k \C^{n+1}$, which is why we use coordinates $p = [p_S]_{S \in \binom{[n]}{k}}$ or often just $p = [p_S]_{S}$ for a point $p \in \P^{\binom{n+1}{k} - 1}$.

For all $k = 1, \ldots, n$ we have an embedding
\begin{equation}\label{phi_i}
\phi_k:T \to \P^{\binom{n+1}{k}-1}, (t_i)_i \mapsto \big[\prod_{i \in S} t_i\big]_S.    
\end{equation}

Let $I_k$ be the ideal of $\C[x_0,\dots,x_n]$ generated by all monomials of degree $k$. We define the map 
\begin{equation}    \label{phi}
\phi:T \to \prod_{k=1}^n \P^{\binom{n+1}{k}-1}
\end{equation}
as the product of the maps $\phi_k$.
The $n$-dimensional \emph{permutohedral variety} $\Pi_n$ is defined as the closure of the graph of the map $\phi$. Since the graph of $\phi$ is constructible, Zariski- and Euclidean closure coincide.
Using the Segre embedding $\psi:\prod_{k=1}^n \P^{\binom{n+1}{k}-1}\to \P^{\prod_{k=1}^n\binom{n+1}{k}-1}$ one can also view the above construction as a blowup of $\P^n$ 
along the closed scheme given by the homogeneous ideal $I:=\prod_{k=2}^n I_k$. One can also realize this blowup as a sequence of consecutive blowups: starting from $I_2$ and ending at $I_n$, which corresponds to firstly blowing up the points that have all but one coordinate nonzero, secondly blowing up the strict transforms of the projective lines given by the vanishing all but two coordinates, and so on. Note that at every step the strict transforms will be disjoint, because the previous blowup was along an intersection. In particular every consecutive blowup will be smooth and toric, since we always blow up along smooth, torus invariant loci. It can be shown that $\Pi_n$ is a resolution of singularities of the graph of the \emph{Cremona transformation}
\[
\P(\C^{n+1})\dashrightarrow \P(\bigwedge\nolimits^n \C^{n+1}),
\]
which simply inverts homogeneous coordinates. Note that this rational map is $T$-equivariant.
If we consider the map 
\begin{equation}\label{big}
\psi \circ \phi : T \to \P^{\prod_{k=1}^n\binom{n+1}{k}-1},
\end{equation}
one can easily see that the permutohedron $P_n$ is the polytope corresponding to the permutohedral variety, since $P_n$ is the Minkowski-sum of the hypersimplices $\Delta_k := \conv(\sum_{i \in S} e_i : S \subseteq [n], |S| = k)$, and the Minkowski-sum of polytopes corresponds exactly to the Segre-embedding of varieties.

\subsection{Torus orbits and the Cohomology Ring of $\Pi_n$}\label{coho}

So far we have worked with $n+1$ dimensional torus $T \cong (\C^*)^{n+1}$ acting on $\Pi_n$. From the perspective of toric varieties, it is also natural to consider the embedded torus $i:T'\to \P(\C^{n+1})$ which is simply the quotient of $T$ by the one-dimensional isotropy subtorus scaling all coordinates of $\C^{n+1}$ simultaneously.   
Note that the map $p:T\to T'$ is surjective. In particular orbits of the action of $T$ are orbits of the action of $T'$ and we can use toric geometry to easily describe the latter.
For any projective toric variety that is given by a polytope, the torus fixed points correspond to the vertices, and one-dimensional orbits correspond to the edges of the polytope. It is thus convenient to index fixed points of $\Pi_n$ by permutations.

\begin{remark}\label{limit}
A torus-fixed point $q_\sigma$ corresponding to a vertex $p_\sigma$ can be obtained as a limit of a special monomial curve $c^\sigma$ defined on $T'$. Firstly, consider the curve 
\[c_1^\sigma:\C^*\to T, \ t\to(t^{\sigma^{-1}(0)},t^{\sigma^{-1}(1)},\dots,t^{\sigma^{-1}(n)})).\]
Composing with the map $i\circ p$ yields a curve $c^\sigma$ in the permutohedral variety $\Pi_n$. Identifying $\Pi_n$ with the image of its embedding coming from the polytope $P_n$ (cf. \eqref{big}), we see that $\lim_{|t| \to \infty} c^\sigma(t) = [0:\dots:0:1:0:\dots:0] = q_\sigma$, where the only nonzero coordinate is the one indexed by the vertex $p_\sigma$.
\end{remark}

In a similar manner one can show that the one-dimensional orbits are tori given by the nonvanishing of coordinates $p_\sigma$ and $p_{\sigma\circ (i,i+1)}$ and compute that $T$ acts on this orbit with the character $t_{\sigma(i)}\cdot t^{-1}_{\sigma(i+1)}$.
Combining the above data with Theorem \ref{localization} we arrive at the following result, which is Theorem 2.1 in \cite{tautological-classes}:
\begin{proposition}\label{H(Pi)}
The equivariant cohomology ring $H_T(\Pi_n)$ of $\Pi_n$ can be viewed as a subring of \[ H_T(\Pi_n^T)=\prod_{\sigma\in S_{n+1}}\Z[t_0,t_1,\dots,t_n]\] via restriction to the fixed points. An element $(f_\sigma)_{\sigma \in S_{n+1}} \in H_T(\Pi_n^T)$ belongs to the image of this map, if and only if $f_\sigma-f_{\sigma\circ (i,i+1)}=0 \pmod{\ t_{\sigma(i)}-t_{\sigma(i+1)}}$ holds for all $\sigma \in S_{n+1}$ and $i \in \{0, \ldots, n\}$.
\end{proposition}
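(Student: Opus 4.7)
The plan is to apply the Goresky--Kottwitz--MacPherson theorem (Theorem \ref{localization}) to $X = \Pi_n$. For this I need to verify that (i) $\Pi_n$ is nonsingular with finite $T$-fixed locus, (ii) $H_T(\Pi_n)$ is free over $H_T(\point)$, and (iii) the tangent characters at every fixed point are pairwise relatively prime in the sense of Definition \ref{coprime}. Once these are in hand, the description of the image will follow by reading off the characters acting on the $1$-dimensional orbits.

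For (i), smoothness of $\Pi_n$ is clear from the description in Section \ref{variet} as an iterated blow-up of $\P^n$ along smooth, torus-invariant centers. The $T$-action factors through the quotient torus $T'$, for which $\Pi_n$ is toric with moment polytope $P_n$, so the $T$-fixed points are exactly the vertices $p_\sigma$ indexed by $\sigma \in S_{n+1}$, giving a finite set of size $(n+1)!$. For (ii), a Bia\l ynicki--Birula decomposition of the smooth projective toric variety $\Pi_n$ furnishes a filtration by affine cells concentrated in even real degree, which by standard arguments forces $H_T(\Pi_n)$ to be a free $H_T(\point)$-module of rank equal to the number of fixed points. For (iii), the toric picture tells us that every $1$-dimensional orbit is an edge of $P_n$ connecting $p_\sigma$ with $p_{\sigma \circ (i,i+1)}$ for a unique $i \in \{0,\ldots,n-1\}$, with $T$ acting by the character $t_{\sigma(i)} - t_{\sigma(i+1)}$ (as recorded just before the proposition). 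The $n$ tangent characters at $p_\sigma$ are therefore $\{t_{\sigma(i)} - t_{\sigma(i+1)} : 0 \leq i \leq n-1\}$; these are pairwise distinct primitive characters of $T$, hence pairwise relatively prime.

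With all hypotheses verified, Theorem \ref{localization} gives at once the claimed description: $H_T(\Pi_n)$ embeds into $\prod_{\sigma \in S_{n+1}} \Z[t_0, \ldots, t_n]$ via restriction to fixed points, and a tuple $(f_\sigma)_\sigma$ lies in the image exactly when $f_\sigma \equiv f_{\sigma \circ (i,i+1)} \pmod{t_{\sigma(i)} - t_{\sigma(i+1)}}$ for all $\sigma$ and all relevant $i$. The substantive content is really collected in the preparatory discussion of Sections \ref{polyt}--\ref{coho}, namely the identification of fixed points, edges, and the characters acting on the corresponding orbits; the main technical point in a self-contained write-up would be the verification of freeness of $H_T(\Pi_n)$ over $H_T(\point)$, but this is a standard feature of smooth projective toric varieties and so does not present a real obstacle.
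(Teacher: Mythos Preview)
Your proposal is correct and follows essentially the same approach as the paper, which simply remarks ``Combining the above data with Theorem~\ref{localization} we arrive at the following result'' immediately before stating the proposition. You are merely spelling out in more detail the verification of the hypotheses of the GKM theorem (smoothness, finiteness of the fixed locus, freeness over $H_T(\point)$, and pairwise relative primeness of the tangent characters), all of which the paper leaves implicit in the surrounding discussion of Sections~\ref{polyt}--\ref{coho}.
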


\subsection{Equivariant cohomology of $\P^n\times \P^n$ and the map $\pi^n:\Pi_n\to \P^n\times \P^n$} \label{pxp}

Let us consider again the $(n+1)$-dimensional torus $T = (\C^*)^{n+1}$ that acts on $\C^{n+1}$ by scaling each coordinate, as in Section \ref{variet}. The space $\bigwedge^n \C^{n+1}$ can %(naturally?)
be identified with the dual vector space of $\C^{n+1}$. We thus fix the convention that $t = (t_0, \ldots, t_n) \in T$ acts on $\bigwedge^n \C^{n+1}$ by scaling the $i$-th basis vector $\bigwedge_{j \neq i} e_j$ with $t_i^{-1}$, where the $(e_1,\dots,e_n)$ is the standard basis of $\C^{n+1}$.
Proposition $6.1$ from \cite[§2]{Fulton} allows us to describe the equivariant cohomology ring of $\P(\C^{n+1})$: 
\[
H_{T}(\P(\C^{n+1}))=\frac{\Z[t_0,t_1,\dots,t_n][\alpha]}{\big(\prod_{i=0}^n (\alpha+t_i)\big)}.
\]
Here the variables $t_i$ correspond to the equivariant first Chern class of the trivial bundle over $\P(\C^{n+1})$, with action given by rescaling the $i$-th coordinate by $t_i$,
%scaling of $x_i$, %Sure?
and $\alpha$ is the equivariant first Chern class of the dual of the equivariant tautological bundle. Similarly, because we fixed the convention of the action of $T$ on $\bigwedge \nolimits^n \C^{n+1}$, we get the natural identification 
 \[
 H_T(\P(\bigwedge\nolimits^n \C^{n+1}))=\frac{\Z[t_0,t_1,\dots,t_n][\beta]}{\big(\prod_{i=0}^n (\beta-t_i)\big)}.
 \]  
Here $\beta$ is again the equivariant first Chern class of the dual tautological bundle over $\P(\bigwedge\nolimits^n\C^{n+1})$.
The product $\P(\C^{n+1})\times \P(\bigwedge\nolimits^n \C^{n+1})$ is one of the main varieties we will discuss in this article. By abuse of notation we will often just write $\P^n \times \P^n$. The Kunneth formula yields that
\begin{equation}
\label{eq:middle-guy-cohomology}
H_{T}\left(\P^n\times \P^n) \right)=\frac{\Z[t_0,t_1,\dots,t_n][\alpha,\beta]}{\big(\prod_{i=0}^n (\alpha+t_i),\prod_{i=0}^n(\beta-t_i)\big)}.
\end{equation}
Every element of the equivariant cohomology ring can be identified with a polynomial $F$ in $\Z[t_0,t_1,\dots,t_n][\alpha,\beta]$ of degree less than $n+1$ with respect to both $\alpha$ and $\beta$.

 Note that for every two torus fixed points $a\in \P(\C^{n+1})$ and $b\in \P(\bigwedge\nolimits^n \C^{n+1})$ we have a fixed point $(a,b)\in\P(\C^{n+1})\times \P(\bigwedge\nolimits^n  \C^{n+1})$ in the product, and every fixed point arises in such way. 
We now show how to pass from the description \eqref{eq:middle-guy-cohomology} to the description via localization.
\begin{proposition}
The restriction map $H_T(\P^n\times \P^n)\to H_T([\P^n\times \P^n]^T)$ is given by
\begin{align*}
    \frac{\Z[t_0,t_1,\dots,t_n][\alpha,\beta]}{\left(\prod_{i=0}^n (\alpha+t_i),\prod_{i=0}^n(\beta-t_i)\right)}\to \prod_{0\leq i,j\leq n} \Z [t_0,\dots,t_n],
\end{align*}
where each $t_k$ is mapped to $t_k$, $\alpha$ is mapped to $(-t_i)_{i,j}$ and $\beta$ is mapped to $(t_j)_{i,j}$.
\end{proposition}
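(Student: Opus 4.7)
The plan is to use the fundamental fact from equivariant intersection theory that for an equivariant line bundle $L$ on a smooth $T$-variety $X$ and a fixed point $p \in X^T$, the restriction $\iota_p^*(c_1^T(L)) \in H_T(\point) = \Z[t_0,\ldots,t_n]$ equals the character through which $T$ acts on the one-dimensional fiber $L_p$. Combined with the K\"unneth isomorphism and the explicit list of fixed points of $\P^n \times \P^n$, this pins down where $\alpha$, $\beta$ and the $t_k$ go under restriction, and it immediately yields the claimed formulas.

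First I would identify the fixed loci. The fixed points of $\P(\C^{n+1})$ under the coordinate-scaling $T$-action are the coordinate points $[e_0],\ldots,[e_n]$, and the fiber of the tautological bundle $\O(-1)$ over $[e_i]$ is the line $\C\cdot e_i$, on which $T$ acts through the character $t_i$. Hence $c_1^T(\O(-1))|_{[e_i]} = t_i$, and dually $\alpha = c_1^T(\O(1))$ restricts to $-t_i$ at $[e_i]$. This matches the $(\alpha+t_i)$-relation already present in $H_T(\P(\C^{n+1}))$.

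Next I would run the parallel argument on the second factor, being careful about the convention fixed at the start of Section~\ref{pxp}: the element $t = (t_0,\ldots,t_n)$ acts on $\bigwedge_{k \neq j} e_k$ by $t_j^{-1}$. Thus the fixed points of $\P(\bigwedge^n \C^{n+1})$ are the points $[\bigwedge_{k \neq j} e_k]$ for $j=0,\ldots,n$, and at such a fixed point the tautological bundle has character $t_j^{-1}$; consequently $\beta$, being $c_1^T$ of its dual, restricts to $t_j$. This again matches the relation $(\beta-t_j)$ defining $H_T(\P(\bigwedge^n \C^{n+1}))$.

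Finally I would assemble the product: by K\"unneth, $(\P^n \times \P^n)^T$ consists of all pairs $\bigl([e_i], [\bigwedge_{k \neq j} e_k]\bigr)$ for $0 \le i,j \le n$, and the classes $\alpha,\beta$ are pulled back from the two factors. Since restriction to a fixed point of a product commutes with pullback from either factor, $\alpha$ restricts to $-t_i$ at $(i,j)$ (independent of $j$) and $\beta$ restricts to $t_j$ (independent of $i$); the variables $t_k$, coming from $H_T(\point)$ via the structural map, restrict to themselves at every fixed point. The only genuine subtlety is bookkeeping of the sign and the dualization conventions --- in particular that the asymmetric appearance of $-t_i$ versus $+t_j$ is forced by the chosen $T$-action on $\bigwedge^n \C^{n+1}$ --- and once this is tracked carefully the statement follows with no further computation.
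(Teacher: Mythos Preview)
Your argument is correct and is essentially the same as the paper's: both compute the characters with which $T$ acts on the fibers of the tautological bundles $\O(-1)$ over the coordinate fixed points, and then dualize to obtain the restrictions of $\alpha$ and $\beta$. The paper condenses this into a single sentence, whereas you spell out the K\"unneth step and the sign bookkeeping more explicitly, but there is no substantive difference in approach.
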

{\parindent0mm Indeed, the torus acts on the product of tautological bundles $\mathcal{O}_{\P(\C^{n+1})}(-1)\times \mathcal{O}_{\P(\bigwedge\nolimits^n  \C^{n+1})}(-1)$ over fixed point $(i,j)$ with characters $t_i$ and $-t_j$ respectively.}

For $n>1$ we define 
\begin{equation} \label{MAP}
\pi^n:\Pi_n\to \P(\C^{n+1})\times \P(\bigwedge\nolimits^n  \C^{n+1})
\end{equation}
to be the composition of the closed embedding $\Pi_n\to \prod_{k=1}^n\P(\bigwedge\nolimits^k  \C^{n+1}))$ with the natural projection to the first and the last factors. For $n=1$ we see that $\Pi_1\cong \P^1$ and we define $\pi^1:\Pi_1\to \P^1\times \P^1$ to be the identity on the first factor and the Cremona transformation on the second (note that the Cremona transformation is an automorphism of $\P^1$ and in particular a regular map).
In the case $n=0$ we define $\Pi_0$ to be a point and formally $\pi^0:\Pi_0\to \P^0\times \P^0$ is the identity.

\begin{remark}\label{indexing}
$T$-fixed points of $\Pi_n$ are mapped to $T$-fixed points of $\P^n\times \P^n$.
While the fixed points of $\Pi_n$ are indexed by permutations, the fixed points of $\P^n\times \P^n$ can be indexed by $[n]\times [n]$ in a natural way.
Note that the image of $\phi,\phi_1$ and $\phi_n$ is dense in respectively $\Pi_n,\P(\C^{n+1})$ and $\P(\bigwedge\nolimits^n \C^{n+1})$. In particular, fixed points can be described as limits of monomial curves as in Remark \ref{limit}. 
Composing a curve $c_1^\sigma$ (as in Remark \ref{limit}) with $\pi^n\circ \phi = \phi_1\times \phi_n$, and looking at the coordinate with the largest exponent, we see that the torus-fixed point $q_\sigma$ gets mapped to $(\sigma(n),\sigma(0))$ by $\pi^n$.
\end{remark}

\subsection{Matroids and their tautological (Chern) classes}

A \emph{matroid} $M$ is a pair $M=(E,B)$, where $E$ is a finite set and $B$ is a non-empty collection of subsets of $E$, called the \emph{bases} of $M$. The bases of $M$ are required to satisfy the \emph{base exchange property}, which is inspired by the Steinitz exchange lemma for vector spaces: If $B_1, B_2 \in B$ are two bases and if $v \in B_1 \setminus B_2$, then there exists some $w \in B_2 \setminus B_1$ such that $(B_1 \setminus \{v\}) \cup \{w\} \in B$ is a basis. Matroids generalize the concept of linear dependence in a vector space. For a comprehensive introduction to matroid theory we recommend \cite{matroid theory}.

When a subset $S\subseteq E$ is considered, we denote its \emph{rank} as $\rank_M(S)$ and its \emph{nullity} as $\nl_M(S) := |S|-\rank_M(S)$. We also write $\rank(M):=\rank_M(E)$ and $\corank(M):=\nullity_M(E)$ for the rank of $M$ and its dual respectively. We further say that an element $e\in E$ is $\emph{general}$ (in $M$), if it is neither a loop nor a coloop in $M$. By abuse of notation, we will sometimes write $\emptyset$ for the unique matroid with groundset $E=\emptyset$.

For this article the most important invariant of a matroid will be its \emph{Tutte  polynomial} $T_M(x,y)$, which was defined in \cite{tutte and applications} as
\[
T_M(x,y)=\sum_{S\subset E} (x-1)^{\rk(M)-\rank_M(S)}\cdot (y-1)^{\nl_M(S)}\in \Z[x,y].
\] This invariant satisfies the following deletion-contraction relation for every matroid $M=(E,B)$ and each element $e\in E$:
\[
T_M(x,y) = \begin{cases}
            T_{M\delete e}(x,y)+T_{M\contract e}(x,y), & \text{if $ e$ is a general element in $M$,}\\
            yT_{M\delete e}(x,y),& \text{if $e$ is a loop in $M$,}\\
            xT_{M\contract e}(x,y),& \text{if $e$ is a coloop in $M$.}
            \end{cases}
\]
Together with the base case $T_\emptyset(x,y)=1$ this also uniquely determines $T_M(x,y)$ for every matroid.\\

In \cite{tautological-classes}, the authors assign classes in the equivariant cohomology ring of the permutohedral variety to a matroid $M$ with groundset $[n]:=\{0,\ldots,n\}$. To do so they consider the \emph{lex-first-basis} of $M$ associated to a permutation $\sigma\in S_{n+1}$, which can be constructed as follows.
First order the elements of the groundset according to $\sigma$:
\[
(\sigma(0),\sigma(1),\dots,\sigma(n)).
\]
Set $I_{-1}=\emptyset$ and traverse the above list from left to right in $n+1$ steps. For $k=0,\ldots,n$ set $I_k:=I_{k-1}\cup \{\sigma(k)\}$ if $I_{k-1}\cup \{\sigma(k)\}$ is independent and set $I_k:=I_{k-1}$ otherwise. Then $I_{n}$ will be a basis of $M$ and varying $\sigma$ every basis of $M$ will arise in this way. Equivalently $I_{n}$ is the first basis that appears if one orders all bases of $M$ lexicographically according to $\sigma$.

\begin{definition}[lex-first-basis]\label{lex} 
    The basis $I_{n}$ in the above is called the $\emph{lex-first-basis}$ of $M$ associated to the permutation $\sigma$ and is denoted by $B_\sigma$. 
\end{definition}
    We illustrate the construction lex-first-bases on the following example.
\begin{example}
 Let $M$ be the matroid on the groundset $E=\{0,1,2,3\}$ where every subset of $E$ is independent except for the circuit $\{0,1,2\}$ and the whole set $E$. This matroid can be represented by the vectors $\{e_0,e_1,e_0+e_1,e_2\}\subseteq \C^3$.
  For $\sigma=\text{id}$ being the trivial permutation we get $I_0=\{0\}$, $I_1=\{0,1\}=I_2$ and $I_3=\{0,1,3\}$. On the other hand for a permutation $\sigma$ defined by 
\[
(\sigma(0),\sigma(1),\sigma(2),\sigma(3))=(2,1,0,3)
\]
we get that $I_0=\{2\}$, $I_1=\{1,2\}=I_2$ and $I_3=\{1,2,3\}$.

\end{example}

For any finite set $E$, we define
\[
    \Z[E] := \Z[t_e \mid e\in E]
\]
to be the polynomial ring over $\Z$ with free variables indexed elements of $E$. For $E'\subseteq E$ we will consider $\Z[E']$ as a subring of $\Z[E]$.

\begin{definition}[tautological classes of matroids]
\label{def_tautological_classes}
Let us fix a matroid $M$ on ground set $E=[n]$.  We define the \emph{$i$-th  tautological sub Chern class} of the matroid $M$ as $c_i([\Sbundle_M])\in \prod_{\sigma\in S_{n+1}}\Z[E]$ with
\begin{equation}\label{sbundle}
c_i([\Sbundle_M])_\sigma := 
\elem_i(\{-t_e\}_{e\in B_\sigma}),
\end{equation}
where $\elem_j$ stands for the $j$-th elementary symmetric polynomial.

Similarly, we define the \emph{$i$-th tautological quotient Chern class} $c_i([\Qbundle_M])$ through
\begin{equation}\label{qbundle}
c_i([\Qbundle_M])_\sigma:=  \elem_i(\{-t_e\}_{e\notin B_\sigma}).
\end{equation}
\end{definition}
Using Theorem \ref{localization} one can check that the class $c_i([\Sbundle_M])$ lies in the image of the restriction $H_T(\Pi_n)\to H_T(\Pi_n^T)$. In Section~\ref{subsec:representable}, we supply a concrete geometrical meaning of the tautological classes for representable matroids: this is the first equivariant Chern class of the bundle $\Sbundle$ in the Lemma \ref{subbundle}.

We will also work with $i$-th Chern classes of the duals of the above which are defined simply by changing $-t_e$ to $t_e$ in all polynomials and geometrically indeed corresponds to taking the dual. For example:

\[
c_i([\Sbundle_M^\vee]):=\elem_i(\{t_e\}_{e\in B_\sigma})_\sigma.
\]
\begin{remark}
    The experienced reader might suspect from our notation the existence of $K$-classes $[\Sbundle_M],[\Qbundle_M]$. These classes are constructed in \cite{tautological-classes} and are represented by vector bundles when $M$ is realizable over $\C$. Since in the present article we only deal with the different Chern classes, which can be understood combinatorially, we refrain from mentioning the $K$-theoretic side. We also point out the article \cite{GKM24}, which is similar in spirit to our work since it concerns the enrichment of non-equivariant computations with the natural torus action on the permutohedron. 
\end{remark}

To keep track of all classes at the same time, we will consider the following graded classes.
\begin{definition}\label{weighted}
    We define the \emph{graded total Chern sub-class}  $c([\Sbundle_M],x)\in H_T(\Pi_n)[x]$ of a matroid $M$ to be
    \[
    c([\Sbundle_M],x):=\sum_{j=0}^{\rank(M)} c_j([\Sbundle_M]) x^j.
    \]
    Similarly we have the  \emph{graded total Chern quotient-class} \ $c([\Qbundle_M],y)\in H_T(\Pi_n)[y]$ of a matroid which is 
    \[
    c([\Qbundle_M],y):=\sum_{j=0}^{\crk(M)} c_j([\Qbundle_M]) y^j.
    \]
\end{definition}

\subsection{The representable case}
\label{subsec:representable}

A matroid  $M$ on a groundset $[n]$ is called \emph{representable} over a field $\K$ if there exists a set of vectors $\{v_0,\dots,v_n\}$ inside some vector space over $\K$ such that $\{v_i\}_{i\in I}$ is independent if and only if $I$ is independent in the matroid $M$.

We now describe how one can associate a geometric structure to a matroid $M$. At first suppose that $M$ is representable by a set of vectors $\{\Bar{e}_0,\dots, \Bar{e}_n\}$ in a quotient space $\C^{n+1} / L$ for some $r$-dimensional space $L \subset \C^{n+1}$.
To the subspace $L$ of $\C^{n+1}$ we associate an equivariant bundle $\Sbundle_L$.
This bundle is a subbundle of an equivariant trivial bundle isomorphic to $\Pi_n\times \C^{n+1}$ with $t_i\in T_{n+1}$ acting on $\C^{n+1}$ by scaling $e_i$ by $t_i^{-1}$. As seen in Section \ref{variet}, we have a morphism $\Phi : T \to T/\C^* \hookrightarrow \Pi_n$ of the $(n+1)$-dimensional torus to the permutohedral variety. We start by defining $\Sbundle_L$ to be $L\subset \C^{n+1}$ over $\Phi(1: \ldots : 1),$ and use the torus action of $T$ on $L$ to extend it to the whole image of $\Phi$, by putting the vector space $t \cdot L := \{(t_0^{-1}v_0,\ldots,t_n^{-1}v_n) \mid v\in L\}$ over the point $\Phi(t)$. Note that if $t, t' \in T$ differ by a constant factor $c \in C^*$, then $t\cdot L = t' \cdot L$, so this is well-defined. 
Since $\Phi(T)$ is dense in $\Pi_n$, for arbitrary $p \in \Pi_n$ there exists a sequence $(t^{(m)})_m \subset T$ such that $t_m \to p$. We get a corresponding sequence $(t^{(m)}\cdot L)$ in the Grassmannian $\Gr(r, \C^{n+1})$. It is a remarkable property of the permutohedral variety that this limit does not depend on the choice of the sequence $(t^{(m)})_m$, see \cite[Lemma 3.5]{tautological-classes}. 
By this we get a vector bundle $\Sbundle_L$ over the permutohedral variety.  
By doing this for the quotient space $\C^{n+1} / L$ instead of $L$, we get a vector bundle $\Qbundle_L$, which is also the dual bundle of $\Sbundle_L$.

In the following lemma we describe the characters acting on the bundle $\Sbundle_L$ over the fixed points. In other words we have to determine what subspace of $\C^{n+1}$ corresponds to the fiber of $\Sbundle_L$ over a fixed point and read off the characters of the torus action on this space.

\begin{lemma}\label{subbundle}
In description of $H_T(\Pi_n)$ via localization as in Proposition \ref{H(Pi)},
the first equivariant Chern class $c_1(\Sbundle_L)$ is equal to $(-\sum_{i\in B_\sigma}t_i)_\sigma$. In other words, if $L_\sigma$ is the fiber of $\Sbundle_L$ over $q_\sigma$, then 
\[
L_\sigma=\Span(e_i \mid i\in B_\sigma ).
\]
\end{lemma}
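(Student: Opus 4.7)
The plan is to describe the fiber $L_\sigma$ of $\Sbundle_L$ over the torus-fixed point $q_\sigma\in\Pi_n$ as an explicit coordinate subspace; the first Chern class claim is then immediate by reading off the torus weights on each one-dimensional summand.

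By Remark \ref{limit}, $q_\sigma = \lim_{|t|\to\infty} c^\sigma(t)$, where the underlying one-parameter subgroup is $c_1^\sigma(t) = (t^{\sigma^{-1}(0)},\ldots,t^{\sigma^{-1}(n)})\in T$. Since $\Sbundle_L$ is obtained by translating $L$ via the $T$-action and then extending continuously to all of $\Pi_n$ (with the cited independence from the approach), I would take
\[
L_\sigma = \lim_{|t|\to\infty}\,c_1^\sigma(t)\cdot L \quad \text{in } \Gr(\dim L,\C^{n+1}).
\]
Under the stated convention, a torus element $(t_0,\ldots,t_n)$ scales $e_i$ by $t_i^{-1}$, so $c_1^\sigma(t)$ rescales $e_{\sigma(k)}$ by $t^{-k}$.

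The main computation is Gaussian elimination in the reordered basis $\tilde e_k:=e_{\sigma(k)}$. I would choose a basis $\ell_1,\ldots,\ell_r$ of $L$ in reduced row echelon form with respect to $(\tilde e_0,\ldots,\tilde e_n)$, with pivot positions $k_1<\cdots<k_r$, so that
\[
\ell_j \;=\; \tilde e_{k_j} \;+\; \sum_{k>k_j,\;k\notin\{k_1,\ldots,k_r\}} c_{jk}\,\tilde e_k.
\]
Applying $c_1^\sigma(t)$ and then rescaling $\ell_j$ by $t^{k_j}$ (which does not change the spanned subspace) yields
\[
t^{k_j}\,c_1^\sigma(t)\cdot \ell_j \;=\; \tilde e_{k_j} \;+\; \sum_{k>k_j} c_{jk}\,t^{k_j-k}\,\tilde e_k \;\xrightarrow{|t|\to\infty}\; \tilde e_{k_j},
\]
since every exponent $k_j-k$ is strictly negative. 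Hence $L_\sigma = \Span\bigl(e_{\sigma(k_j)} : j=1,\ldots,r\bigr)$.

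It remains to identify $\{\sigma(k_1),\ldots,\sigma(k_r)\}$ with $B_\sigma$. By the definition of RREF with columns read in the order $\sigma(0),\sigma(1),\ldots$, an index $k$ is a pivot exactly when the first $k+1$ columns (in $\sigma$-order) have rank one greater than the first $k$, i.e.\ exactly when $\sigma(k)$ is independent from the previously added elements; this is precisely the greedy rule of Definition \ref{lex}. Thus $L_\sigma=\Span(e_i : i\in B_\sigma)$, and since $T$ acts on the summand $\C\cdot e_i$ with character $-t_i$, the equivariant Chern roots of $\Sbundle_L$ at $q_\sigma$ are $\{-t_i : i\in B_\sigma\}$, giving $c_1(\Sbundle_L)|_{q_\sigma}=-\sum_{i\in B_\sigma}t_i$ as claimed. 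The main subtlety to handle is that the limit must be taken in the Grassmannian rather than vector-by-vector: individually, the vectors $c_1^\sigma(t)\cdot \ell_j$ may tend to zero, so one must absorb the ``slowest vanishing'' scale $t^{k_j}$ to extract a well-defined limit spanning $L_\sigma$. Independence of this limit from the choice of RREF basis is precisely the property of $\Pi_n$ invoked in the construction of $\Sbundle_L$.
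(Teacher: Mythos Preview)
Your proof is correct and follows essentially the same approach as the paper: approach $q_\sigma$ via the one-parameter subgroup of Remark~\ref{limit}, put a basis of $L$ into reduced row echelon form with respect to the $\sigma$-ordered coordinates, and observe that each row degenerates to its pivot coordinate vector. Your write-up is in fact more careful than the paper's sketch, explicitly handling the rescaling needed for the Grassmannian limit and spelling out why the pivot set coincides with $B_\sigma$ via the greedy rule.
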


\noindent We will only sketch a proof here. For a formal proof we refer to \cite[3.7]{tautological-classes}.

We look at a torus fixed point $q_\sigma$ corresponding to a permutation $\sigma \in S_{n+1}$.
Consider the ordered basis $e_{\sigma(0)},e_{\sigma(1)},\ldots, e_{\sigma(n)}$ of $\C^{n+1}$. 
Recall that we can reach $q_\sigma$ as a limit of a curve $c_\sigma$ as described in Remark \ref{limit}. Note that the curve is connecting the point $\Phi(1:\ldots:1)$ with $q_\sigma$ and we know the fiber over $\Phi(1: \ldots : 1)$. All we have to do is to act on the fiber along the segment joining the two points and see what we get in the limit. Namely on $e_{\sigma(i)}$ we act with $t^{-\sigma^{-1}(\sigma(i))}=t^{-i}$. 

Write the spanning vectors of $L$ in the rows of a matrix and take the reduced row echelon form of the matrix. By definition $B_\sigma=:\{i_1,\ldots,i_r\}$ is the set of indices of columns that contain a leading 1 in the reduced row echelon form. 
Consider the $k$-th row of our matrix, where $k \leq r=\dim(L)$: 
\[
(0,\ldots,0,1,*,\ldots,*)
\]
where $1$ appears on position $i_k$. 
Acting on the span of this vector and taking $t\to \infty$ we obtain $\Span(e_{i_k})$, so $e_{i_k}\in L_\sigma$. Performing the above for all $k$ we see that the subspace $L$ degenerates to the subspace spanned by $(e_{i_1},e_{i_2},...,e_{i_r})$.

\section{Pushforward of graded total chern classes}\label{sec:push}

We consider the result in this section as an equivariant analog of \cite[Theorem A]{tautological-classes}.
We will establish a closed formula for the pushforward of product of $c([\Sbundle_M^\vee],z)c([\Qbundle_M],w)$ along the map $\pi^n: X_E \to \P^n\times\P^n$ (see formula \eqref{MAP} in Section \ref{pxp} for the definition of $\pi^n$ and Definition \ref{weighted} for graded Chern classes). 
Adapting the approach of \cite[§4]{tautological-classes}, we will accomplish this by proving analogous deletion-contraction relations for both the pushforward and the desired closed formula, allowing for a straightforward inductive proof. Relating to notation from Proposition \ref{gysin} we will abbreviate $\tang_{\sigma}$ for $\tang_{q_\sigma}$ for a fixed point $q_\sigma\in \Pi_n$. Similarly we set $\tang_{a,b}$ to be  the product of the characters acting on the tangent space over a point in $\P(\C^{n+1})\times \P(\bigwedge\nolimits^n \C^{n+1})$ indexed by $(a,b)\in [n]\times [n]$ (see Remark \ref{indexing} for more about indexing).

\begin{theorem}\label{thrm:pushforward-graded-chern}
    Let $M$ be a matroid on a ground set $E$ of cardinality $n+1$, where $n\geq0$%,
    %and let $\pi^n:X_E\to \P^n\times\P^n$ be [TBD]
    . Then for any $a,b\in E$, we have
    \[
        \pi^n_*\Big(c([\Sbundle_M^\vee],z)c([\Qbundle_M],w)\Big)_{a,b} = F_M(-t_a,t_b,z,w),
    \]
    where $F_M\in \Z[E][\alpha,\beta,z,w]$ is a polynomial given by
    \begin{align*}
        F_M(\alpha,\beta,z,w) &:= \frac{1}{\alpha+\beta} \sum_{S\subset{E}} (1-\alpha z)^{\rank(M)-\rank_M(S)}(1+\alpha w)^{\crk(M)-\nullity_M(S)}(1+\beta  z)^{\rank_M(S)}\cdot{}\\&\hskip6cm{}\cdot(1-\beta w)^{\nullity_M(S)}\prod_{e\in S} (\alpha+t_e)\prod_{e\notin S}(\beta-t_e).
    \end{align*}
\end{theorem}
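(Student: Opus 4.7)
The plan is to prove the identity by induction on $n = |E| - 1$, mirroring the deletion-contraction approach of \cite[\S 4]{tautological-classes}. The base case $n = 0$ (where $M$ is either a loop or a coloop and $\Pi_0 = \point$) reduces to a direct comparison of the two Chern classes against the sum defining $F_M$, using the explicit description of $\pi^0$. For the inductive step with $|E| \geq 2$, I would fix an auxiliary element $e \in E$, taken distinct from $a$ and $b$ whenever possible, and establish matching deletion-contraction relations for both sides of the proposed identity.

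On the formula side this is a purely algebraic manipulation: splitting the sum $\sum_{S\subseteq E}$ into $\{S : e \in S\}$ and $\{S : e \notin S\}$, and using the standard identities $\rank_{M\delete e}(S') = \rank_M(S')$, $\rank_{M\contract e}(S') = \rank_M(S' \cup e) - \rank_M(\{e\})$ together with their induced counterparts for nullity and corank, decomposes $F_M$ as an explicit $t_e$-weighted combination of $F_{M\delete e}$ and $F_{M\contract e}$ whose precise shape depends on whether $e$ is a loop, a coloop, or a general element. On the geometric side, the tautological classes $c([\Sbundle_M^\vee],z)$ and $c([\Qbundle_M],w)$ satisfy analogous deletion-contraction behavior (tracked through the dependence of $B_\sigma$ on whether $e\in B_\sigma$) already developed in \cite{tautological-classes}, and the map $\pi^n$ is compatible with the natural maps relating $\Pi_{n-1}$ to $\Pi_n$ and $\P^{n-1}\times\P^{n-1}$ to $\P^n\times\P^n$, so this translates into a corresponding decomposition of the pushforward at $(a,b)$.

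The main obstacle I expect is the geometric bookkeeping. By Proposition \ref{gysin}, the restriction of the pushforward at $(a,b)$ is a sum over permutations $\sigma \in S_{n+1}$ with $\sigma(0) = b$ and $\sigma(n) = a$ of products of the tangent ratio $\tang_{a,b}/\tang_\sigma$ with the localizations of the Chern classes at $q_\sigma$, which depend only on the lex-first basis $B_\sigma$. Reorganizing this sum by the value $\sigma^{-1}(e)$ and by whether $e \in B_\sigma$, and matching the resulting pieces with the $S\ni e$ versus $S\not\ni e$ split on the formula side while carefully tracking the extra character factors involving $t_e$, is where the bulk of the calculation lies. A useful preliminary sanity check is that $F_M$ is indeed a polynomial despite the denominator $\alpha+\beta$: substituting $\alpha = -\beta$ collapses the $z$- and $w$-factors to $(1+\beta z)^{\rank(M)}(1-\beta w)^{\crk(M)}$ independent of $S$, factors out $\prod_{e \in E}(\beta - t_e)$, and leaves the sum $\sum_{S\subseteq E}(-1)^{|S|}=0$, confirming that $\alpha+\beta$ divides the inner sum and that $F_M \in \Z[E][\alpha,\beta,z,w]$.
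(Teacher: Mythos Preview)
Your approach is essentially the paper's: both sides are shown to satisfy matching deletion-contraction relations (splitting $\sum_{S\subseteq E}$ by $e\in S$ or not on the formula side, and using the lex-first-basis lemmas of \cite{tautological-classes} together with the localization formula for $\pi^n_*$ on the geometric side), and the result follows by induction on $|E|$. One point to tighten: since the deletion-contraction step requires the auxiliary element $\hat e$ to be distinct from \emph{both} $a$ and $b$ (otherwise the sum over permutations with $\sigma(0)=b$, $\sigma(n)=a$ does not reorganize cleanly), the case $n=1$ with $a\neq b$ cannot be reached by the induction and must be checked directly as a second base case alongside $n=0$.
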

Observe that $F_M$ is indeed a polynomial, since mod $\alpha+\beta$, the sum becomes
\begin{multline*}
    \sum_{S\subseteq E}(1+\beta z)^{\rk(M)}(1-\beta w)^{\crk(M)}\prod_{e\in E}(\beta-t_e)\cdot(-1)^{|S|} =\\= \left[(1+\beta z)^{\rk(M)}(1-\beta w)^{\crk(M)}\prod_{e\in E}(\beta-t_e)\right]\cdot(1-1)^{|E|} = 0.
\end{multline*}
Note that when viewing the cohomology of $\P^n\times\P^n$ as \eqref{eq:middle-guy-cohomology}, the Theorem simply states that
\[
\pi^n_*\Big(c([\Sbundle_M^\vee],z)c([\Qbundle_M],w)\Big) = F_M(\alpha,\beta,z,w)
\]
holds in $H_T(\P^n\times\P^n)[z,w]$.
For convenience, let us denote $\xi_M := c([\Sbundle_M^\vee],z)c([\Qbundle_M],w)$. We first establish a deletion-contraction relation for $F_M$:

\begin{proposition}
    \label{prop:FM-deletion-contraction}
    Let $\hat e\in E$. Then
    \[
        F_M = \begin{cases}
            (1+\alpha w)(\beta-t_{\hat e})F_{M\setminus\hat e} + (1+\beta z)(\alpha+t_{\hat e})F_{M\contract\hat e}, & \text{if $\hat e$ is a general element in $M$,}\\
            (1-t_{\hat e}w)(\alpha+\beta)F_{M\delete\hat e},& \text{if $\hat e$ is a loop in $M$,}\\
            (1+t_{\hat e}z)(\alpha+\beta)F_{M\contract\hat e},& \text{if $\hat e$ is a coloop in $M$.}
        \end{cases}
    \]
\end{proposition}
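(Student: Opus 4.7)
The plan is to prove the identity by direct computation, splitting the sum defining $F_M$ according to whether the distinguished element $\hat{e}$ lies in $S$ or not. Writing $E' := E\setminus\{\hat{e}\}$, every $S\subseteq E$ is either some $S'\subseteq E'$ or $S'\cup\{\hat{e}\}$ with $S'\subseteq E'$, so the identity
\[
(\alpha+\beta) F_M \;=\; \sum_{S'\subseteq E'} A_{S'} \;+\; \sum_{S'\subseteq E'} B_{S'}
\]
expresses $(\alpha+\beta)F_M$ as a ``deletion part'' plus a ``contraction part''. The goal is to show that in each $S'$-term, the summand from $S = S'$ can be factored as a prefactor (not depending on $S'$) times the corresponding $M\delete\hat{e}$-summand of $(\alpha+\beta)F_{M\delete\hat{e}}$, and symmetrically for $S = S'\cup\{\hat e\}$ with $M\contract\hat{e}$.

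The key step is a careful bookkeeping of how $\rank(M)$, $\rank_M(S)$, $\crk(M)$, and $\nl_M(S)$ transform when restricted to subsets of $E'$ versus subsets containing $\hat{e}$, in each of the three cases. For $\hat{e}$ general we have $\rank(M)=\rank(M\delete\hat{e})=\rank(M\contract\hat{e})+1$, $\rank_M(S') = \rank_{M\delete\hat{e}}(S')$, and $\rank_M(S'\cup\hat{e}) = \rank_{M\contract\hat{e}}(S')+1$; combining this with $|E|=|E'|+1$ yields that the $S=S'$-summand equals $(1+\alpha w)(\beta-t_{\hat e})$ times the $M\delete\hat{e}$-summand, while the $S=S'\cup\{\hat e\}$-summand equals $(1+\beta z)(\alpha+t_{\hat e})$ times the $M\contract\hat{e}$-summand, and summing over $S'$ and dividing by $\alpha+\beta$ gives the claim directly.

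For the loop case, $\hat{e}$ contributes no change in rank, so both summands (with and without $\hat{e}$) produce the same $M\delete\hat{e}$-summand but with different scalar prefactors; specifically $S=S'$ contributes $(1+\alpha w)(\beta-t_{\hat e})$ and $S=S'\cup\{\hat e\}$ contributes $(1-\beta w)(\alpha+t_{\hat e})$. The critical algebraic identity to verify is
\[
(1+\alpha w)(\beta-t_{\hat e})+(1-\beta w)(\alpha+t_{\hat e}) = (\alpha+\beta)(1-w t_{\hat e}),
\]
after which the $(\alpha+\beta)$ cancels with the one in the definition of $F_M$ to yield the stated relation. The coloop case is dual: the two prefactors become $(1-\alpha z)(\beta-t_{\hat e})$ and $(1+\beta z)(\alpha+t_{\hat e})$, which sum to $(\alpha+\beta)(1+z t_{\hat e})$.

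The main obstacle is purely bookkeeping --- there are no deep ideas, but the exponents $\rank(M)-\rank_M(S)$ and $\crk(M)-\nl_M(S)$ must each be translated correctly in four configurations (two cases for $S$, times deletion vs.\ contraction), and one must resist miscounting the $\pm 1$'s that appear from $|E|=|E'|+1$ and from $\hat{e}$ contributing to $\rank$ or to $|S|$. The cleanest presentation is to organize the proof in three separate cases, inside each case performing the split over $\hat{e}\in S$ versus $\hat{e}\notin S$, and finishing with the short verification of the two-line algebraic identity in the loop and coloop cases.
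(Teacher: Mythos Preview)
Your proposal is correct and follows essentially the same approach as the paper's proof: both split the defining sum of $F_M$ according to whether $\hat e\in S$, track the rank/nullity bookkeeping to identify each piece with the corresponding sum for $M\delete\hat e$ or $M\contract\hat e$, and in the loop/coloop cases verify the same algebraic identity $(1+\alpha w)(\beta-t_{\hat e})+(1-\beta w)(\alpha+t_{\hat e}) = (\alpha+\beta)(1-t_{\hat e}w)$ (resp.\ its dual). There is no substantive difference in method.
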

\begin{proof}
    In the sum over $S\subseteq E$ defining $F_M$, let us split the terms into a sum over ${\hat e}\notin S\subseteq E$ and another sum over ${\hat e}\in S\subseteq E$.
    
    Let us first work in the case when ${\hat e}$ is general. For ${\hat e}\notin S\subseteq E$,  we then have $\rank_M(S)=\rank_{M\delete {\hat e}}(S)$, and thus $\nullity_M(S)=\nullity_{M\delete {\hat e}}(S)$, as well as $\rank(M)=\rank(M\delete {\hat e})$ and $\crk(M)=\crk(M\delete\hat{e})+1$, giving
    \begin{multline*}
        \frac{1}{\alpha+\beta} \sum_{\hat e\notin S\subset{E}} (1-\alpha z)^{\rank(M)-\rank_M(S)}(1+\alpha w)^{\crk(M)-\nullity_M(S)}(1+\beta  z)^{\rank_M(S)}(1-\beta w)^{\nullity_M(S)}\cdot{}\\{}\cdot\prod_{e\in S} (\alpha+t_e)\prod_{e\in E\setminus S}(\beta-t_e).
    \end{multline*}
    \begin{multline*}
        = (1+\alpha w)(\beta-t_e)\frac1{\alpha+\beta}\sum_{S\subseteq E\setminus\{{\hat e}\}} (1-\alpha z)^{\rk(M\delete \hat e)-\rk_{M\delete {\hat e}}(S)}(1+\alpha w)^{\crk(M\delete {\hat e})-\nullity_{M\delete {\hat e}}(S)}\cdot{}\\{}\cdot(1+\beta z)^{\rk_{M\delete {\hat e}}(S)}(1-\beta w)^{\nullity_{M\delete {\hat e}}(S)}\prod_{e\in S}(\alpha+t_e)\prod_{e\in (E\setminus\{{\hat e}\})\setminus S}(\beta-t_e)
    \end{multline*}
    \begin{multline*}
        = (1+\alpha w)(\beta-t_{\hat e}) F_{M\delete {\hat e}}.\\
    \end{multline*}
    In a similar vein, if we consider ${\hat e}\in S\subseteq E$ and denote $S':=S\setminus\{{\hat e}\}$, we observe $\rk_M(S)=\rk_{M\contract {\hat e}}(S')+1$, $\nl_M(S)=\nl_{M\contract {\hat e}}(S')$ (in particular, this holds for $S=E$) and obtain
    \goodbreak
    \begin{multline*}
        \frac{1}{\alpha+\beta} \sum_{\hat e\in S\subset{E}} (1-\alpha z)^{\rank(M)-\rank_M(S)}(1+\alpha w)^{\crk(M)-\nullity_M(S)}(1+\beta  z)^{\rank_M(S)}(1-\beta w)^{\nullity_M(S)}\cdot{}\\{}\cdot\prod_{e\in S} (\alpha+t_e)\prod_{e\notin S}(\beta-t_e).
    \end{multline*}
    \begin{multline*}
        =(1+\beta z)(\alpha+t_{\hat e}) F_{M\contract {\hat e}},\\
    \end{multline*}
    which finishes the proof of the case when ${\hat e}$ is a general element.

    Since the loop and coloop cases are very analogous, let us only show the former. As in the previous case, the ${\hat e}\notin S\subseteq E$ terms contribute $(1+\alpha w)(\beta-t_{\hat e})F_{M\delete {\hat e}}$. But since we know ${\hat e}$ is a loop, we may relate the $\hat e\in S\subseteq E$ terms to $F_{M\delete {\hat e}}$ as well. In particular, denoting $S'=S\setminus\{{\hat e}\}$, we get $\rank_M(S)=\rank_{M\delete {\hat e}}(S')$ and $\nullity_M(S)=\nullity_{M\delete {\hat e}}(S')+1$ (again, this holds for $S=E$ as a special case),  so the ${\hat e}\in S\subseteq E$ terms contribute
    \[
        (1-\beta w)(\alpha+t_{\hat e})F_{M\delete {\hat e}},
    \]
    giving
    \[
        F_M = \Big((1+\alpha w)(\beta-t_{\hat e})+(1-\beta w)(\alpha+t_{\hat e})\Big)F_{M\delete {\hat e}} = (1-t_{\hat e}w)(\alpha+\beta)F_{M\delete {\hat e}}
    \]
    in total.
\end{proof}

Now we focus on establishing an analogous relation for $\pi^n_*(\xi_M)$. Since in the map $\pi^n:X_E\to \P^n\times \P^n$, the torus-fixed point of $X_E$ corresponding to a permutation $\tilde\sigma\in \Bij(E)$ will be mapped to the torus-fixed point of $\P^n\times\P^n$ corresponding to the pair $(\tilde\sigma(n), \tilde\sigma(0))$, we write the pushforward in terms of the localizations at fixed points using the respective tangent directions as
\begin{equation}
    \label{eq:pushforward-localization}
    \pi^n_*(\xi_M)_{a,b} = \tang^{\P^n\times\P^n}_{a,b}\sum_{\substack{\tilde\sigma\in\Bij(E)\\\tilde\sigma(n)=a\\\tilde\sigma(0)=b}} \frac{(\xi_M)_{\tilde\sigma}}{\tang^{X_E}_{\tilde\sigma}}.
\end{equation}
In the following, we will treat permutations as ordered tuples of elements. The main idea for the deletion-contraction relation for this pushforward is that we group together the terms for those $\tilde\sigma\in\Bij(E)$ that, as tuples, yield the same tuple $\sigma\in \Bij(E\setminus\{\hat e\})$ by omitting the element $\hat e$. For this, we use several observations from \cite{tautological-classes} connecting $\tilde\sigma$ and $\sigma$ in this situation. We denote by $\sigma^\ell$, for $\ell=0,\dots,n$, the permutation in $\Bij(E)$ that is obtained from $\sigma$ by inserting $\hat e$ in position $\ell$, i.e. so that $\sigma^\ell(\ell)=\hat e$.

\begin{lemma}[\cite{tautological-classes}, Lemma 4.4, Definition 4.5]
    \label{lem:bases-with-ksigma}
    Consider a matroid  $M$ on ground set $E$ and a permutation $\sigma\in\Bij(E\setminus\{\hat e\})$. Then there exists an index $k_\sigma=k_\sigma(M)\in\{-1,\dots,n\}$ such that
    \begin{itemize}
        \item $B_{\sigma^\ell}(M) = B_\sigma(M\contract \hat e)\sqcup\{\hat e\}$ whenever $\ell\leq k_\sigma$,
        \item $B_{\sigma^\ell}(M) = B_\sigma(M\delete \hat e)$ whenever $\ell>k_\sigma$, and
        \item $B_\sigma(M\contract \hat e)\sqcup\{k_\sigma\}=B_\sigma(M\delete \hat e)$.
    \end{itemize}
    Note that $\hat e$ is a loop exactly when $k_\sigma = -1$, and a coloop exactly when $k_\sigma = n$.
\end{lemma}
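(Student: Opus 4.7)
The plan is to analyze the lex-first greedy algorithm of Definition \ref{lex} and track how its output changes when $\hat e$ is inserted at various positions in $\sigma$. Write $J_j$ for the greedy partial basis in $M$ after processing the first $j$ elements of the tuple $\sigma(0),\dots,\sigma(n-1)\in E\setminus\{\hat e\}$. Because matroid closure is monotone in its argument, the predicate $\hat e \notin \overline{J_j}_M$ holds on an initial segment of $j\in\{0,1,\dots,n\}$; I would define $k_\sigma$ as the largest $j$ in this segment, with $k_\sigma = -1$ if the segment is empty (so $\hat e$ is a loop) and $k_\sigma = n$ if it is all of $\{0,\dots,n\}$ (so $\hat e$ is a coloop). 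In the third bullet I interpret $\{k_\sigma\}$ as $\{\sigma(k_\sigma)\}$, which is well-defined precisely when $\hat e$ is general.

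For the first bullet I would show by induction on $j$ that, for every $\ell \leq k_\sigma$, the $\sigma^\ell$-greedy in $M$ and the $\sigma$-greedy in $M\contract\hat e$ produce the same partial basis at all steps $j\leq\ell$. The key technical point is that, for $j\leq k_\sigma$, the set $J_j\cup\{\hat e\}$ is independent in $M$, so independence of $J_j\cup\{\sigma(j)\}$ in $M$ is equivalent to its independence in $M\contract\hat e$. The nontrivial direction uses forward information: the $M$-greedy extends $J_j\cup\{\sigma(j)\}$ to $J_\ell$, and $J_\ell\cup\{\hat e\}$ being independent precludes a circuit through $\hat e$ and $\sigma(j)$ in $J_j\cup\{\sigma(j),\hat e\}$. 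Once the two greedies agree at step $\ell$, the $\sigma^\ell$-greedy successfully inserts $\hat e$ (possible because $\hat e\notin\overline{J_\ell}_M$); an analogous equivalence for the subsequent steps, namely that adding $\sigma(j)\neq\hat e$ to $K\cup\{\hat e\}$ in $M$ matches adding $\sigma(j)$ to $K$ in $M\contract\hat e$, lets the induction continue and yields $B_{\sigma^\ell}(M) = B_\sigma(M\contract\hat e)\sqcup\{\hat e\}$.

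The second bullet is immediate from the same setup: for $\ell > k_\sigma$, the $\sigma^\ell$-greedy reaches $\hat e$ with partial basis $J_\ell$ already containing $\hat e$ in its $M$-closure, skips $\hat e$, and then processes the remaining elements of $E\setminus\{\hat e\}$ under $M$-independence, which coincides with $(M\delete\hat e)$-independence on that set. Hence $B_{\sigma^\ell}(M) = B_\sigma(M\delete\hat e)$.

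For the third bullet I would compare the $M\delete\hat e$-greedy and the $M\contract\hat e$-greedy for $\sigma$ directly. They coincide for $j\leq k_\sigma$. At step $k_\sigma$ the threshold property forces the $M\delete\hat e$-greedy to add $\sigma(k_\sigma)$, since otherwise $\hat e$ would remain outside the closure at step $k_\sigma+1$, contradicting maximality of $k_\sigma$; whereas the $M\contract\hat e$-greedy skips $\sigma(k_\sigma)$, because the fundamental circuit of $\hat e$ on $J_{k_\sigma}\cup\{\sigma(k_\sigma)\}$ in $M$ contains both $\hat e$ and $\sigma(k_\sigma)$. For $j > k_\sigma$ I would maintain the inductive invariant $J_j^{M\delete\hat e} = J_j^{M\contract\hat e}\sqcup\{\sigma(k_\sigma)\}$ together with $\overline{J_j^{M\delete\hat e}}_M = \overline{J_j^{M\contract\hat e}\cup\{\hat e\}}_M$; the common $M$-closure guarantees both greedies make identical decisions at each subsequent step, preserving both equalities. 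Taking $j = n$ gives $B_\sigma(M\delete\hat e) = B_\sigma(M\contract\hat e)\sqcup\{\sigma(k_\sigma)\}$. The main obstacle I anticipate is the bookkeeping in this three-way parallel induction; the underlying matroid-theoretic inputs (monotonicity of closure, the uniqueness of fundamental circuits) are entirely elementary once correctly invoked.
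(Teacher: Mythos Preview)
The paper does not prove this lemma; it is imported verbatim from \cite{tautological-classes} (Lemma~4.4 and Definition~4.5 there), so there is no in-paper argument to compare against. Your proof is correct and self-contained, and it is essentially the argument one finds in \cite{tautological-classes}: define $k_\sigma$ as the last index at which $\hat e$ lies outside the $M$-closure of the running $M\delete\hat e$-greedy basis, then run the three greedies in parallel.

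Two small comments. First, your ``forward information'' step in the first bullet is phrased as using $J_\ell\cup\{\hat e\}$ independent for the specific $\ell$ at hand, but the cleaner formulation is that $J_{j+1}\cup\{\hat e\}$ is independent whenever $j+1\le k_\sigma$, by the very definition of $k_\sigma$; this drives the induction uniformly and avoids any apparent dependence on $\ell$. Second, you are right that the third bullet only makes sense when $\hat e$ is general, and that $\{k_\sigma\}$ in the statement should be read as $\{\sigma(k_\sigma)\}$ (as the subsequent Lemma~\ref{lem:classes-with-ksigma}(b) and the proof of Proposition~\ref{prop:graded-chern-pushforward-deletion-contraction} make clear). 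Your closure-equality invariant $\overline{J_j^{M\delete\hat e}}_M=\overline{J_j^{M\contract\hat e}\cup\{\hat e\}}_M$ for $j>k_\sigma$ is exactly the right bookkeeping device, and the base case follows from the fundamental-circuit observation you give.
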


As an immediate consequence of these relations of bases, we can observe the behavior of Chern classes under this omission-insertion:
\begin{lemma}[\cite{tautological-classes},  Lemma 4.6]
    \label{lem:classes-with-ksigma}
    Let $M$ be a matroid on ground set $E\ni\hat e$ and consider $\sigma\in\Bij(E\setminus\{\hat e\})$.
    \begin{enumerate}
        \item For any $\ell=0,\dots,n$, we have
        \begin{align*}
            c([\Sbundle^\vee_M],z)_{\sigma^\ell} &=  \begin{cases}
                (1+t_{\hat e}z)\cdot c([\Sbundle^\vee_{M\contract\hat e}],z)_\sigma, &\text{if $\ell\leq k_\sigma(M)$,}\\
                c([\Sbundle^\vee_{M\delete\hat e}],z)_\sigma, &\text{if $\ell> k_\sigma(M)$,}
            \end{cases}\\
            c([\Qbundle_M],w)_{\sigma^\ell} &= \begin{cases}
                c([\Qbundle_{M\contract\hat e}],w)_\sigma, &\text{if $\ell\leq k_\sigma(M)$,}\\
                (1-t_{\hat e}w)\cdot c([\Qbundle_{M\delete\hat e}],w)_\sigma, &\text{if $\ell> k_\sigma(M)$,}
            \end{cases}\\
            (\xi_M)_{\sigma^\ell} &= \begin{cases}
                (1+t_{\hat e}z)\cdot(\xi_{M\contract\hat e})_\sigma, &\text{if $\ell\leq k_\sigma(M)$,}\\
                (1-t_{\hat e}w)\cdot(\xi_{M\delete\hat e})_\sigma, &\text{if $\ell> k_\sigma(M)$.}
            \end{cases}
        \end{align*}

        \item If $\hat e$  is a general element in $M$, i.e. if $k=k_\sigma(M)$ is neither $-1$ nor $n$, then
        \begin{align*}
            (1+t_{\sigma(k)}z)\cdot c([\Sbundle^\vee_{M\contract\hat e}],z)_\sigma &= c([\Sbundle^\vee_{M\delete\hat e}],z)_\sigma,\\
            (1-t_{\sigma(k)}w)\cdot c([\Qbundle_{M\delete\hat e}],w)_\sigma &= c([\Qbundle_{M\contract\hat e}],w)_\sigma.
        \end{align*}
    \end{enumerate}
\end{lemma}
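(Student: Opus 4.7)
The plan is to realize each component of the graded Chern classes as a concrete product of linear factors, and then deduce the lemma as bookkeeping with the basis identities furnished by Lemma \ref{lem:bases-with-ksigma}.

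First I would unpack the definitions: since $\sum_i \elem_i(\{t_e\}_{e \in B})\, z^i = \prod_{e \in B}(1 + t_e z)$, the component of the graded sub Chern class at a permutation $\tilde\sigma \in \Bij(E)$ is
\[
    c([\Sbundle_M^\vee], z)_{\tilde\sigma} = \prod_{e \in B_{\tilde\sigma}(M)}(1 + t_e z),
\]
and similarly
\[
    c([\Qbundle_M], w)_{\tilde\sigma} = \prod_{e \notin B_{\tilde\sigma}(M)}(1 - t_e w).
\]
Consequently $(\xi_M)_{\tilde\sigma}$ is the product of these two expressions, so all three identities of part (a) will follow from the corresponding identities for the individual products.

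For part (a), I would apply Lemma \ref{lem:bases-with-ksigma} with $\tilde\sigma = \sigma^\ell$. If $\ell \leq k_\sigma(M)$, then $B_{\sigma^\ell}(M) = B_\sigma(M\contract \hat e) \sqcup \{\hat e\}$: the sub-product acquires an extra factor $(1+t_{\hat e}z)$ relative to $c([\Sbundle_{M\contract\hat e}^\vee], z)_\sigma$, while the complement $E \setminus B_{\sigma^\ell}(M)$ coincides with the complement of $B_\sigma(M\contract\hat e)$ inside the ground set $E \setminus \{\hat e\}$ of $M\contract\hat e$, so the quotient product is unchanged. The case $\ell > k_\sigma(M)$ is completely dual: $B_{\sigma^\ell}(M) = B_\sigma(M\delete\hat e)$, so the sub-product equals $c([\Sbundle_{M\delete\hat e}^\vee], z)_\sigma$ verbatim while the quotient product gains the extra factor $(1 - t_{\hat e}w)$. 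Multiplying through yields the stated identity for $\xi_M$.

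For part (b), when $\hat e$ is general so that $0 \leq k := k_\sigma(M) \leq n-1$, the third bullet of Lemma \ref{lem:bases-with-ksigma} asserts $B_\sigma(M \delete \hat e) = B_\sigma(M\contract \hat e) \sqcup \{\sigma(k)\}$. The claimed identity for the sub Chern classes then comes from factoring $(1 + t_{\sigma(k)} z)$ out of the product over $B_\sigma(M\delete \hat e)$. The quotient identity is its exact dual: the complements of $B_\sigma(M\delete\hat e)$ and $B_\sigma(M\contract\hat e)$, taken inside the common ground set $E \setminus \{\hat e\}$, differ precisely by the element $\sigma(k)$, contributing the factor $(1 - t_{\sigma(k)}w)$.

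Given Lemma \ref{lem:bases-with-ksigma}, this is not really a hard argument; the only subtlety is keeping straight which ground set is being used when taking complements for $M$, $M \contract \hat e$, and $M \delete \hat e$. The genuine combinatorial content, namely the existence and meaning of the cutoff $k_\sigma$, is entirely packaged into Lemma \ref{lem:bases-with-ksigma}.
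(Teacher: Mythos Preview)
Your argument is correct and is exactly the route the paper indicates: the paper does not supply a proof but introduces the lemma as ``an immediate consequence of these relations of bases'' from Lemma~\ref{lem:bases-with-ksigma}, citing \cite{tautological-classes}. Your unpacking of the graded Chern classes as products $\prod_{e\in B}(1+t_e z)$ and $\prod_{e\notin B}(1-t_e w)$ and subsequent bookkeeping with the basis identities is precisely that immediate deduction.
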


With these lemmas, we may prove the deletion-contraction relation for the pushforward:
\begin{proposition}
    \label{prop:graded-chern-pushforward-deletion-contraction}
    Let $\hat e\in E$ be distinct from both of $a,b\in E$. Then
    \[
        \pi^n_*(\xi_M)_{a,b} = \begin{cases}
            (1-t_aw)(t_b-t_{\hat e})\pi^{n-1}_*(\xi_{M\delete \hat e})_{a,b} +{}& \text{if $\hat e$ is a general}\\ \qquad{}+(1+t_bz)(t_{\hat e}-t_a)\pi^{n-1}_*(\xi_{M\contract \hat e})_{a,b},
            &\text{\qquad element in $M$,}\\
            (1-t_{\hat e}w)(t_b-t_a)\pi^{n-1}_*(\xi_{M\delete \hat e})_{a,b},& \text{if $\hat e$ is a loop in $M$,}\\
            (1+t_{\hat e}z)(t_b-t_a)\pi^{n-1}_*(\xi_{M\contract \hat e})_{a,b},& \text{if $\hat e$ is a coloop in $M$.}\\
        \end{cases}
    \]
\end{proposition}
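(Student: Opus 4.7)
The plan is to expand the pushforward using the localization formula \eqref{eq:pushforward-localization}, group the permutations $\tilde\sigma\in\Bij(E)$ satisfying $\tilde\sigma(0)=b$, $\tilde\sigma(n)=a$ according to the $\sigma\in\Bij(E\setminus\{\hat e\})$ obtained by omitting $\hat e$, and then combine Lemmas \ref{lem:bases-with-ksigma}--\ref{lem:classes-with-ksigma} with a telescoping identity to match the desired deletion-contraction formula term-by-term in $\sigma$. Since $\hat e\notin\{a,b\}$, every such $\tilde\sigma$ is uniquely of the form $\sigma^\ell$ with $\sigma(0)=b$, $\sigma(n-1)=a$ and insertion index $\ell\in\{1,\dots,n-1\}$.

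\textbf{Tangent comparison.} Reading off tangent characters from the description of one-dimensional orbits in $\Pi_n$, one gets
\[
    \tang^{X_E}_{\sigma^\ell} \;=\; \tang^{X_{E\setminus\{\hat e\}}}_\sigma \cdot \frac{(t_{\sigma(\ell-1)}-t_{\hat e})(t_{\hat e}-t_{\sigma(\ell)})}{t_{\sigma(\ell-1)}-t_{\sigma(\ell)}},
\]
and comparing the tangent weights of $\P^n$ at $a$ (resp.\ $\P(\bigwedge^n\C^{n+1})$ at $b$) with their $E\setminus\{\hat e\}$-counterparts yields $\tang^{\P^n\times\P^n}_{a,b} = \tang^{\P^{n-1}\times\P^{n-1}}_{a,b}\cdot(t_{\hat e}-t_a)(t_b-t_{\hat e})$. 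Substituting these into the localization expression for $\pi^n_*(\xi_M)_{a,b}$ and using Lemma \ref{lem:classes-with-ksigma}(1) to replace $(\xi_M)_{\sigma^\ell}$ by $(1+t_{\hat e}z)(\xi_{M\contract\hat e})_\sigma$ or $(1-t_{\hat e}w)(\xi_{M\delete\hat e})_\sigma$ depending on whether $\ell\leq k_\sigma(M)$ or $\ell>k_\sigma(M)$, the problem reduces to evaluating, for each $\sigma$, two partial sums in $\ell$.

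\textbf{Telescoping.} The crucial identity is
\[
    \frac{t_{\sigma(\ell-1)}-t_{\sigma(\ell)}}{(t_{\sigma(\ell-1)}-t_{\hat e})(t_{\hat e}-t_{\sigma(\ell)})} \;=\; \frac{1}{t_{\sigma(\ell-1)}-t_{\hat e}} \;-\; \frac{1}{t_{\sigma(\ell)}-t_{\hat e}},
\]
which makes both $\ell$-sums telescope. In the loop case ($k_\sigma=-1$) only the $\ell>k_\sigma$ piece survives and collapses to $\frac{1}{t_b-t_{\hat e}}-\frac{1}{t_a-t_{\hat e}}$; multiplying by $(t_{\hat e}-t_a)(t_b-t_{\hat e})$ gives the factor $t_b-t_a$ predicted by the statement. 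The coloop case ($k_\sigma=n$) is completely analogous, using the $\ell\leq k_\sigma$ half.

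\textbf{General element, the main obstacle.} In the general case $-1<k_\sigma<n$ both telescoped pieces leave not only endpoint contributions at $t_a,t_b$ (which, after multiplication by $(t_{\hat e}-t_a)(t_b-t_{\hat e})$, collect into exactly $(1-t_aw)(t_b-t_{\hat e})(\xi_{M\delete \hat e})_\sigma+(1+t_bz)(t_{\hat e}-t_a)(\xi_{M\contract \hat e})_\sigma$) but also residual $t_{\sigma(k_\sigma)}$-terms on both branches with a common denominator $t_{\sigma(k_\sigma)}-t_{\hat e}$. The hard part will be checking that this residue vanishes for each $\sigma$ individually. After collecting factors, the residue reduces to
\[
    \frac{(t_a-t_{\hat e})(t_b-t_{\hat e})}{t_{\sigma(k_\sigma)}-t_{\hat e}}\Bigl[(1+t_{\sigma(k_\sigma)}z)(\xi_{M\contract \hat e})_\sigma - (1-t_{\sigma(k_\sigma)}w)(\xi_{M\delete \hat e})_\sigma\Bigr],
\]
and the bracket is exactly zero by Lemma \ref{lem:classes-with-ksigma}(2): multiplying the two identities there (for $c([\Sbundle^\vee_{\bullet}],z)_\sigma$ and $c([\Qbundle_\bullet],w)_\sigma$) gives $(1+t_{\sigma(k_\sigma)}z)(\xi_{M\contract\hat e})_\sigma=(1-t_{\sigma(k_\sigma)}w)(\xi_{M\delete\hat e})_\sigma$. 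Once this cancellation is verified, summing over the remaining $\sigma$ and reassembling via \eqref{eq:pushforward-localization} for $\pi^{n-1}_*$ produces the asserted formula.
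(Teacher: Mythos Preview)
Your overall strategy---localization, grouping by $\sigma$, tangent comparison, and telescoping---is exactly the paper's approach, and the loop and coloop cases are handled correctly. The issue is in the general case, where both of your explicit claims are miscomputed.

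After telescoping, the two pieces carry the prefactors $(1+t_{\hat e}z)$ and $(1-t_{\hat e}w)$ coming from Lemma~\ref{lem:classes-with-ksigma}(a); these do \emph{not} change when you pass to endpoints or to the residual at $t_{\sigma(k_\sigma)}$. Concretely, the $b$- and $a$-endpoint contributions are
\[
    (1+t_{\hat e}z)(t_{\hat e}-t_a)(\xi_{M\contract\hat e})_\sigma
    \;+\;
    (1-t_{\hat e}w)(t_b-t_{\hat e})(\xi_{M\delete\hat e})_\sigma,
\]
not the expression with $(1+t_bz)$ and $(1-t_aw)$ that the statement requires. Likewise, the true residual is
\[
    \frac{(t_{\hat e}-t_a)(t_b-t_{\hat e})}{t_{\sigma(k_\sigma)}-t_{\hat e}}
    \Bigl[-(1+t_{\hat e}z)(\xi_{M\contract\hat e})_\sigma + (1-t_{\hat e}w)(\xi_{M\delete\hat e})_\sigma\Bigr],
\]
with $t_{\hat e}$, not $t_{\sigma(k_\sigma)}$, inside the bracket---and this bracket is \emph{not} zero. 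You have, in effect, replaced $t_{\hat e}$ by the convenient variable in each spot, which is what one wants but not what one has.

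What actually happens is that the discrepancy between your endpoint terms and the desired right-hand side is precisely cancelled by the residual, but seeing this requires one more step. The paper handles it by subtracting the target expression from the telescoped form and then using Lemma~\ref{lem:classes-with-ksigma}(b) together with the elementary identities
\begin{align*}
    (1+t_{\hat e}z)(t_{\sigma(k_\sigma)}-t_b) - (1+t_bz)(t_{\sigma(k_\sigma)}-t_{\hat e}) &= -(1+t_{\sigma(k_\sigma)}z)(t_b-t_{\hat e}),\\
    (1-t_{\hat e}w)(t_{\sigma(k_\sigma)}-t_a) - (1-t_aw)(t_{\sigma(k_\sigma)}-t_{\hat e}) &= (1-t_{\sigma(k_\sigma)}w)(t_{\hat e}-t_a),
\end{align*}
to produce the factors $(1+t_{\sigma(k_\sigma)}z)$ and $(1-t_{\sigma(k_\sigma)}w)$ that then cancel via Lemma~\ref{lem:classes-with-ksigma}(b). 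So your invocation of the identity $(1+t_{\sigma(k_\sigma)}z)(\xi_{M\contract\hat e})_\sigma=(1-t_{\sigma(k_\sigma)}w)(\xi_{M\delete\hat e})_\sigma$ is the right finishing move, but it only becomes applicable after this additional algebraic rearrangement, which your outline skips.
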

\begin{proof}
    We start by recalling
    \begin{equation}
        \tag{\ref{eq:pushforward-localization}}
        \pi^n_*(\xi_M)_{a,b} = \tang^{\P^n\times\P^n}_{a,b}\sum_{\substack{\tilde\sigma\in\Bij(E)\\\tilde\sigma(n)=a\\\tilde\sigma(0)=b}} \frac{(\xi_M)_{\tilde\sigma}}{\tang^{X_E}_{\tilde\sigma}}.
    \end{equation}
    We reorder the sum by setting $\tilde\sigma=\sigma^\ell$, where $\sigma$ runs through those permutations in $\Bij(E\setminus\{\hat e\})$ that satisfy $\sigma(n-1)=a$, $\sigma(0)=b$ and $\ell$ runs through $1,\dots,n-1$ (since $\hat e$ cannot be the first or the last element in $\tilde\sigma$).

    With this reordering, we wish to rewrite $(\xi_M)_{\sigma^\ell}$ and $\tang^{X_E}_{\sigma^\ell}$ more directly in terms of $\sigma$ and $\ell$. For $(\xi_M)_{\sigma^\ell}$, this is contained
    in Lemma~\ref{lem:classes-with-ksigma}(a), whereas for the tangent directions, we easily see
    \begin{align*}
        \tang^{X_E}_{\sigma^\ell} &= \prod_{i=1}^n (t_{\sigma^\ell(i-1)}-t_{\sigma^\ell(i)}) =\frac{(t_{\sigma(\ell-1)}-t_{\hat e})(t_{\hat e}-t_{\sigma(\ell)})}{t_{\sigma(\ell-1)}-t_{\sigma(\ell)}} \prod_{i=1}^{n-1} (t_{\sigma(i-1)}-t_{\sigma(i)}) =\\&=\frac{(t_{\sigma(\ell-1)}-t_{\hat e})(t_{\hat e}-t_{\sigma(\ell)})}{t_{\sigma(\ell-1)}-t_{\sigma(\ell)}}  \tang^{X_{E\setminus\{\hat e\}}}_\sigma.
    \end{align*}
    Similarly, we can also express 
    the tangent directions on $\P^n\times\P^n$ in relation to those on $\P^{n-1}\times\P^{n-1}$, where we use coordinates $E \setminus \{\hat{e}\}$:
    \[
        \tang^{\P^n\times\P^n}_{a,b} = \prod_{e\neq a}(t_e-t_a)\prod_{e\neq b}(t_b-t_e) =  (t_{\hat e}-t_a)(t_b-t_{\hat e})\cdot\tang^{\P^{n-1}\times\P^{n-1}}_{a,b}.
    \]

    At this point, it is advantageous to distinguish the loop and coloop cases:
    \begin{itemize}
        \item Case $k_\sigma(M)=-1$, i.e. $\hat e$ is a loop in $M$. Then all the terms in \eqref{eq:pushforward-localization} relate to classes corresponding to $M\delete \hat e$:
        \begin{align*}
            \pi^n_*(\xi_M)_{a,b} &= (t_{\hat e}-t_a)(t_b-t_{\hat e})\cdot\tang^{\P^{n-1}\times\P^{n-1}}_{a,b}\sum_{\substack{\sigma\in\Bij(E\setminus\{\hat e\})\\\sigma(n-1)=a\\\sigma(0)=b}}\sum_{\ell=1}^{n-1} \frac{(1-t_{\hat e}w)(\xi_{M\delete\hat e})_\sigma\cdot (t_{\sigma(\ell-1)}-t_{\sigma(\ell)})}{\tang^{X_{E\setminus\{\hat e\}}}_\sigma(t_{\sigma(\ell-1)}-t_{\hat e})(t_{\hat e}-t_{\sigma(\ell)})}=\\
            &= (1-t_{\hat e}w)\cdot \tang^{\P^{n-1}\times\P^{n-1}}_{a,b}\sum_{\substack{\sigma\in\Bij(E\setminus\{\hat e\})\\\sigma(n-1)=a\\\sigma(0)=b}} \frac{(\xi_{M\delete\hat e})_\sigma}{\tang^{X_{E\setminus\{\hat e\}}}_\sigma}  (t_{\hat e}-t_a)(t_b-t_{\hat e})\sum_{\ell=1}^{n-1} \left(\frac1{t_{\sigma(\ell-1)}-t_{\hat e}}-\frac1{t_{\sigma(\ell)}-t_{\hat e}}\right).
        \end{align*}
        In the last expression, we recognize a telescoping sum, allowing a simplification
        \begin{align*}
            \sum_{\ell=1}^{n-1} \left(\frac1{t_{\sigma(\ell-1)}-t_{\hat e}}-\frac1{t_{\sigma(\ell)}-t_{\hat e}}\right) = \frac1{t_{\sigma(0)}-t_{\hat e}}-\frac1{t_{\sigma(n-1)}-t_{\hat e}} =\frac1{t_{b}-t_{\hat e}}-\frac1{t_{a}-t_{\hat e}} =  \frac{t_b-t_a}{(t_b-t_{\hat e})(t_{\hat e}-t_a)}.
        \end{align*}
        Thus we finish this case with
        \begin{align*}
            \pi^n_*(\xi_M)_{a,b} &= (1-t_{\hat e}w)(t_b-t_a)\cdot \tang^{\P^{n-1}\times\P^{n-1}}_{a,b}\sum_{\substack{\sigma\in\Bij(E\setminus\{\hat e\})\\\sigma(n-1)=a\\\sigma(0)=b}} \frac{(\xi_{M\delete\hat e})_\sigma}{\tang^{X_{E\setminus\{\hat e\}}}_\sigma} = (1-t_{\hat e}w)(t_b-t_a)\cdot\pi^{n-1}_*(\xi_{M\delete\hat e}).
        \end{align*}

        \item Case $k_\sigma(M)=n$, i.e. $\hat e$ is a coloop in $M$. We proceed in almost the same way as in the previous case obtaining
        \begin{align*}
            \pi^n_*(\xi_M)_{a,b} &=
            (1+t_{\hat e}z)(t_b-t_a)\cdot\pi^{n-1}_*(\xi_{M\contract\hat e}).
        \end{align*}

    \item Case $1\leq k_\sigma(M)\leq n-1$, i.e. $\hat e$ is general in $M$. In accordance with Lemma~\ref{lem:classes-with-ksigma}(a), we split our sums into $\ell\leq k_\sigma$ and $\ell> k_\sigma$:
    \begin{align*}
        \pi^n_*(\xi_M)_{a,b} &= \tang^{\P^n\times\P^n}_{a,b}  \sum_{\substack{\sigma\in\Bij(E\setminus\{\hat e\})\\\sigma(n-1)=a\\\sigma(0)=b}}\left[
            \sum_{i=1}^{k_\sigma}\frac{(\xi_{M\contract\hat e})_\sigma}{\tang^{X_{E\setminus\{\hat e\}}}_\sigma}(1+t_{\hat e}z)\left(\frac1{t_{\sigma(\ell-1)}-t_{\hat e}}-\frac1{t_{\sigma(\ell)}-t_{\hat e}}\right)+{}\right.\\
            &\left.{}\hskip8em+\sum_{i=k_\sigma+1}^{n-1}\frac{(\xi_{M\delete\hat e})_\sigma}{\tang^{X_{E\setminus\{\hat e\}}}_\sigma}(1-t_{\hat e}w)\left(\frac1{t_{\sigma(\ell-1)}-t_{\hat e}}-\frac1{t_{\sigma(\ell)}-t_{\hat e}}\right)
        \right].
    \end{align*}
    We telescope the inner sums and obtain
    \begin{align}
        \pi^n_*(\xi_M)_{a,b} &= \tang^{\P^n\times\P^n}_{a,b}  \sum_{\substack{\sigma\in\Bij(E\setminus\{\hat e\})\\\sigma(n-1)=a\\\sigma(0)=b}}\left[
            \frac{(\xi_{M\contract\hat e})_\sigma}{\tang^{X_{E\setminus\{\hat e\}}}_\sigma}(1+t_{\hat e}z)\frac{t_{\sigma(k_\sigma)}-t_b}{(t_b-t_{\hat e})(t_{\sigma(k_\sigma)}-t_{\hat e})}+{}\right.\nonumber\\
            &\left.{}\hskip8em+\frac{(\xi_{M\delete\hat e})_\sigma}{\tang^{X_{E\setminus\{\hat e\}}}_\sigma}(1-t_{\hat e}w)\frac{t_a-t_{\sigma(k_\sigma)}}{(t_{\sigma(k_\sigma)}-t_{\hat e})(t_a-t_{\hat e})}
        \right]=\nonumber\\
        &=  \tang^{\P^{n-1}\times\P^{n-1}}_{a,b}  \sum_{\substack{\sigma\in\Bij(E\setminus\{\hat e\})\\\sigma(n-1)=a\\\sigma(0)=b}}\left[
            \frac{(\xi_{M\contract\hat e})_\sigma}{\tang^{X_{E\setminus\{\hat e\}}}_\sigma}(1+t_{\hat e}z)(t_{\hat e}-t_a)\frac{t_{\sigma(k_\sigma)}-t_b}{t_{\sigma(k_\sigma)}-t_{\hat e}}+{}\right.\nonumber\\
            &\left.{}\hskip8em+\frac{(\xi_{M\delete\hat e})_\sigma}{\tang^{X_{E\setminus\{\hat e\}}}_\sigma}(1-t_{\hat e}w)(t_b-t_{\hat e})\frac{t_{\sigma(k_\sigma)}-t_a}{t_{\sigma(k_\sigma)}-t_{\hat e}}
        \right].\label{eq:pushforward-before-subtraction}
    \end{align}
    At this point, let us subtract the right hand side
    \begin{align*}
         &\hphantom{{}={}}(1-t_aw)(t_b-t_{\hat e})\pi^{n-1}_*(\xi_{M\delete \hat e})_{a,b} +(1+t_bz)(t_{\hat e}-t_a)\pi^{n-1}_*(\xi_{M\contract \hat e})_{a,b}=\\
         &=\tang^{\P^{n-1}\times\P^{n-1}}_{a,b}  \sum_{\substack{\sigma\in\Bij(E\setminus\{\hat e\})\\\sigma(n-1)=a\\\sigma(0)=b}}\left[
            \frac{(\xi_{M\contract\hat e})_\sigma}{\tang^{X_{E\setminus\{\hat e\}}}_\sigma}(1+t_bz)(t_{\hat e}-t_a)+\frac{(\xi_{M\delete\hat e})_\sigma}{\tang^{X_{E\setminus\{\hat e\}}}_\sigma}(1-t_aw)(t_b-t_{\hat e})
        \right]
    \end{align*}
    of the deletion-contraction formula we wish to prove from \eqref{eq:pushforward-before-subtraction} and show the result will be zero. It turns out that this resulting ``error sum'' is actually term-wise zero, after we further simplify expressions using Lemma~\ref{lem:classes-with-ksigma}(b):
    \begin{align*}
        &\hphantom{{}={}}\pi^n_*(\xi_M)_{a,b}-\Big((1-t_aw)(t_b-t_{\hat e})\pi^{n-1}_*(\xi_{M\delete \hat e})_{a,b} +(1+t_bz)(t_{\hat e}-t_a)\pi^{n-1}_*(\xi_{M\contract \hat e})_{a,b}\Big)=\\
        &=  \tang^{\P^{n-1}\times\P^{n-1}}_{a,b}  \sum_{\substack{\sigma\in\Bij(E\setminus\{\hat e\})\\\sigma(n-1)=a\\\sigma(0)=b}}\left[
            \frac{(\xi_{M\contract\hat e})_\sigma}{\tang^{X_{E\setminus\{\hat e\}}}_\sigma}\left((1+t_{\hat e}z)(t_{\hat e}-t_a)\frac{t_{\sigma(k_\sigma)}-t_b}{t_{\sigma(k_\sigma)}-t_{\hat e}}  -  (1+t_bz)(t_{\hat e}-t_a) \right)+{}\right.\nonumber\\
            &\left.{}\hskip8em+\frac{(\xi_{M\delete\hat e})_\sigma}{\tang^{X_{E\setminus\{\hat e\}}}_\sigma}\left((1-t_{\hat e}w)(t_b-t_{\hat e})\frac{t_{\sigma(k_\sigma)}-t_a}{t_{\sigma(k_\sigma)}-t_{\hat e}}   -   (1-t_aw)(t_b-t_{\hat e}) \right)
        \right]=\\
        &= \tang^{\P^{n-1}\times\P^{n-1}}_{a,b}  \sum_{\substack{\sigma\in\Bij(E\setminus\{\hat e\})\\\sigma(n-1)=a\\\sigma(0)=b}} \frac{c([\Sbundle^\vee_{M\contract\hat e}],z)_\sigma c([\Qbundle_{M\delete\hat e}],w)_\sigma}{\tang^{X_{E\setminus\{\hat e\}}}_\sigma (t_{\sigma(k_\sigma)}-t_{\hat e})}\cdot{}\\
        &\hskip6em{}\cdot\left[
            (1-t_{\sigma(k_\sigma)}w)(t_{\hat e}-t_a)\Big((1+t_{\hat e}z)(t_{\sigma(k_\sigma)}-t_b) -  (1+t_bz)(t_{\sigma(k_\sigma)}-t_{\hat e})  \Big)+{}\right.\\
            &\left.\hskip6em{}+(1+t_{\sigma(k_\sigma)}z)(t_b-t_{\hat e})\Big( (1-t_{\hat e}w)(t_{\sigma(k_\sigma)}-t_a)    -  (1-t_aw)(t_{\sigma(k_\sigma)}-t_{\hat e})\Big)
        \right].
    \end{align*}
    Finally, it suffices to show that the last expression in brackets is identically  zero, which is easy to verify directly thanks to
    \begin{align*}
        (1+t_{\hat e}z)(t_{\sigma(k_\sigma)}-t_b) -  (1+t_bz)(t_{\sigma(k_\sigma)}-t_{\hat e}) &= -(1+t_{\sigma(k_\sigma)}z)(t_b-t_{\hat e}),\\
        (1-t_{\hat e}w)(t_{\sigma(k_\sigma)}-t_a)    -  (1-t_aw)(t_{\sigma(k_\sigma)}-t_{\hat e}) &= (1-t_{\sigma(k_\sigma)}w)(t_{\hat e}-t_a).\qedhere
    \end{align*}
    \end{itemize}
\end{proof}

\begin{proof}[Proof of Theorem~\ref{thrm:pushforward-graded-chern}]
    From Propositions~\ref{prop:FM-deletion-contraction} and \ref{prop:graded-chern-pushforward-deletion-contraction}, we see that $F_M(-t_a, t_b, z,w)$ and $\pi^n_*(\xi_M)_{a,b}$ satisfy the same deletion-contraction relations. Hence, once we verify their equality on
    base cases, an induction on the cardinality of $E$ will prove the theorem.

    Given $a$, $b$, as long as $E\setminus\{a,b\}$ is non-empty, we choose an $\hat e$ from it and apply the deletion-contraction relations along with the inductive hypothesis to prove the theorem. Thus, as base cases, it suffices to check the cases when $E=\{a,b\}$, which specifically only allows for $n\in\{0,1\}$, i.e. matroids on $1$ or $2$ elements:
    \begin{itemize}
        \item $n=0$. Then $X_E=\point=\P^n\times\P^n$ and the map $\pi^n$ is the identity. Suppose $E=\{e\}$ (so $a=b=e$), then
        \[
        \pi^0_*(\xi_M)_{e,e} = (\xi_M)_{(e)} = \begin{cases}
            1-t_ew, & \text{if $e$ is a loop,}\\
            1+t_e z, & \text{if $e$ is a coloop.}
        \end{cases}
        \]
        On the other hand, we see
        \begin{align*}
            F_m(\alpha,\beta,z,w) &= \begin{cases}
                \frac1{\alpha+\beta}\left((1+\alpha w)(\beta-t_e) + (1-\beta w)(\alpha+t_e)\right), & \text{if $e$ is a loop,}\\
                \frac1{\alpha+\beta}\left((1-\alpha z)(\beta-t_e) + (1+\beta z)(\alpha+t_e)\right), & \text{if $e$ is a coloop}
            \end{cases}\\
            &= \begin{cases}
                1-t_ew, & \text{if $e$ is a loop,}\\
                1+t_ez, & \text{if $e$ is a coloop,}
            \end{cases}
        \end{align*}
        so indeed $F_M(-t_e, t_e, z,w) = \pi^0_*(\xi_M)_{e,e}$.

        \item $n=1$.
        If it were the case that $a=b$, we could choose an $\hat e\in E\setminus\{a\}$ and apply the deletion-contraction relation, so it suffices to check the case when $a\neq b$. In \eqref{eq:pushforward-localization}, the conditions $\tilde\sigma(1)=a$, $\tilde\sigma(0)=b$ completely determine the permutation as $\tilde\sigma=(b,a)$, so 
        \[
            \pi^1_*(\xi_M)_{a,b} = (t_b-t_a)^2\cdot\frac{(\xi_M)_{(b,a)}}{t_b-t_a} = (t_b-t_a)(\xi_M)_{(b,a)}.
        \]
        On the other hand, in $F_M(-t_a, t_b, z,w)$, we are dividing by the non-zero element $t_b-t_a$, so we may ignore zero terms in the sum, i.e. all terms except $S=\{b\}$. Thus we get
        \begin{align*}
            F_M(-t_a, t_b,z,w) &= \frac1{t_b-t_a}\cdot (1+t_az)^{\rk_M(\{b,a\})-\rk_M(\{b\})}
            (1-t_aw)^{\nl_M(\{b,a\})-\nl_M(\{b\})}
            \cdot{}\\&\hskip4em{}\cdot
            (1+t_bz)^{\rk_M(\{b\})}
            (1-t_bw)^{\nl_M(\{b\})}
            \cdot(t_b-t_a)^2.
        \end{align*}
        Since
        \begin{align*}
            (1+t_az)^{\rk_M(\{b,a\})-\rk_M(\{b\})}
            (1-t_aw)^{\nl_M(\{b,a\})-\nl_M(\{b\})} &= \begin{cases}
                1+t_az, &\text{if $a\in B_{(b,a)}(M)$,}\\
                1-t_aw, &\text{if $a\notin B_{(b,a)}(M)$}
            \end{cases}
        \end{align*}
        and similarly
        \begin{align*}
            (1+t_bz)^{\rk_M(\{b\})}
            (1-t_bw)^{\nl_M(\{b\})} &= \begin{cases}
                1+t_bz, &\text{if $b\in B_{(b,a)}(M)$,}\\
                1-t_bw, &\text{if $b\notin B_{(b,a)}(M)$,}
            \end{cases}
        \end{align*}
        we conclude that indeed
        $F_M(-t_a,t_b,z,w) = (t_b-t_a)(\xi_M)_{(b,a)} = \pi^1_*(\xi_M)_{a,b}$.\qedhere
    \end{itemize}
\end{proof}

\section{Equivariant Tutte polynomial}
\label{chapter 4}
In this section, we define a polynomial invariant of matroids that generalizes the usual Tutte polynomial to an equivariant context, and relate it to the pushforward of $c([\Sbundle_M^\vee],z)c([\Qbundle_M],w)$. We are motivated in this by a non-equivariant result of \cite{tautological-classes}, Theorem A. There the authors prove that
\begin{multline}
\label{tautological pushforward}
    \int_{\Pi_n} c([\Sbundle_{U_{n,E}}^\vee],x)c([\Qbundle_{U_{1,E}}],y)c([\Sbundle_M^\vee],z)c([\Qbundle_M],w)=\\=\frac{(y+z)^{\rank(M)}(x+w)^{\crk(M)}}{x+y}T_M\left(\frac{x+y}{y+z},\frac{x+y}{x+w}\right),
\end{multline}
where $\int_{X_E}$ denotes the degree map (i.e. pushforward to a point) and everything is considered non-equivariantly.

The left-hand side can be viewed as a non-equivariant analog of $F_M(\alpha,\beta,z,w)$. As we proved in Section~\ref{sec:push}, the polynomial $F_M$ is the pushforward of $c([\Sbundle_M^\vee],z)c([\Qbundle_M],w)$ to $\P^n\times \P^n$, with the variables $\alpha$, $\beta$ corresponding to pullbacks of hyperplanes coming from the two projective spaces. In contrast, \eqref{tautological pushforward} is obtained by first intersecting with pullbacks of $\alpha$, $\beta$ to $X_E$ a number of times and then pushing forward to a point. Since non-equivariantly intersecting with $\alpha$, $\beta$ to a power higher than the dimension of a subvariety will give zero, one may think of the repeated intersection with $\alpha$, $\beta$ as a way to eliminate terms of high degree, while the degree map annihilates terms of low degree. Hence, varying the exponents $i$, $j$ in $\alpha^i,\beta^j$ will precisely give the coefficients of the pushforward of $c([\Qbundle_M],z)c([\Sbundle_M],w)$ in the non-equivariant cohomology ring of $\P^n\times \P^n$ extended with two formal variables $z,w$, namely in $\left(\Z[r,s]/( r^{n+1},s^{n+1})\right)[z,w]$.
These are therefore precisely the coefficients of the non-equivariant version of $F_M(r,s,z,w)$ read as a polynomial in $r$, $s$, i.e. the polynomial in $r$, $s$ that we obtain when we substitute all $t_e=0$ in the $F_M$. However, note that the grading is reversed: The term that comes from intersecting with $\alpha^i\beta^j$ will give the coefficient of $r^{n+1-i}s^{n+1-j}$. Therefore we will reverse the grading of the first two variables $r$, $s$ in $F_M(r,s,z,w)$ by replacing them by $\frac{1}{r},\frac{1}{s}$ and cancelling the denominators. In other words, we will now consider \begin{align*}
    (rs)^{|E|-1}F_M\left(\frac{1}{r},\frac{1}{s},z,w\right)\in \Z[E][r,s,z,w].
\end{align*}

From \eqref{tautological pushforward} we know that after substituting all $t_e=0$ this will exactly be \begin{align*}
   \left.(rs)^{|E|-1}F_M\left(\frac{1}{r},\frac{1}{s},z,w\right)\right|_{t_e = 0,e\in E}= \frac{(s+z)^{\rank(M)}(r+w)^{\corank(M)}}{r+s}T_M\left(\frac{r+s}{s+z},\frac{r+s}{r+w}\right).
\end{align*}
We will now use this to motivate a definition of an equivariant verion of the Tutte polynomial of a matroid $M$ by not substituting the $t_e$-variables to zero. The polynomial $\widehat{T}_M(x,y,r,s)$ that we want to define should therefore satisfy \begin{align}
\label{want equivariant tutte}
\widehat{T}_M\left(\frac{r+s}{s+z},\frac{r+s}{r+w},r,s\right)&=(rs)^{|E|-1}    \frac{r+s} {(s+z)^{\rank(M)}(r+w)^{\corank(M)}}F_M\left(\frac{1}{r},\frac{1}{s},z,w\right).
\end{align}
Unraveling the closed form of $F_M$, a straightforward manipulation reveals the following: 
\begin{align*}
    F_M\left(\frac{1}{r}, \frac{1}{s}, z , w\right) &= 
    \frac{1}{\frac{1}{r}+\frac{1}{s}} \sum_{S\subset E} \bigg[\left(1+\frac{z}{s}\right)^{\rank_M(S)} \left(1-\frac{w}{s}\right)^{\nullity_M(S)}
        \left(1-\frac{z}{r}\right)^{\rank(M)-\rank_M(S)}
        \\
        &\hskip3cm\left(1+\frac{w}{r}\right)^{\corank(M)-\nullity_M(S)}\prod_{e\notin S}\left(\frac{1}{s}-t_e\right)
        \prod_{e\in S} \left(t_e+\frac{1}{r}\right)\bigg]\\
    &=\frac{rs}{r + s}\cdot \frac{1}{(rs)^{|E|}}\sum_{S\subset E} \bigg[(s+z)^{\rank_M(S)}(s-w)^{\nullity_M(S)}
        \left(r-z\right)^{\rank(M)-\rank_M(S)}
        \\&\hskip4cm\left(r+w\right)^{\corank(M)-\nullity_M(S)}
        \prod_{e\notin S}\left(1-s t_e\right)
        \prod_{e\in S} \left(1+r t_e\right)\bigg]\\
     &= \frac{(s + z)^{\rank(M)}
         (r + w)^{\corank(M)}}{(rs)^{|E|-1}(r + s)} \sum_{S\subset E}\bigg[
         \left(\frac{r-z}{s + z}\right)^{\rank(M)-\rank_M(S)}
         \left(\frac{s-w}{r + w}\right)^{\nullity_M(S)}\\
        &\hskip6cm\prod_{e\notin S} \left(1-s t_e\right)
        \prod_{e\in S} \left(1+r t_e\right)\bigg]\\
     &= \frac{(s + z)^{\rank(M)}
         (r + w)^{\corank(M)}}{(rs)^{|E|-1}(r + s)} \sum_{S\subset E} \bigg[
         \left(\frac{r+s}{s + z}-1\right)^{\rank(M)-\rank_M(S)}
         \left(\frac{r+s}{r + w}-1\right)^{\nullity_M(S)}\\
         &\hskip6cm\prod_{e\notin S} \left(1-s t_e\right)
         \prod_{e\in S} \left(1+r t_e\right)\bigg].
\end{align*}
Hence, after substituting back into \eqref{want equivariant tutte}, all the factors cancel and we get
\begin{align*}
    \widehat{T}_M\left(\frac{r+s}{s+z},\frac{r+s}{r+w},r,s\right)&=\sum_{S\subset E}\bigg[
         \left(\frac{r+s}{s + z}-1\right)^{\rank(M)-\rank_M(S)}
         \left(\frac{r+s}{r + w}-1\right)^{\nullity_M(S)}\\&\hskip4cm
         \prod_{e\notin S} \left(1-s t_e\right)
        \prod_{e\in S} \left(1+r t_e\right)\bigg].
\end{align*}
Therefore, we propose the following definition of an equivariant version of the Tutte polynomial associated to a matroid $M$:
\begin{definition}[Equivariant Tutte polynomial]
    \label{equivariant-tutte-definition}
    Let $M$ be a matroid with ground set $E$. Then we define the \emph{equivariant Tutte polynomial} of $M$ as the following four-variable polynomial with coefficients in $\Z[E]$:
    $$
        \widehat{T}_M(x,y,r,s) = \sum_{S\subseteq E} (x-1)^{\rank(M)-\rank_M(S)}(y-1)^{\nullity_M(S)}\prod_{e\in S}(1+r t_e) \prod_{e\notin S} (1+s t_e).
    $$
    By convention, we set $ \widehat{T}_M(x,y,r,s)=1\in \Z$ for $M$ supported on the empty ground set.
\end{definition}
Note that setting either $r$ and $s$, or $t_e$ for all $e$ to zero recovers the usual Tutte polynomial.

For a positive real parameter $q$, the \emph{multivariate Tutte polynomial}
\begin{equation}
\label{potts}
\widetilde{Z}_M(q,\textbf{v})=q^{-\rank(M)}\widehat{T}_M(q+1,2,1,0)|_{t_e:=v_e-1}\in \Z[v_e\mid e\in E]
\end{equation}
 has already been studied in \cite{PottsSokal} and is related to the Potts model in statistical physics. In \cite[Theorem 4.10]{LorentzianPolynomials} the authors show that for $0<q\leq 1$ the homogenization of \eqref{potts} is a Lorentzian polynomial. \\
 Similarly we can obtain the equivariant Tutte polynomial from $\widetilde{Z}_M(q,\textbf{v})$, since both polynomials completely recover the  matroid $M$: \\
 \begin{align*}
     \widehat{T}_M(x,y,r,s)=(x-1)^{\rank(M)}\left(\prod_{e \in E}(1+st_e)\right)\widetilde{Z}_M((x-1)(y-1),((1+rt_e)(y-1)(1+st_e)^{-1})_{e \in E})
 \end{align*}
 In this way we give a geometric interpretation to the multivariate Tutte polynomial of \cite{PottsSokal}.

The usual Tutte polynomial satisfies a number of notable properties, for example passing to the dual matroid corresponds to exchanging the variables. Further, the Tutte polynomial is also multiplicative under direct sum of matroids. It turns out that the equivariant Tutte polynomial satisfies similar properties.
\begin{proposition}[dual matroid, sum of matroids]
    Let $M$ and $N$ be matroids and $M^*$ the dual of $M$. Then
    \begin{align*}
        \widehat{T}_{M^*}(x,y,r,s) &= \widehat{T}_M(y,x,s,r),\\
        \widehat{T}_{M\oplus N} &= \widehat{T}_M\cdot \widehat{T}_N.
    \end{align*}
\end{proposition}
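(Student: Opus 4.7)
The plan is to prove both identities by direct manipulation of the defining formula in Definition~\ref{equivariant-tutte-definition}, exploiting two standard facts: (i) the rank function of the dual matroid is given by $\rank_{M^*}(S)=|S|+\rank_M(E\setminus S)-\rank(M)$, and (ii) for a direct sum, both the rank and nullity split additively over the two ground sets.

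For the duality identity, I would first use fact (i) to compute the two exponents that appear in the summand of $\widehat{T}_{M^*}$: a short calculation gives
\[
\rank(M^*)-\rank_{M^*}(S)=\nullity_M(E\setminus S),\qquad \nullity_{M^*}(S)=\rank(M)-\rank_M(E\setminus S).
\]
Substituting these into the defining sum for $\widehat{T}_{M^*}(x,y,r,s)$ and then reindexing by $T:=E\setminus S$, which swaps the roles of ``$e\in S$'' and ``$e\notin S$'' (and thus swaps the factors $(1+rt_e)$ and $(1+st_e)$), turns the sum into
\[
\sum_{T\subseteq E}(x-1)^{\nullity_M(T)}(y-1)^{\rank(M)-\rank_M(T)}\prod_{e\in T}(1+st_e)\prod_{e\notin T}(1+rt_e),
\]
which I recognize as exactly $\widehat{T}_M(y,x,s,r)$ upon comparing with the definition. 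The only subtlety is keeping the book-keeping straight, but no creative step is needed.

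For the direct sum identity, I would use the natural bijection between subsets $S\subseteq E_M\sqcup E_N$ and pairs $(S_M,S_N)$ with $S_M\subseteq E_M$, $S_N\subseteq E_N$, together with $\rank_{M\oplus N}(S)=\rank_M(S_M)+\rank_N(S_N)$ and $\nullity_{M\oplus N}(S)=\nullity_M(S_M)+\nullity_N(S_N)$, as well as $\rank(M\oplus N)=\rank(M)+\rank(N)$. Because the product $\prod_{e\in S}(1+rt_e)\prod_{e\notin S}(1+st_e)$ also splits as the product of the corresponding expressions on $E_M$ and $E_N$ (the variables $t_e$ on the two ground sets are disjoint and commute), every factor in the summand is a product of an $M$-factor and an $N$-factor. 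The sum therefore factors as
\[
\widehat{T}_{M\oplus N}(x,y,r,s)=\left(\sum_{S_M\subseteq E_M}\cdots\right)\left(\sum_{S_N\subseteq E_N}\cdots\right)=\widehat{T}_M(x,y,r,s)\cdot\widehat{T}_N(x,y,r,s).
\]

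Neither argument presents a genuine obstacle; both are formal manipulations of the summation defining $\widehat{T}_M$. The only point requiring a bit of care is the reindexing in the duality proof, where one must make sure that swapping $S\leftrightarrow E\setminus S$ correctly interchanges the roles of the $(1+rt_e)$ and $(1+st_e)$ factors, which is precisely what forces the swap of $r$ and $s$ (in addition to $x$ and $y$) in the statement.
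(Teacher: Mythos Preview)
Your proposal is correct and follows essentially the same approach as the paper: both parts are proved by direct manipulation of the defining sum, using the dual rank formula and complement reindexing for the first identity, and the additivity of rank/nullity together with the subset bijection for the second.
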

\begin{proof}
We will prove the claim about the dual matroid first.
    Since $\rank_{M^*}(S) = |S| + \rank_M(E\setminus S) - \rank(M)$, we have
    \begin{align*}
        \rk(M^*)-\rk_{M^*}(S) &= |E\setminus S|-\rk_M(E\setminus S) = \nl_M(E\setminus S)\\
        \intertext{and}
        \nl_{M^*}(S) &= \rk(M)-\rk_M(E\setminus S)
    \end{align*}
    and so
    \begin{align*}
        \widehat T_{M^*}(x,y,r,s) &= \sum_{S\subseteq E} (x-1)^{\rank(M^*)-\rank_{M^*}(S)}(y-1)^{\nl_{M^*}(S)}\prod_{e\in S}(1+r t_e) \prod_{e\notin S} (1+s t_e) =\\
        &= \sum_{S\subseteq E} (y-1)^{\rk(M)-\rk_M(E\setminus S)} (x-1)^{\nl_M(E\setminus S)} \prod_{e\in E\setminus S}(1+st_e)\prod_{e\notin E\setminus S}(1+rt_e) =\\&= \widehat T_M(x,y,s,r).
    \end{align*}
    In the last equality we used the fact that summing over subsets is the same as summing over their complements.

Now we will prove the second claim for matroids $M$ and $N$. Set $P=M\oplus N$. If the ground sets of $M$ and $N$ are $E$ and $F$ respectively, then the ground set of $P$ may be identified with the disjoint union $E\cup F$.
If we consider $S\subseteq E\cup F$ and denote $S_1=S\cap E$, $S_2=S\cap F$, then $\rk_P(S) = \rk_M(S_1)+\rk_N(S_2)$. As a special case, this means $\rank(P)=\rank(M)+\rank(N)$. Lastly, since cardinality is additive with respect to disjoint unions, we also have
\[
\nl_P(S) = \nl_M(S_1)+\nl_N(S_2).
\]
Thus we obtain
\begin{align*}
     \widehat{T}_P(x,y,r,s) &=  \sum_{S\subseteq E\cup F} (x-1)^{\rank(P)-\rank_P(S)}(y-1)^{\nl_P(S)}\prod_{e\in S}(1+r t_e) \prod_{e\in (E\cup F)\setminus S} (1+s t_e) \\
   &=  \left(\sum_{S_1\subseteq E} (x-1)^{\rank(M)-\rank_M(S_1)}(y-1)^{\nl_M(S_1)}\prod_{e\in S_1}(1+r t_e) \prod_{e\in E\setminus S_1} (1+s t_e)\right) \cdot\\
   &\quad{}\cdot \left(\sum_{S_2\subseteq F} (x-1)^{\rank(N)-\rank_N(S_2)}(y-1)^{\nl_N(S_2)}\prod_{e\in S_2}(1+r t_e) \prod_{e\in F\setminus S_2} (1+s t_e)\right)\\
   &= \widehat T_M(x,y,r,s)\cdot \widehat T_N(x,y,r,s).
\end{align*}
The second equality holds because of the additive behaviors above and the bijective correspondence between subsets $S\subseteq E\cup F$ and pairs of subsets $S_1\subseteq E$, $S_2\subseteq F$.
\end{proof}

One of the definitions of the usual Tutte polynomial is by a deletion-contraction relation. Our equivariant Tutte polynomial satisfies a similar relation. In our case, the relation involves expressions that are linear in $t_e$, where $e$ is the element being deleted-contracted. Once again, by setting all $t_e$ to zero, one recovers the deletion-contraction relation of the usual Tutte polynomial.

\begin{proposition}[Deletion-contraction relation of equivariant Tutte polynomial]
\label{deletion-contraction for equivariant tutte}
The equivariant Tutte polynomial of a matroid $M$ satisfies the following relation with respect to deleting or contracting an edge $\hat{e}$ depending on whether $\hat{e}$ is a general element, a loop or a coloop:
\begin{enumerate}
\item If $\hat{e}$ is a general element, then
\[
\widehat{T}_M=(1+s t_{\hat{e}})\cdot\widehat{T}_{M \delete {\hat{e}}}+(1+rt_{\hat{e}})\cdot\widehat{T}_{M\contract{\hat{e}}}.
\]
    \item  
 If $\hat{e}\in E$ is a loop, then
\[
\widehat{T}_M=\Big((y-1)(1+r t_{\hat{e}})+(1+s t_{\hat{e}})\Big)\cdot \widehat{T}_{M \delete {\hat{e}}}.
\]
\item If ${\hat{e}}\in E$ is a coloop, then
\[
\widehat{T}_M=\Big((x-1)(1+s t_{\hat{e}})+(1+r t_{\hat{e}})\Big) \cdot\widehat{T}_{M  \contract  {\hat{e}}}.
\]
\end{enumerate}
\end{proposition}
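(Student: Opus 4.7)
The plan is to mimic the standard rank-generating-function proof of the usual Tutte polynomial deletion-contraction relation, carefully tracking the new factors $(1+rt_e)$ and $(1+st_e)$ and the changes in rank and nullity when we pass to $M\delete\hat e$ or $M\contract\hat e$.

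The first step is to split the defining sum of $\widehat T_M$ into two subsums according to whether $\hat e\in S$ or $\hat e\notin S$, and write $S=S'\sqcup\{\hat e\}$ in the former case, so that $S'$ runs over subsets of $E\setminus\{\hat e\}$. In both pieces the unique $\hat e$-contribution is a single linear factor: $(1+st_{\hat e})$ from $\prod_{e\notin S}(1+st_e)$ when $\hat e\notin S$, and $(1+rt_{\hat e})$ from $\prod_{e\in S}(1+rt_e)$ when $\hat e\in S$. These can be pulled out of the subsum.

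Next I would record the standard rank-nullity identities (they are exactly those used in the proof of Proposition~\ref{prop:FM-deletion-contraction}):
\begin{itemize}
    \item For $\hat e\notin S\subseteq E$ one has $\rk_M(S)=\rk_{M\delete\hat e}(S)$ and $\nl_M(S)=\nl_{M\delete\hat e}(S)$.
    \item For $\hat e\in S\subseteq E$, with $S'=S\setminus\{\hat e\}$, one has $\rk_M(S)=\rk_{M\contract\hat e}(S')+1$ and $\nl_M(S)=\nl_{M\contract\hat e}(S')$.
    \item If $\hat e$ is general, then $\rk(M)=\rk(M\delete\hat e)=\rk(M\contract\hat e)+1$.
    \item If $\hat e$ is a loop, then $\rk(M)=\rk(M\delete\hat e)$, and for $\hat e\in S$ one has $\rk_M(S)=\rk_{M\delete\hat e}(S')$, $\nl_M(S)=\nl_{M\delete\hat e}(S')+1$.
    \item If $\hat e$ is a coloop, then $\rk(M)=\rk(M\contract\hat e)+1=\rk(M\delete\hat e)+1$, and for $\hat e\notin S$ one has $\rk(M)-\rk_M(S)=\rk(M\contract\hat e)-\rk_{M\contract\hat e}(S)+1$ while $\nl_M(S)=\nl_{M\contract\hat e}(S)$.
\end{itemize}
Substituting these into the two subsums immediately identifies each subsum with a copy of $\widehat T_{M\delete\hat e}$ or $\widehat T_{M\contract\hat e}$, multiplied by the appropriate linear factor in $t_{\hat e}$ and, in the loop/coloop cases, by an extra $(y-1)$ or $(x-1)$ coming from the nullity/rank bump.

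Putting the pieces together yields the three claimed formulas. If $\hat e$ is general, the $\hat e\notin S$ part gives $(1+st_{\hat e})\widehat T_{M\delete\hat e}$ and the $\hat e\in S$ part gives $(1+rt_{\hat e})\widehat T_{M\contract\hat e}$. If $\hat e$ is a loop, both parts recombine to $\widehat T_{M\delete\hat e}$ times $(1+st_{\hat e})+(y-1)(1+rt_{\hat e})$. If $\hat e$ is a coloop, both parts recombine to $\widehat T_{M\contract\hat e}$ times $(1+rt_{\hat e})+(x-1)(1+st_{\hat e})$. The whole argument is routine bookkeeping; the only mild obstacle is to remember to apply the rank/nullity identities to the \emph{global} rank of $M$ (not just to $\rk_M(S)$) in the loop and coloop cases, since otherwise the extra $(x-1)$ or $(y-1)$ factor is easy to miss.
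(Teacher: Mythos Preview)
Your proposal is correct and follows essentially the same approach as the paper: both split the defining sum of $\widehat T_M$ according to whether $\hat e\in S$ or $\hat e\notin S$, pull out the linear factor $(1+rt_{\hat e})$ or $(1+st_{\hat e})$, and use the standard rank/nullity identities to identify each subsum with $\widehat T_{M\delete\hat e}$ or $\widehat T_{M\contract\hat e}$ (picking up the extra $(x-1)$ or $(y-1)$ in the coloop/loop cases). Your presentation is slightly more systematic in listing all the needed rank identities up front, but the argument is the same.
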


\begin{proof}
First we resolve the case when ${\hat{e}}$ is a general element.  We split the sum indexed by $S\subseteq E$ in the definition of the equivariant Tutte polynomial according to whether ${\hat{e}}$ belongs to $S$ or not:
\begin{align}\label{delcon}
        \widehat{T}_M(x,y,r,s) &= \sum_{\substack{S\subseteq E \\ {\hat{e}}\notin S} } (x-1)^{\rank(M)-\rank_M(S)}(y-1)^{\nullity_M(S)}\prod_{e\in S}(1+r t_e) \prod_{e\notin S} (1+s t_e) +{} \\ \nonumber
        &\quad{}+\sum_{\substack{S\subseteq E \\ {\hat{e}}\in S} } (x-1)^{\rank(M)-\rank_M(S)}(y-1)^{\nullity_M(S)}\prod_{e\in S}(1+r t_e) \prod_{e\notin S} (1+s t_e).
\end{align}
If ${\hat{e}}\notin S$, then we can identify $S$ as a subset in the matroid $M\delete {\hat{e}}$. Note that $\rank(M \delete {\hat{e}} )=\rank(M)$ and $\rank_{M\delete\hat e}$ is merely a restriction of $\rank_M$, so if we compare the first sum in \eqref{delcon} to the definition of $\widehat T_{M\delete\hat e}$, we see that exponents are the same, meaning the only difference is the additional factor $1+st_e$. Thus the first sum adds up to $(1+s t_{\hat{e}})\widehat{T}_{M\setminus {\hat{e}}}$. 

Now we will similarly prove that the second sum in~\eqref{delcon} is equal to $(1+r t_{\hat{e}})\widehat{T}_{M/{\hat{e}}}$. To see this, note that the rank of $M$ decreases by one after contraction of ${\hat{e}}$. Under the natural identification of subsets of $S$ containing ${\hat{e}}$ of $M$ with subsets $S':=S\setminus\{\hat e\}$ of $M/{\hat{e}}$, we further have $|S|-1=|S'|$, $\rank_M(S)-1=\rank_{M/{\hat{e}}}(S')$. So as above, in comparing the second sum in~\eqref{delcon} to the definition of $\widehat T_{M\contract\hat e}$, the expressions differ only by an additional factor of $1+rt_e$, meaning we obtain $(1+rt_e)\widehat T_{M\contract\hat e}$. Adding the two sums yields our claim (a).

Next, let us assume that ${\hat{e}}$ is a loop. Then we have $\rank(M \delete {\hat{e}} )=\rank(M)$. Also, if ${\hat{e}}\notin S$, then $\rank_M(S)=\rank_{M\setminus {\hat{e}}}(S)$, so the first sum in \eqref{delcon}  is equal to $(1+s t_{\hat{e}})\widehat{T}_{M / {\hat{e}}}$ as before. As to the second sum, we can naturally identify a subset $S$ of $M$ containing ${\hat{e}}$ with the subset $S':=S\setminus \{\hat e\}$ of $M\setminus {\hat{e}}$. We then have $\rank_M(S)=\rank_{M\setminus \hat{e}}(S')$ but $|S|-1=|S'|$. Using this, we see that compared to the definition of $\widehat T_{M\delete\hat e}$, the second sum in \eqref{delcon} differs in the exponent of $y-1$ being greater by $1$ as well as in an additional factor of $1+rt_e$, so the second sum adds up to $(y-1)(1+rt_e)\widehat T_{M\delete\hat e}$.
Adding the two results together, we obtain our claim (b).

The case of a coloop proceeds analogously, so we omit it.
\end{proof}

Our next goal is to prove that the assignment $M\mapsto \widehat{T}_M(x,y,r,s)\in \Z[E][x,y,r,s]$ is valuative, i.e. is compatible with subdivisions of the base polytope of $M$ into base polytopes of smaller matroids. We quickly recall the following definition.
\begin{definition}
    \begin{enumerate}
        \item Let $M$ be a matroid of rank $r$ on the groundset $E$, then the \emph{base polytope of} $M$ is by definition \begin{align*}
            P(M):=\conv\left(\left\{\sum_{b\in B}e_b \mid B\text{ is a basis of } M \right\}\right)\subset \R^E
        \end{align*}
        where $e_b\in \R^E$ denotes the $b$-th standard basis vector. This polytope is contained in the affine subspace of vectors with coordinate sum $r$. We denote by $\mathbbm{1}_{P(M)}: \R^E\to \{0,1\}$ its indicator function which takes value one exactly when the input belongs to $P(M)$.
        \item Fix $r\in \N_0$, a finite set $E$ and an abelian group $G$. Denote by $\text{Mat}_r(E)$ the free abelian group generated by rank $r$ matroids on $E$. A group morphism $f:\text{Mat}_r(E)\to G$ is said to be \emph{valuative} if for every element $\sum_{M}z_MM\in \text{Mat}_r(M)$, where the sum ranges over all rank $r$ matroids on $E$ and $z_M\in\Z$ are integers, we have the following implication: \begin{align*}
            \sum_{M}z_M\mathbbm{1}_{P(M)}\equiv 0 \text{ on all of }\R^n \implies f\left(\sum_M z_MM\right)=0\in G
        \end{align*}
    \end{enumerate}
\end{definition}
For more details on valuativity we refer to \cite{Stel}. Here we will only prove the following result which follows from a valuativity result shown in \cite{tautological-classes}.
\begin{proposition}
    \label{valuativity}
    The map $M\mapsto \widehat{T}_M(x,y,r,s)\in \Z[E][x,y,r,s]$ assigning to a matroid its equivariant Tutte polynomial extends to a valuative group morphism $\text{Mat}_r(E)\to \Z[E][x,y,r,s]$ for every $r\in \N_0$ and every finite set $E$. The same is true  for the multivariate Tutte polynomial $\widetilde{Z}_M(q,\textbf{v})$ as well as any of the entries of table \ref{tab:eval_of_tutte} in Section \ref{sec:evaluations}, which are further explained in Theorem \ref{thm:evaluations}.
\end{proposition}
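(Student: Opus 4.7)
The plan is to deduce valuativity of the equivariant Tutte polynomial from the valuativity result for the tautological Chern class product $\xi_M := c([\Sbundle_M^\vee],z)c([\Qbundle_M],w)$ established in \cite{tautological-classes}, transferring it first through the pushforward computation of Theorem~\ref{thrm:pushforward-graded-chern} and then through the substitution used in the motivating discussion of Section~\ref{chapter 4}.

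First I would invoke from \cite{tautological-classes} that the map $M\mapsto \xi_M\in H_T(\Pi_n)[z,w]$ is valuative. Since the equivariant Gysin pushforward $\pi^n_*$ is $\Z$-linear on cohomology, the composition $M\mapsto \pi^n_*(\xi_M)\in H_T(\P^n\times \P^n)[z,w]$ is valuative as well. By Theorem~\ref{thrm:pushforward-graded-chern}, this pushforward is represented in the presentation \eqref{eq:middle-guy-cohomology} by the explicit polynomial $F_M(\alpha,\beta,z,w)$. A direct degree inspection of the closed formula for $F_M$ shows that each summand has $\alpha$- and $\beta$-degree exactly $|E|$ before dividing by $\alpha+\beta$; hence $F_M$ has bidegree at most $(n,n)$ in $(\alpha,\beta)$. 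On polynomials of such bounded bidegree, the reduction $\Z[E][\alpha,\beta,z,w]\to H_T(\P^n\times\P^n)[z,w]$ is injective, so valuativity lifts from the cohomology quotient to a valuativity statement for the polynomial $F_M$ itself.

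Next I would transfer this to $\widehat{T}_M$ using the identity derived in Section~\ref{chapter 4}, which after clearing common denominators becomes the polynomial identity
\[
(s+z)^{\rank(M)}(r+w)^{\corank(M)}\,\widehat{T}_M\!\left(\frac{r+s}{s+z},\frac{r+s}{r+w},r,s\right) = (rs)^{|E|-1}(r+s)\cdot F_M\!\left(\frac{1}{r},\frac{1}{s},z,w\right)
\]
in $\Z[E][r,s,z,w]$. For matroids of fixed rank $r$ on $E$, every factor in this identity except $F_M$ and $\widehat{T}_M$ is independent of $M$. Hence a valuative relation $\sum_M z_M M=0$, combined with valuativity of $F_M$, forces $\sum_M z_M(s+z)^{r}(r+w)^{|E|-r}\widehat{T}_M(\tfrac{r+s}{s+z},\tfrac{r+s}{r+w},r,s)=0$ in $\Z[E][r,s,z,w]$; since this ring is a domain, we may divide by the nonzero element $(s+z)^r(r+w)^{|E|-r}$ in its fraction field to conclude that $\sum_M z_M\widehat{T}_M(\tfrac{r+s}{s+z},\tfrac{r+s}{r+w},r,s)=0$.

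Finally, the substitution $x=(r+s)/(s+z),\, y=(r+s)/(r+w)$ is invertible over $\Z[E][r,s]$ (one can solve $z=(r+s)/x-s$ and $w=(r+s)/y-r$), so the corresponding evaluation homomorphism $\Z[E][x,y,r,s]\to \mathrm{Frac}\,\Z[E][r,s,z,w]$ is injective. Therefore the vanishing after substitution forces the polynomial identity $\sum_M z_M\widehat{T}_M(x,y,r,s)=0$, which is precisely the valuativity we want. The last assertion about evaluations (and thus about Table~\ref{tab:eval_of_tutte}) follows formally, since valuativity is preserved under post-composition with any ring homomorphism out of $\Z[E][x,y,r,s]$. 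The main obstacle I anticipate is the degree-bookkeeping step needed to lift valuativity of the cohomology class $\pi^n_*(\xi_M)$ to valuativity of its canonical polynomial representative $F_M$; once this lift is in place, the remaining arguments are essentially formal.
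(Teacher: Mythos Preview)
Your proposal is correct and follows essentially the same route as the paper: valuativity of $\xi_M$ from \cite{tautological-classes}, transferred through $\pi^n_*$ to $F_M$, then through the rank-dependent prefactors and the invertible substitution $(z,w)\leftrightarrow(x,y)$ of Section~\ref{chapter 4}. Your explicit degree argument showing $F_M$ has bidegree at most $(n,n)$ in $(\alpha,\beta)$, and hence that valuativity lifts from the cohomology quotient to the polynomial representative, is a point the paper leaves implicit.
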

\begin{proof}
   Proposition 5.6 from \cite{tautological-classes} shows that the assignment $M\mapsto c([\Sbundle^\vee],a)c([\Qbundle_M],b)$ extends to a valuative morphism. Since the pushforward map is additive, we conclude that the same holds for the pushforward of this product to $\P^n\times \P^n$, that is for the assignment $M\mapsto F_M(a,b,z,w)\in H_T(\P^n\times \P^n)[a,b]$. Any sort of substitution preserves this property, but to obtain $\widehat{T}_M$ we also need to multiply by some factor. For this to preserve valuativity it is important that this factor does not depend on the matroid $M$ but only on its rank and on $E$, which are both fixed. The first step to get $\widehat{T}_M$ from $F_M$ was to revert the grading of the first two variables. To do so we substitute $a\to \frac{1}{r}, b \to \frac{1}{s}$ and then multiply by $(rs)^{|E|-1}$. Since this factor is independent of $M$ we see that the assignment $M\mapsto (rs)^{|E|-1}F_M(\frac{1}{r},\frac{1}{s},z,w)\in \Z[E][r,s,z,w]$ extends to a valuative group morphism. Next note that by \eqref{want equivariant tutte} \begin{align*}
      \widehat{T}_M\left(\frac{r+s}{s+z},\frac{r+s}{r+w},r,s\right)&=(rs)^{|E|-1}    \frac{r+s} {(s+z)^{\rank(M)}(r+w)^{\corank(M)}}F_M\left(\frac{1}{r},\frac{1}{s},z,w\right).
   \end{align*}
   Since the factor $\frac{r+s} {(s+z)^{\rank(M)}(r+w)^{\corank(M)}}$ depends only on the rank of $M$ and on $E$, this is still valuative. Lastly we perform the substitution $z\to \frac{r+s-sx}{x},w\to \frac{r+s-ry}{y}$ to obtain $\widehat{T}_M(x,y,r,s)$. As substitutions preserve valuativity, we get the desired result. \\
   The valuativity of $\widetilde{Z}_M(q,\textbf{v})$ and the functions in table \ref{tab:eval_of_tutte} now follows immediately since these are obtained via substitutions of $\widehat{T}_M(x,y,r,s)$, reading of coefficients and by multiplying by factors depending only on the rank of $M$.
\end{proof}

To close out this section, we examine some particular examples of the equivariant Tutte polynomial in families of matroids where we can express it in a simple form.
\begin{example}
    \label{all-loops-coloops}
    Let us a consider a matroid $M$ on ground set $E$ that contains no general elements, i.e. is a collection of loops and coloops. Denoting its set of loops as $L\subseteq E$, an easy induction via Proposition~\ref{deletion-contraction for equivariant tutte} (b), (c) then yields
    \[
        \widehat T_M(x,y,r,s)= \prod_{e\in L}\bigl(y+(yr-r+s)t_e\bigr)\prod_{e\notin L}\bigl(x+(xs-s+r)t_e\bigr).
    \]
\end{example}

\begin{example}
\label{Example:cycle}
    Next we express the equivariant Tutte polynomial of the corank $1$ uniform matroid $M=U_{n,E}$ on ground set $E$ of size $n+1$, that is the graphic matroid coming from a cycle of length $n+1$. In Definition~\ref{equivariant-tutte-definition}, each term corresponding to $S\subseteq E$ depends just on the set itself, its rank $\rk_M(S)$ and the overall rank $\rk(M)$. The rank function of $U_{n,E}$ agrees with that of $U_{n+1,E}$ for all $S$ except $S=E$, while the overall rank is smaller by $1$. Further, we know the equivariant Tutte polynomial of $U_{n+1,E}$ from the previous example, since it is just a collection of coloops. So, accounting for these discrepancies, we get 
    \begin{align*}
        \widehat T_{M} &= \frac{\widehat T_{U_{n+1,E}} - (x-1)^{0}(y-1)^0\prod_{e\in E}(1+rt_e)}{x-1} + (x-1)^0(y-1)^1\prod_{e\in E}(1+rt_e) =\\
        &= \frac1{x-1}\left(\prod_{e\in E} \bigl(x+(xs-s+r)t_e\bigr)-\prod_{e\in E}(1+rt_e)\right) + (y-1)\prod_{e\in E}(1+rt_e).
        \end{align*}
        Notice that uniform matroids are preserved under relabeling their groundset, hence this polynomial has to be symmetric in the $t_e$-variables. Indeed, if we factor out the product above to write it in terms of the monomial basis in the $t_e$-variables, we see that for each $A\subseteq E$ the coefficient of the monomial $t_A:=\prod_{e\in A}t_e$ only depends on $|A|$. This makes it easy to write the expression in terms of elementary symmetric polynomials.
        \begin{align*}
        \widehat T_{M}&=\sum_{A\subseteq E} \left(\frac{(xs-s+r)^{|A|}x^{|E\setminus A|}-r^{|A|}}{x-1}+(y-1)r^{|A|}\right)\prod_{e\in A}t_e =\\
        &=\sum_{k=0}^{n+1}\left(\frac{(xs-s+r)^{k}x^{n+1-k}-r^{k}}{x-1}+(y-1)r^{k}\right)\elem_k(\{t_e\}_{e\in E}).
    \end{align*}
    
\end{example}

\section{Equivariant Tutte-Grothendieck}
\label{section:equi-T_G}
Our tool to understand the pushforward of $c([Q_M],w)c([S_M^\vee],z)$ was by analyzing how these invariants change under deletion and contraction of an element in $M$. In general there is a way to express a matroid invariant that satisfies certain deletion-contraction-relations in a closed form. The classically known way to do this is via the following result from (\cite{tutte and applications}, Section 6.2).
\begin{proposition}[Generalized Tutte-Grothendieck invariants]
\label{classical T-G}
Associated to a matroid $M$ with finite ground set $E$, there is a unique polynomial $G_M(u,v,a,b,\gamma)\in \Z[u,v,a,b,\gamma]$ that satisfies the following recursive relations: \\ \underline{Base case:} If $E=\emptyset$ then \begin{align*}
        G_M(u,v,a,b,\gamma)=\gamma.
    \end{align*}
    \underline{Recursion:} If $E\neq \emptyset$ and $e\in E$ then \begin{align*}
        G_M(u,v,a,b,\gamma)=\begin{cases}
            aG_{M\delete e}(u,v,a,b,\gamma)+bG_{M\contract e}(u,v,a,b,\gamma), &\text{if $e$ is a general element in $M$,}\\
            vG_{M\delete e}(u,v,a,b,\gamma), &\text{if $e$ is a loop in $M$,} \\
            uG_{M\contract e}(u,v,a,b,\gamma) &\text{if $e$ is a coloop in $M$.}
        \end{cases}
    \end{align*}
    In closed form this polynomial is given as \begin{align*}
        G_M(u,v,a,b)=\gamma b^{\rank(M)}a^{\crk(M)}T_M\left(\frac{u}{b},\frac{v}{a}\right)\in \Z[u,v,a,b,\gamma],
    \end{align*}
    where \begin{align*}
        T_M(s,t)=\sum_{S\subseteq E}(s-1)^{\rank(M)-\rank_M(S)}(t-1)^{\nullity_M(S)}
    \end{align*}
    is the usual Tutte polynomial.
\end{proposition}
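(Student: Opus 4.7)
The plan is to prove both existence (with the stated closed form) and uniqueness of $G_M$ by verifying that the proposed expression $\gamma b^{\rk(M)}a^{\crk(M)}T_M(u/b,v/a)$ satisfies the base case and the recursion, and then deducing uniqueness by induction on $|E|$. The main ingredients are the classical deletion-contraction identities for $T_M$ recalled in Section~\ref{Preliminaries}, together with the elementary behavior of rank and corank under deletion and contraction in the three cases.

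First, the proposed formula actually lies in $\Z[u,v,a,b,\gamma]$: since $T_M(x,y)$ has degree at most $\rk(M)$ in $x$ and at most $\crk(M)$ in $y$, the prefactor $b^{\rk(M)}a^{\crk(M)}$ clears all denominators introduced by the substitution $x=u/b$, $y=v/a$. The base case $G_\emptyset=\gamma$ is immediate from $T_\emptyset=1$ and $\rk(\emptyset)=\crk(\emptyset)=0$. For the general-element case, combine $\rk(M\delete e)=\rk(M)$, $\crk(M\delete e)=\crk(M)-1$, $\rk(M\contract e)=\rk(M)-1$, $\crk(M\contract e)=\crk(M)$ with $T_M=T_{M\delete e}+T_{M\contract e}$: multiplying the latter at $(u/b,v/a)$ by $\gamma b^{\rk(M)}a^{\crk(M)}$ and redistributing one power of $a$ into the first summand and one power of $b$ into the second yields exactly $aG_{M\delete e}+bG_{M\contract e}$. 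If $e$ is a loop, then $T_M(x,y)=y\,T_{M\delete e}(x,y)$; evaluating at $y=v/a$ introduces a factor $v/a$, which together with $\crk(M\delete e)=\crk(M)-1$ produces the desired factor $v$. The coloop case is symmetric, using $T_M(x,y)=x\,T_{M\contract e}(x,y)$.

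For uniqueness, induct on $|E|$: the base case fixes $G_\emptyset=\gamma$, and for $|E|>0$, any chosen $e\in E$ lets the recursion express $G_M$ in terms of $G_{M\delete e}$ and/or $G_{M\contract e}$, which are uniquely determined by the inductive hypothesis. The a priori concern that different choices of $e$ could yield different outputs is resolved automatically by existence: the closed form from the previous step satisfies the recursion for every $e$, so all choices must agree. Since this is a classical result, there is no genuine obstacle; the only care needed is tracking rank and corank changes across the three subcases so that the powers of $a$ and $b$ match up correctly.
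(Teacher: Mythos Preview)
Your argument is correct: you verify existence by checking that the closed form satisfies the base case and all three branches of the recursion via the classical deletion-contraction identities for $T_M$, and you deduce uniqueness by induction on $|E|$, correctly noting that existence of one solution forces all choices of $e$ to agree. The paper does not supply its own proof of this proposition; it is stated as a classical result with a reference to \cite[Section 6.2]{tutte and applications}, so there is nothing to compare against beyond confirming your verification is sound.
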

We say that some matroid invariant $G_M$ with values in some ring $R$ is a \emph{specialization of the Tutte polynomial} if we can find elements $x,y,z,w\in R$ such that $G_M=x^{\rank(M)}y^{\crk(M)}T_M(z,w)$. The statement above then says that every matroid invariant that satisfies deletion-contraction relations with $a,b\neq 0$ (in the notation of the Proposition) is a specialization of $T_M$. However, if for some matroid invariant we have $a=0$ or $b=0$, then this invariant cannot be obtained by multiplying some evaluation of $T_M(s,t)$ by a factor that only depends on the rank of the matroid. Such invariants can still be recovered from $T_M(s,t)$ but the process requires to read of certain coefficients rather than just performing substitutions.

Notice that for the invariants that we considered so far this result cannot be applied since the deletion-contraction relations for us linearly depend on the variable $t_e$ indexed by the element $e$ that we choose to remove from the matroid. For this situation we obtain the following extension of the usual result.

\begin{proposition}[Equivariant Tutte-Grothendieck]
\label{Equivariant T-G}\mbox{}
    \begin{enumerate}
        \item Associated to a matroid $M$ on the finite ground set $E$ there exists a unique 7-variable polynomial $H_M\in \Z[E][a_1,a_2,b_1,b_2,\alpha,\beta,\gamma]$ that satisfies the following recursion. \\\underline{Base case:} If $E=\emptyset$, then \begin{align*}
        H_M= \gamma.
        \end{align*}
        \underline{Recursion:} If $E\neq \emptyset$ and $e\in E$ then \begin{align*}
            H_M=\begin{cases}
                (a_1t_e+a_2)H_{M\delete e}+(b_1t_e+b_2)H_{M\contract e}, &\text{if $e$ is a general element in $M$,}\\
                ((b_1\alpha +a_1)t_e+(b_2\alpha+a_2))H_{M\delete e}, &\text{if $e$ is a loop in $M$,}\\
                ((a_1\beta+b_1)t_e+(a_2\beta+b_2))H_{M\contract e} &\text{if $e$ is a coloop in $M$.}
            \end{cases}
        \end{align*} 
        This polynomial is given in closed form as \begin{align*}
            H_M=\gamma a_2^{\crk(M)}b_2^{\rank(M)}\widehat{T}_M\left(\frac{\beta a_2}{b_2}+1, \frac{\alpha b_2}{a_2}+1,\frac{b_1}{b_2},\frac{a_1}{a_2}\right).
        \end{align*}
        \item The above result is optimal in the following sense: Let $R$ be any integral ring and fix $\gamma, a_1,a_2,b_1,b_2,c_1,c_2,d_1,d_2\in R$. Assume there exists a well-defined matroid invariant $G_M\in R[t_e\mid e\in E]$ such that the following recursion holds for all matroids $M$ over the ground set $E$.
        \\\underline{Base case:} If $E=\emptyset$, then \begin{align*}
            G_M= \gamma.
        \end{align*}
        \underline{Recursion:} If $E\neq \emptyset$ and $e\in E$, then \begin{align*}
            G_M=\begin{cases}
                (a_1t_e+a_2)G_{M\delete e}+(b_1t_e+b_2)G_{M\contract e}, &\text{if $e$ is a general element in $M$,}\\
                (c_1t_e+c_2)G_{M\delete e}, &\text{if $e$ is a loop in $M$,}\\
                (d_1t_e+d_2)G_{M \contract e}, &\text{if $e$ is a coloop in $M$.} 
            \end{cases}
        \end{align*} 
        Assume further that $a_2,b_2\neq 0$ and at least one of the parameters $c_1$, $c_2$, $d_1$, $d_2$ is not zero (otherwise $G_M=0$ for all non-empty $M$),
        then there exist $\alpha$, $\beta$ in the fraction field of $R$ such that we have \begin{align*}
            c_1&=b_1\alpha+a_1,\\
            c_2&=b_2\alpha+a_2,\\
            d_1&=a_1\beta+b_1,\\
            d_2&=a_2\beta+b_2
        \end{align*}
        and therefore \begin{align*}
            G_M=\gamma a_2^{\crk(M)}b_2^{\rank(M)}\widehat{T}_M\left(\frac{\beta a_2}{b_2}+1, \frac{\alpha b_2}{a_2}+1,\frac{b_1}{b_2},\frac{a_1}{a_2}\right).
        \end{align*}
        \end{enumerate}
\end{proposition}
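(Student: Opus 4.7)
The plan for part (a) is to verify directly that the proposed closed form
\[
H_M := \gamma a_2^{\crk(M)} b_2^{\rank(M)} \widehat{T}_M\!\left(\tfrac{\beta a_2}{b_2}+1,\; \tfrac{\alpha b_2}{a_2}+1,\; \tfrac{b_1}{b_2},\; \tfrac{a_1}{a_2}\right)
\]
satisfies each of the three recursive relations, using the deletion-contraction formulas of Proposition~\ref{deletion-contraction for equivariant tutte}. For a general element $e$, I would combine the rank/corank identities $\crk(M\delete e) = \crk(M)-1$, $\rank(M\delete e) = \rank(M)$ (and dually for $M\contract e$) with the substitution $s = a_1/a_2$, $r = b_1/b_2$: the factor $(1 + st_e) = (a_2 + a_1 t_e)/a_2$ precisely absorbs the discrepancy in $a_2$-powers and yields the coefficient $a_1 t_e + a_2$, and similarly for the $b$-branch. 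In the loop case, the substitution $y - 1 = \alpha b_2/a_2$ rewrites $(y-1)(1+rt_e) + (1+st_e)$ as $\bigl((b_1\alpha+a_1)t_e + (b_2\alpha+a_2)\bigr)/a_2$, and this fraction is absorbed by the single $a_2$-discrepancy between $H_M$ and $H_{M\delete e}$. The coloop case is dual via $x - 1 = \beta a_2/b_2$. Existence of $H_M$ follows from this verification, and uniqueness is an immediate induction on $|E|$ using the base case $H_\emptyset = \gamma$.

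For part (b), the task is to extract values $\alpha, \beta \in \operatorname{Frac}(R)$ from $G_M$ and then invoke the uniqueness from part (a). The central calculation is to compute $G_{U_{1,3}}$ in two different ways, expanding the recursion at element $e_1$ versus element $e_2$. Each expansion reduces to polynomials in $G_{U_{1,2}}$ (two parallel general elements) and $G_{U_{0,2}}$ (two loops), both of which are directly computable from the recursion. Well-definedness of $G$ forces the two expressions to agree; after cancelling the common factor $c_1 t_{e_3} + c_2$ in the integral ring $R[t_{e_1}, t_{e_2}, t_{e_3}]$ and comparing coefficients of $t_{e_1}$, one obtains the consistency identity $b_2(c_1 - a_1) = b_1(c_2 - a_2)$. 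Since $b_2 \neq 0$, this lets us set $\alpha := (c_2 - a_2)/b_2$, which then satisfies both $c_2 = b_2\alpha + a_2$ and $c_1 = b_1\alpha + a_1$. The analogous calculation on $U_{2,3}$ (three general elements, whose contractions yield coloops rather than loops) produces $a_2(d_1 - b_1) = a_1(d_2 - b_2)$, allowing us to define $\beta := (d_2 - b_2)/a_2$. With these choices, the recursion satisfied by $G_M$ coincides term-by-term with the recursion defining $H_M$ from part (a); since both invariants take the value $\gamma$ on the empty matroid, the uniqueness of part (a) forces $G_M = H_M$, which is the asserted closed form.

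The main obstacle is the $U_{1,3}$ (and dually $U_{2,3}$) computation in part (b): one must carefully organize the two expansions, identify the common nonvanishing factor to cancel in the integral polynomial ring, and read off the crucial compatibility from a specific coefficient. The degenerate cases in which the cancelled factor vanishes, such as $(c_1, c_2) = (0,0)$ or $(d_1, d_2) = (0,0)$, require a separate ad hoc verification exploiting the standing assumption that not all four of $c_1, c_2, d_1, d_2$ vanish. The remaining steps throughout are routine bookkeeping with the relations of Proposition~\ref{deletion-contraction for equivariant tutte}.
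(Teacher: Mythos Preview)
Your plan for part~(a) is exactly the paper's: verify the closed form against the three branches of Proposition~\ref{deletion-contraction for equivariant tutte}, tracking how the extra powers of $a_2$ and $b_2$ absorb the denominators introduced by the substitutions $r=b_1/b_2$, $s=a_1/a_2$, $x-1=\beta a_2/b_2$, $y-1=\alpha b_2/a_2$.

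For part~(b), you and the paper pursue the same strategy of reading off constraints from the consistency of $G$ on small uniform matroids. Your observation that the difference of the two expansions of $G_{U_{1,3}}$ has $(c_1 t_{e_3}+c_2)$ as a factor is a neat streamlining: it yields $b_2(c_1-a_1)=b_1(c_2-a_2)$ directly and avoids the paper's case split on whether $b_1=0$. The paper instead expands $G_{U_{1,3}}$ fully, compares individual linear and quadratic coefficients, and organizes the argument by which of $c_1,c_2,d_1,d_2$ is nonzero (with a table of four sub-cases), then reduces $\beta$ to $\alpha$ via matroid duality rather than repeating the computation on $U_{2,3}$.

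Where your outline is too optimistic is the degenerate case $(c_1,c_2)=(0,0)$. There you must show $a_1 b_2 = a_2 b_1$ (so that $\alpha=-a_2/b_2$ also solves $0=b_1\alpha+a_1$), and no amount of cancelling in $U_{1,3}$ helps since every term involving $c$ is already zero. The paper devotes its entire Case~2 to this: first $U_{1,2}$ gives $a_1 d_2 = a_2 d_1$, then quadratic terms in $U_{2,3}$ give $b_2 a_1 d_1 = 0$, and a further split on whether $d_1$ or $d_2$ is nonzero finally forces $a_1=0=c_1$. Your phrase ``separate ad~hoc verification'' is accurate but conceals what is in fact the longest and most delicate part of the proof.
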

We can see that part (a) naturally extends Proposition \ref{classical T-G} and hence further justifies calling $\widehat{T}_M$ the equivariant Tutte polynomial: Just like the usual Tutte polynomial is a universal deletion-contraction-invariant in the sense of Proposition \ref{classical T-G}, the equivariant Tutte polynomial is universal for deletion-contraction-relations that linearly depend on the deleted/contracted element in the sense of Proposition \ref{Equivariant T-G}.\\ Part (b) justifies the very specific coefficients in the relations, which come from the observation that when $M$ is a uniform matroid, then the resulting invariant has to be symmetric in the variables $t_{e}$. Note that just as in the non-equivariant version this statement says that almost every deletion-contraction invariant $H_M$ as in part (a) of Proposition \ref{Equivariant T-G} is a specialization of $\widehat{T}_M$. The only exceptions are the cases $a_2=0$ or $b_2=0$, which would require substitutions in the $t_e$-variables or even to read of coefficients instead of just doing evaluations of $\widehat{T}_M$. An example with $a_2=b_2=0$ is the multivariate Tutte polynomial $\widetilde{Z}_M(q,\textbf{v})$ which indeed required the substitution $t_e\to v_e-1$ as we have seen in \eqref{potts}.
\begin{proof}[Proof of Proposition \ref{Equivariant T-G}]
\begin{enumerate}
    \item By induction on the number $|E|$ it is clear that if such an invariant $H_M$ exists, it will be unique and polynomial in all occuring variables. To show existence it is therefore enough to check that the closed form that we claim for $H_M$ satisfies the correct deletion-contraction relations.  \\
    \underline{Base case:} Assume that $E=\emptyset$. In this case $M$ is the empty matroid with rank 0.  Then by definition $\widehat{T}_\emptyset=1$ and so we have \begin{align*}
    \gamma a_2^{\crk(M)}b_2^{\rank(M)}\widehat{T}_M\left(\frac{\beta a_2}{b_2}+1, \frac{\alpha b_2}{a_2}+1,\frac{b_1}{b_2},\frac{a_1}{a_2}\right)=\gamma  a_2^{0}b_2^{0} = \gamma
    \end{align*}
    \underline{Recursion:} Now assume $E\neq \emptyset$ and $e\in E$. We consider the usual three cases: \begin{enumerate}[label={(\arabic*)}]
        \item If $e$ is neither loop nor coloop in $M$, then deleting $e$ will preserve the rank of $M$ while contracting $e$ will reduce the rank by one. By applying the corresponding case of Proposition \ref{deletion-contraction for equivariant tutte} we obtain \begin{align*}
        &\gamma a_2^{\crk(M)}b_2^{\rank(M)}\widehat{T}_M\left(\frac{\beta a_2}{b_2}+1, \frac{\alpha b_2}{a_2}+1,\frac{b_1}{b_2},\frac{a_1}{a_2}\right)\\={}&
        \gamma a_2^{\crk(M)}b_2^{\rank(M)}\left(1+\frac{a_1}{a_2}t_e\right)\widehat{T}_{M\delete e}\left(\frac{\beta a_2}{b_2}+1, \frac{\alpha b_2}{a_2}+1,\frac{b_1}{b_2},\frac{a_1}{a_2}\right)+\\&+\gamma a_2^{\crk(M)}b_2^{\rank(M)}\left(1+\frac{b_1}{b_2}t_e\right)\widehat{T}_{M\contract e}\left(\frac{\beta a_2}{b_2}+1, \frac{\alpha b_2}{a_2}+1,\frac{b_1}{b_2},\frac{a_1}{a_2}\right)\\
        ={}&\gamma a_2^{\crk(M)-1}b_2^{\rank(M)}\left(a_1t_e+a_2)\right)\widehat{T}_{M\delete e}\left(\frac{\beta a_2}{b_2}+1, \frac{\alpha b_2}{a_2}+1,\frac{b_1}{b_2},\frac{a_1}{a_2}\right)+\\
        &+\gamma a_2^{\crk(M)}b_2^{\rank(M)-1}\left(b_1t_e+b_2\right)\widehat{T}_{M\contract e}\left(\frac{\beta a_2}{b_2}+1, \frac{\alpha b_2}{a_2}+1,\frac{b_1}{b_2},\frac{a_1}{a_2}\right)\\
        ={}&\left(a_1t_e+a_2\right)\left(\gamma a_2^{\crk(M\delete e)}b_2^{\rank(M\delete e)}\widehat{T}_{M\delete e}\left(\frac{\beta a_2}{b_2}+1, \frac{\alpha b_2}{a_2}+1,\frac{b_1}{b_2},\frac{a_1}{a_2}\right)\right)+\\
        &+\left(b_1t_e+b_2\right)\left(\gamma a_2^{\crk(M\contract e)}b_2^{\rank(M\contract e)}\widehat{T}_{M\contract e}\left(\frac{\beta a_2}{b_2}+1, \frac{\alpha b_2}{a_2}+1,\frac{b_1}{b_2},\frac{a_1}{a_2}\right)\right)\\
    \end{align*}
    as claimed.
        \item If $e$ is a loop in $M$, then deleting $e$ will not change the rank of $M$ and so we again just apply the corresponding case of Proposition \ref{deletion-contraction for equivariant tutte} to see \begin{align*}
    &\gamma a_2^{\corank(M)}b_2^{\rank(M)}\widehat{T}_M\left(\frac{\beta a_2}{b_2}+1, \frac{\alpha b_2}{a_2}+1,\frac{b_1}{b_2},\frac{a_1}{a_2}\right)\\={}&a_2\left(\frac{\alpha b_2}{a_2}\left(1+\frac{b_1}{b_2}t_e\right)+\left(1+\frac{a_1}{a_2}t_e\right)\right)\\&\gamma a_2^{\corank(M\delete e)}b_2^{\rank(M\delete e)}\widehat{T}_{M\delete e}\left(\frac{\beta a_2}{b_2}+1, \frac{\alpha b_2}{a_2}+1,\frac{b_1}{b_2},\frac{a_1}{a_2}\right)\\={}&(\alpha(b_1t_e+b_2)+(a_1t_e+a_2))\\&\gamma a_2^{\corank(M\delete e)}b_2^{\rank(M\delete e)}\widehat{T}_{M\delete e}\left(\frac{\beta a_2}{b_2}+1, \frac{\alpha b_2}{a_2}+1,\frac{b_1}{b_2},\frac{a_1}{a_2}\right)\\={}&((b_1\alpha+a_1)t_e+(b_2\alpha+a_2))\\&\gamma a_2^{\corank(M\delete e)}b_2^{\rank(M \delete e)}\widehat{T}_{M\delete e}\left(\frac{\beta a_2}{b_2}+1, \frac{\alpha b_2}{a_2}+1,\frac{b_1}{b_2},\frac{a_1}{a_2}\right)
        \end{align*}
        \item If $e$ is a coloop in $M$, then contracting $e$ will reduce the rank of $M$ by one and so similarly we get \begin{align*}
    &\gamma a_2^{\corank(M)}b_2^{\rank(M)}\widehat{T}_M\left(\frac{\beta a_2}{b_2}+1, \frac{\alpha b_2}{a_2}+1,\frac{b_1}{b_2},\frac{a_1}{a_2}\right)\\={}&b_2\left(\frac{\beta a_2}{b_2}\left(1+\frac{a_1}{a_2}t_e\right)+\left(1+\frac{b_1}{b_2}t_e\right)\right)\\&\gamma a_2^{\corank(M\contract e)}b_2^{\rank(M\contract e)}\widehat{T}_{M\contract e}\left(\frac{\beta a_2}{b_2}+1, \frac{\alpha b_2}{a_2}+1,\frac{b_1}{b_2},\frac{a_1}{a_2}\right)\\={}&(\beta(a_1t_e+a_2)+(b_1t_e+b_2))\\&\gamma a_2^{\corank(M\contract e)}b_2^{\rank(M\contract e)}\widehat{T}_{M\contract e}\left(\frac{\beta a_2}{b_2}+1, \frac{\alpha b_2}{a_2}+1,\frac{b_1}{b_2},\frac{a_1}{a_2}\right)\\={}&((a_1\beta+b_1)t_e+(a_2\beta+b_2))\\&\gamma a_2^{\corank(M\contract e)}b_2^{\rank(M\contract e)}\widehat{T}_{M\contract e}\left(\frac{\beta a_2}{b_2}+1, \frac{\alpha b_2}{a_2}+1,\frac{b_1}{b_2},\frac{a_1}{a_2}\right)
        \end{align*}
    \end{enumerate} 
    \item Assume that $G_M$ is a well-defined invariant of labeled matroids that satisfies the given recursion and assume $a_2,b_2\neq 0$. We will only show that it is possible to choose $\alpha$ according to the claim as then for $\beta$ we can argue as follows:  Consider the matroid invariant $G_M^*:=G_{M^*}$ that to a matroid $M$ assigns whatever $G_M$ would assign to the dual matroid. Since for $e\in E$ and any matroid $M$ on $E$ we have $(M^*\delete e)=(M\contract e)^*$ and $(M^*\contract e)=(M\delete e)^*$ (whenever these operations are well-defined), we see that $G_M^*$ satisfies the following recursion for all matroids $M$ on $E\neq \emptyset$ and $e\in E$: \begin{align*}
        G_\emptyset^*&=G_\emptyset=\gamma \\
        G_M^*&=\begin{cases}
                (b_1t_e+b_2)G_{M\delete e}^*+(a_1t_e+a_2)G_{M\contract e}^* &\text{ if } e \text{ is neither a loop nor a coloop in }M\\
                (d_1t_e+d_2)G_{M\delete e}^* &\text{ if } e  \text{ is a loop in }  M \\
                (c_1t_e+c_2)G_{M \contract e}^* &\text{ if } e  \text{ is a coloop in }  M. 
        \end{cases}
    \end{align*} 
    In particular we observe that choosing appropriate $\beta$ for $G_M$ is the same as choosing appropriate $\alpha$ for $G_M^*$. \\
    To show that such $\alpha$ exists for $G_M$ we will consider
    two cases, namely whether $b_1$ is zero or not. \\
    \underline{Case 1: $b_1\neq 0$} \\
    First consider the matroid $M=U_{1,3}$ with ground set $E=\{0,1,2\}$. Since the set of bases of this matroid is preserved under any permutation of $E$, any deletion-contraction invariant for this $M$ must return a polynomial in $R[t_0,t_1,t_2]$ that is symmetric in the three variables. We compute $G_M$ recursively by first removing $0$, then 1 and finally 2 from the ground set.  This yields \begin{align*}
        G_{U_{1,3}}={}&(a_1t_0+a_2)G_{U_{1,2}}+(b_1t_0+b_2)G_{U_{0,2}} \\
        ={}&(a_1t_0+a_2)((a_1t_1+a_2)G_{U_{1,1}}+(b_1t_1+b_2)G_{U_{0,1}})+(b_1t_0+b_2)(c_1t_1+c_2)G_{U_{0,1}} \\
        ={}&(a_1t_0+a_2)((a_1t_1+a_2)(d_1t_2+d_2)+(b_1t_1+b_2)(c_1t_2+c_2))+\\&+(b_1t_0+b_2)(c_1t_1+c_2)(c_1t_2+c_2) \\={}&\left({a}_{1}{b}_{2}{c}_{2}+{b}_{1}{c}_{2}^{2}+{a}_{1}{a}_{2}{d}_{2}\right){t}_{0}+\\&+\left({a}_{2}{b}_{1}{c}_{2}+{b}_{2}{c}_{1}{c}_{2}+{a}_{1}{a}_{2}{d}_{2}\right){t}_{1}\\&+\left({a}_{2}{b}_{2}{c}_{1}+{b}_{2}{c}_{1}{c}_{2
     }+{a}_{2}^{2}{d}_{1}\right){t}_{2}+\\&+{a}_{2}{b}_{2}{c}_{2}+{b}_{2}{c}_{2}^{2}+{a}_{2}^{2}{d}_{2}+\text{h.o.t},
    \end{align*}
    where h.o.t stands for the remaining terms which are of degree at least 2 in the variables $t_0,t_1,t_2$
    By the symmetry of the matroid, if we chose to eliminate the elements 0,1,2 in any other order instead, we would get the same expression but with permuted roles of the variables $t_0,t_1,t_2$. In particular the coefficients in front of the linear terms in these variables need to be all equal for $G_M$ to be well-defined. Comparing the $t_0$ and the $t_1$ coefficient, we see \begin{align}
    \label{equation U_1,3 linear}
        {a}_{1}{b}_{2}{c}_{2}+{b}_{1}{c}_{2}^{2}={a}_{2}{b}_{1}{c}_{2}+{b}_{2}{c}_{1}{c}_{2}.
    \end{align}
    As long as $c_2\neq 0$, this allows us to define \begin{align*}
        \alpha:= \frac{c_1-a_1}{b_1}=\frac{c_2-a_2}{b_2}, 
    \end{align*}
    which gives \begin{align*}
        c_1&=b_1\alpha+a_1,\\
        c_2&=b_2\alpha+a_2
    \end{align*}
    as desired. \\
    This proof seems to only work for $c_2\neq 0$ as we had to divide by $c_2$ on our way. However, we assumed that at least one of the parameters $c_1,c_2,d_1,d_2$ is non-zero (such that $G_M$ is not just the trivial invariant that returns 0 on all non-empty matroids). By changing the matroid from $U_{1,3}$ to $U_{2,3}$ (again on $E=\{0,1,2\}$) and by also considering degree two terms instead of linear terms, we can conduct the same proof but change by which of the four parameters $c_1,c_2,d_1,d_2$ we need to divide. The terms to consider are recorded in the following table, where we always assume that $G_M$ is build from $M$ on $E=\{0,1,2\}$ recursively by first removing 0, then 1 and lastly 2.\\
\begin{table}[h]
\begin{tabular}{@{}|l|l|l|@{}}
\toprule
            & $M$       & monomials                      \\ \midrule
$c_1\neq 0$ & $U_{1,3}$ & $t_0t_2\leftrightarrow t_1t_2$ \\
$c_2\neq 0$ & $U_{1,3}$ & $t_0\leftrightarrow t_1$       \\
$d_1\neq 0$ & $U_{2,3}$ & $t_0t_1\leftrightarrow t_0t_2$ \\
$d_2\neq 0$ & $U_{2,3}$ & $t_1\leftrightarrow t_2$       \\ \bottomrule
\end{tabular}
\end{table} \\
\underline{Case 2: $b_1=0$} \\
In this case we can choose $\alpha=\frac{c_2-a_2}{b_2}$, which will clearly force $c_2=b_2\alpha+a_2$. Since $b_1=0$ we now need to show that also $c_1=a_1=b_1\alpha+a_1$. If one of the parameters $c_1,c_2$ is non-zero, the same trick as in the first case works: Assuming $c_2\neq 0$, equation (\ref{equation U_1,3 linear}) gives $a_1=c_1$ as desired, and assuming $c_1\neq 0$ we can look at the $t_0t_2$ and $t_1t_2$ term in $U_{2,3}$ to conclude. We will therefore assume now $c_1=c_2=0$. \\
Next we consider $M=U_{1,2}$ on the groundset $E=\{0,1\}$. We recursively compute \begin{align*}
    G_M=(a_1t_0+a_2)(d_1t_1+d_2)+(b_1t_0+b_2)(c_1t_1+c_2)=a_1d_1t_0t_1+a_1d_2t_0+a_2d_1t_1+a_2d_2.
\end{align*}
Again this needs to be a symmetric polynomial, so we conclude $a_1d_2=a_2d_1$. \\
Next we compare the degree two terms of $U_{2,3}$ on $E=\{0,1,2\}$. The coefficients of $t_0t_2$ and $t_1t_2$ after substituting $b_1=c_1=c_2=0$ reveal that $a_1d_2d_1=a_2d_1^2+b_2a_1d_1$. Since we already know $a_1d_2=a_2d_1$, this implies $b_2a_1d_1=0$. In particular if we have $d_1\neq0$, we can conclude $a_1=0=c_1$ as desired. We therefore now assume that also $d_1=0$ and hence for sure $d_2\neq 0$ as otherwise all four parameters $c_1,c_2,d_1,d_2$ would be zero, contradicting our assumption. \\
To conclude we finally consider the matroid $U_{2,3}$ on $E=\{0,1,2\}$ and look at the coefficients in front of the monomials $t_0$ and $t_1$. After substituting $c_1=c_2=d_1=b_1=0$ all terms vanish except for one and we get $b_2a_1d_2=0$, which after noting $b_2,d_2\neq 0$ gives again $a_1=0=c_1$.
\end{enumerate}
\end{proof}

\section{Evaluations of equivariant Tutte polynomial}
\label{sec:evaluations}
In this section we study specializations of the equivariant Tutte polynomial via its evaluations. There are many known combinatorial interpretations of evaluations of the usual Tutte polynomial, some of them are listed in the second column of table \ref{tab:eval_of_tutte}. The first aim of this section is to find analogues for the equivariant Tutte polynomial. Another specialization of Tutte polynomial is the reduced characteristic polynomial. We propose an analogue definition of an equivariant reduced characteristic polynomial which is motivated by  connections between the usual reduced characteristic polynomial and the non-equivariant push-forward of the tautological class $c_{\crk(M)}([Q_M])$, first observed in \cite{Huh}. In the last section we investigate how much information we lose by evaluating the equivariant Tutte polynomial.

\subsection{Combinatorial interpretations}

We have already seen that setting $t_e=0$ for all $e\in E$ results in obtaining the usual Tutte polynomial. In other words, the constant term with respect to the $t_e$-variables is the usual Tutte polynomial. This motivates us to study other coefficients in the $t_e$-variables.

\begin{notation}
    For $S\subset E$ we denote $t_S := \prod_{e\in S}t_e$.
\end{notation}

We start with special evaluations for $r$ and $s$. The following result will allow us to transfer the known results in table \ref{tab:eval_of_tutte} to the equivariant Tutte polynomial.

\begin{proposition}\label{evaluating_r_s}
    Let $A$ be a subset of $E$. The coefficient of $t_{A}$ in $\widehat{T}_M(x,y,1,0)$ is $$(y-1)^{\nl_M(A)}T_{M\contract A}(x,y)$$ and the coefficient of $t_A$ in $\widehat{T}_M(x,y,0,1)$ is $$(x-1)^{\rank(M) - \rank_M(E\delete A)}T_{M\delete A}(x,y).$$
\end{proposition}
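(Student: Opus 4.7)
The plan is a direct computation from the definition of $\widehat{T}_M$, splitting into the two cases $(r,s)=(1,0)$ and $(r,s)=(0,1)$, which are symmetric in structure.

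For the first evaluation, substituting $r=1$, $s=0$ into the definition collapses the factor $\prod_{e\notin S}(1+st_e)$ to $1$, and expanding $\prod_{e\in S}(1+t_e) = \sum_{A\subseteq S} t_A$ yields
\[
    \widehat{T}_M(x,y,1,0) = \sum_{A\subseteq E}\left(\sum_{S\supseteq A}(x-1)^{\rank(M)-\rank_M(S)}(y-1)^{\nl_M(S)}\right)t_A.
\]
I would then reindex the inner sum by writing $S = A\sqcup S'$ with $S'\subseteq E\setminus A$ and invoke the standard matroid identities
\[
    \rank_M(A\cup S') = \rank_M(A)+\rank_{M\contract A}(S'), \qquad \nl_M(A\cup S') = \nl_M(A)+\nl_{M\contract A}(S'),
\]
so that the $(y-1)^{\nl_M(A)}$ factor splits off and the remaining sum over $S'\subseteq E\setminus A$ is exactly the definition of $T_{M\contract A}(x,y)$.

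For the second evaluation, setting $r=0$, $s=1$ gives
\[
    \widehat{T}_M(x,y,0,1) = \sum_{A\subseteq E}\left(\sum_{S\subseteq E\setminus A}(x-1)^{\rank(M)-\rank_M(S)}(y-1)^{\nl_M(S)}\right)t_A,
\]
where now the coefficient of $t_A$ ranges over $S$ disjoint from $A$. The key observation is that for $S\subseteq E\setminus A$, both rank and nullity of $S$ in $M$ agree with those in $M\delete A$ (since deletion is just restriction), so splitting the exponent of $x-1$ as
\[
    \rank(M)-\rank_M(S) = \bigl(\rank(M)-\rank_M(E\setminus A)\bigr) + \bigl(\rank(M\delete A)-\rank_{M\delete A}(S)\bigr)
\]
factors out $(x-1)^{\rank(M)-\rank_M(E\setminus A)}$ and leaves $T_{M\delete A}(x,y)$.

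The main obstacle is purely bookkeeping of the rank/nullity identities under contraction and deletion; no new matroid machinery is needed beyond recognizing these standard relations. The computation should fit on roughly a page.
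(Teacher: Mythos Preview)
Your argument for the first evaluation $(r,s)=(1,0)$ is essentially identical to the paper's: both extract the coefficient of $t_A$ as $\sum_{S\supseteq A}(x-1)^{\rank(M)-\rank_M(S)}(y-1)^{\nl_M(S)}$ and then use the contraction identities $\rank_M(A\cup S')=\rank_M(A)+\rank_{M\contract A}(S')$ to factor out $(y-1)^{\nl_M(A)}$ and recognize $T_{M\contract A}$.

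For the second evaluation you diverge from the paper. The paper does not repeat the computation but instead invokes the duality relation $\widehat{T}_M(x,y,0,1)=\widehat{T}_{M^*}(y,x,1,0)$, applies the first part to $M^*$, and then translates back using $(M^*\contract A)^*=M\delete A$ and $\nl_{M^*}(A)=\rank(M)-\rank_M(E\setminus A)$. Your direct approach---summing over $S\subseteq E\setminus A$, noting that rank and nullity of such $S$ agree in $M$ and $M\delete A$, and splitting the exponent of $x-1$ as $(\rank(M)-\rank(M\delete A))+(\rank(M\delete A)-\rank_{M\delete A}(S))$---is correct and entirely self-contained, avoiding any appeal to the duality property of $\widehat{T}_M$. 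The paper's route is a couple of lines shorter and highlights the symmetry of the two statements, but yours has the advantage of not depending on a separately proved proposition. Either is a perfectly good proof.
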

\begin{proof}
    First we prove the statement about $\widehat{T}_M(x,y,1,0)$. For any $S\subset E$ we know $\rank_{M\delete A}(S\delete A) = \rank_{M}(S\delete A)$, $\rank_{M\contract A}(S\delete A) = \rank_M (S\cup A) - \rank_M(A)$. In particular, if $S\subset E\delete A$, $\rank_{M\delete A}(S) = \rank_M(S)$, and if $A\subset S$, $\rank_{M\contract A}(S\delete A) = \rank_M(S) - \rank_M(A)$.
    We will use the closed form of the equivariant Tutte polynomial. The term $t_{A}$ can appear only in those summands, for which $A\subset S$. Thus the desired coefficient can be expressed as
    $$
        \sum_{A\subset S\subset E} (x-1)^{\rank(M)-\rank_M(S)} (y-1)^{\nullity_M(S)} =
    $$
    $$
        =\sum_{A\subset S\subset E} (x-1)^{\rank(M) - \rank_M(A) + \rank_M(A) - \rank_M(S)} (y-1)^{|S|- |A| + \rank_M(A)-\rank_M(S) + |A| - \rank_M(A)} =
    $$
    $$
         =
         (y-1)^{|A|-\rank_M(A)}\sum_{A\subset S\subset E} (x-1)^{\rank(M\contract A)-\rank_{M\contract A}(S\delete A)}(y-1)^{|S\setminus A| - \rank_{M}(S\delete A)} 
         =
    $$
    $$
         =
         (y-1)^{|A|-\rank_M(A)}\sum_{S\subset E\setminus A} (x-1)^{\rank(M\contract A)-\rank_{M\contract A}(S)}(y-1)^{|S| - \rank_{M\contract A}(S)}  = (y-1)^{|A|-\rank_M(A)}T_{M\contract A}(x,y).
    $$
    Now we will prove the second part of the proposition by passing to dual matroid. Recall $\rank_{M^*}(A) = |A| + \rank_M(E\setminus A) - \rank(M)$.
    \begin{align*}
        \widehat{T}_M(x,y,0,1) &= \widehat{T}_{M^*}(y,x,1,0) = (x-1)^{|A| - \rank_{M^*}A}T_{M^*\contract A}(y,x) =\\
          &= (x-1)^{\rank(M)-\rank_M(E\delete A)}T_{M\delete A}(x,y).\qedhere
    \end{align*}
\end{proof}
We now give a list of some of evaluations of $\widehat{T}_M$ and their combinatorial interpretations.

\begin{theorem}
\label{thm:evaluations}
For a matroid $M$ on a groundset $E$ and a subset $A\subseteq E$ there are combinatorial interpretations of the coefficients of $t_A$ after special evaluations, listed in table \ref{tab:eval_of_tutte}. The first column contains the values of $(x,y,r,s)$ that are substituted, the second column refers to $T_M(x,y)$ and the last column describes the coefficient of the monomial $t_A$ in $\widehat{T}_M(x,y,r,s)$. For the last two lines, we work with a graph, its corresponding graphic matroid and Tutte polynomial.
\begin{table}[h!]
    \centering
    \renewcommand{\arraystretch}{1.8}
    \setlength{\tabcolsep}{.65em}
    \begin{tabular}{p{1.8cm}p{3.5cm}p{9cm}}
         Evaluation & Classical result & Combinatorial interpretation of the coefficient of $t_A$ \\
         \hline
         (1,1,1,0) & Number of bases & Number of bases containing $A$\\
         (2,2,1,0) & $2^{|E|}$ & $2^{|E\delete A|}$ \\
         (2,1,1,0) & Number of independent sets & Number of independent sets containing $A$ \\
         (1,2,0,1) & Number of spanning sets & Number of spanning sets disjoint from $A$ \\
         (2,0,1,0) & Number of acyclic orientations & For $A$ independent: number of orientations of $E\delete A$, so that the orientation of all edges is acyclic no matter how $A$ is oriented.\\
         (0,2,1,0) & Number of strongly connected orientations & Number of strongly connected orientations if we allow edges from $A$ to be directed in both ways\\
    \end{tabular}
    \caption{Special evaluations of the equivariant Tutte polynomial}
    \label{tab:eval_of_tutte}
\end{table}

\end{theorem}
\begin{proof}
    All the conclusions follow from the classical results and \ref{evaluating_r_s}. For an illustration we give a detailed proof of the case of evaluating in (1,1,1,0).
     By \ref{evaluating_r_s} we know the coefficient of $t_A$ is $(1-1)^{\nl_M(A)}T_{M\contract A}(1,1)$. Clearly this is 0 when $A$ is not independent, i.e. if there is no basis containing $A$. If $A$ is independent, this is $T_{M\contract A}(1,1)$. By the corresponding classical result this is the number of bases of $M\contract A$. Since $A$ is independent, unifying a basis of $M\contract A$ with $A$ gives a basis of $M$. On the other hand for all bases $B$ of $M$ containing $A$, $B\setminus A$ is a basis of $M\contract A$.
\end{proof}
\begin{remark}
    Setting $A=\emptyset$ in the previous theorem gives the classical results, so in this case the last two columns of \ref{tab:eval_of_tutte} agree. Since $t_\emptyset = 1$, we can find the classical result as the free term of the evaluated polynomial.
\end{remark}
We now continue Example \ref{Example:cycle} of a graphic matroid coming from just one cycle of length $n+1$.
\begin{example}
\label{Example:cycle-continued}
    Let $M=U_{n,n+1}$, we computed in \ref{Example:cycle}, that \begin{align*}
        \widehat{T}_M(x,y,r,s)=\sum_{k=0}^{n+1}\left(\frac{(xs-s+r)^{k}x^{n+1-k}-r^{k}}{x-1}+(y-1)r^{k}\right)\elem_k(\{t_e\}_{e\in E}).
    \end{align*}
    Evaluating at $(r,s)=(1,0)$ this yields \begin{align*}
        \widehat{T}_M(x,y,1,0)&=\sum_{k=0}^{n+1}\left(\frac{x^{n+1-k}-1}{x-1}+y-1\right)\elem_k(\{t_e\}_{e\in E}) \\&=
        \sum_{k=0}^{n}\left(y+\sum_{i=1}^{n-k}x^{i}\right)\elem_k(\{t_e\}_{e\in E})+(y-1)\elem_{n+1}(\{t_e\}_{e\in E}).
    \end{align*}
    This means that the coefficient of $t_A$ for some $k$-element subset $A\subsetneq E$ and $k\neq n+1$ is precisely $y+\sum_{i=1}^{n-k}x^{i}$, which is the classical Tutte polynomial of the $n+1-k$ cycle $U_{n-k,n+1-k}$. Indeed this is precisely the matroid that we obtain from $M$ by contracting any $k$-element subset, so \ref{evaluating_r_s} confirms our computation. Even for $A=E$ we obtain the result from \ref{evaluating_r_s} by noting that in this case $(y-1)^{|A|-\rank_M(A)}T_{M\contract A}=(y-1)$. 
\end{example}

We have been evaluating only in $x,y,r$ and $s$ so far. By evaluating some of the $t_e$-variables we can recover the equivariant Tutte polynomial for smaller matroids.
\begin{proposition}\label{evaluating_t_e}
    Let $A\subset E$. Then
    \begin{align*}
        \left.\widehat{T}_M(x,y,r,s)\right|_{t_e = -\frac{1}{s}, e\in A} &= \left(1-\frac{r}{s}\right)^{|A|}(y-1)^{\nullity_M(A)}\widehat{T}_{M\contract A}(x,y,r,s),\\
        \left.\widehat{T}_M(x,y,r,s)\right|_{t_e = -\frac{1}{r}, e\in A} &= \left(1-\frac{s}{r}\right)^{|A|}(x-1)^{\rank(M) - \rank(E\delete A)}\widehat{T}_{M\delete A}(x,y,r,s).
    \end{align*}
\end{proposition}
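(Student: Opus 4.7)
The plan is to substitute directly into the closed form of $\widehat T_M$ given in Definition \ref{equivariant-tutte-definition}, observe which summands survive, and then identify the remaining sum with the equivariant Tutte polynomial of a smaller matroid after relabeling the index set.

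\textbf{First formula.} Substituting $t_e = -1/s$ for $e \in A$ makes the factor $(1+st_e)$ vanish for every $e \in A$. Therefore, in $\sum_{S\subseteq E}\cdots\prod_{e\in S}(1+rt_e)\prod_{e\notin S}(1+st_e)$, every summand with $A \not\subseteq S$ vanishes (some $e \in A$ would appear in the outside product and zero it out). For the surviving terms (those with $A \subseteq S$), each $e \in A$ contributes a factor $(1+rt_e) = 1 - r/s$ to the inside product, giving a global factor $(1-r/s)^{|A|}$. The plan is then to reindex the surviving sum via $S' := S \setminus A$, ranging over $S' \subseteq E \setminus A$, and use the standard identities for the rank function under contraction:
\[
    \rank_M(S' \cup A) = \rank_{M\contract A}(S') + \rank_M(A),\quad \rank(M) = \rank(M\contract A) + \rank_M(A),
\]
from which $\rank(M) - \rank_M(S) = \rank(M \contract A) - \rank_{M\contract A}(S')$ and
\[
    \nullity_M(S) = \nullity_{M\contract A}(S') + \nullity_M(A).
\]
The factor $(y-1)^{\nullity_M(A)}$ can be pulled outside of the sum, and what remains is precisely $\widehat T_{M\contract A}(x,y,r,s)$ (the $t_e$ for $e \in A$ have been evaluated away and do not appear in $\widehat T_{M\contract A}$ in any case).

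\textbf{Second formula.} The cleanest route is via duality: applying $\widehat T_M(x,y,r,s) = \widehat T_{M^*}(y,x,s,r)$ and using that $(M\delete A)^* = M^* \contract A$ together with the rank identity $\rank(M) - \rank_M(E\setminus A) = |A| - \rank_{M^*}(A) = \nullity_{M^*}(A)$ reduces the claim to the first formula applied to $M^*$. Alternatively one may redo the direct computation: here the substitution $t_e = -1/r$ for $e \in A$ kills $(1+rt_e)$, forcing $S \cap A = \emptyset$; the factor $(1-s/r)^{|A|}$ emerges from $(1+st_e)|_{t_e=-1/r}$ for $e \in A$; and the rank/nullity identities for deletion (where $\rank_{M\delete A}$ is simply the restriction of $\rank_M$ to subsets of $E\setminus A$) let us extract the factor $(x-1)^{\rank(M)-\rank_M(E\setminus A)}$ and identify the leftover as $\widehat T_{M\delete A}(x,y,r,s)$.

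There is no real obstacle here; the only thing to be careful with is bookkeeping of the exponent shifts in $\rank$ and $\nullity$ under contraction and deletion, which are exactly the shifts already used in Proposition \ref{evaluating_r_s}.
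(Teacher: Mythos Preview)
Your proposal is correct and follows essentially the same approach as the paper: direct substitution into the closed form of $\widehat T_M$, observing that the terms with $A\not\subseteq S$ (resp.\ $S\cap A\neq\emptyset$) vanish, factoring out $(1-r/s)^{|A|}$ (resp.\ $(1-s/r)^{|A|}$), and then reindexing with the standard rank/nullity identities for contraction; the paper likewise derives the second formula by passing to the dual matroid.
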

\begin{proof}
    We proceed similar to the proof of \ref{evaluating_r_s}.
    $$
        \widehat{T}_M(x,y,r,s)|_{t_e = -\frac{1}{s}, e\in A} = 
    $$
    $$
        =\sum_{A\subset S\subset E} (x-1)^{\rank(M)-\rank_M(S)}(y-1)^{\nullity_M(S)} \prod_{e\in A} \left(1-\frac{r}{s}\right) \prod_{e\in S\delete A} (1+r t_e) \prod_{e\notin S} (1+s t_e) +
    $$
    $$
        +\sum_{A\not\subset S\subset E} (x-1)^{\rank(M)-\rank_M(S)}(y-1)^{\nullity_M(S)} \prod_{e\in S} (1+r t_e) \cdot 0 =
    $$
    $$
       = \left(1-\frac{r}{s}\right)^{|A|}\sum_{A\subset S\subset E} (x-1)^{\rank(M)-\rank_M(A) + \rank_M(A)-\rank_M(S)}(y-1)^{\nullity_M(S)-\nullity_M(A) + \nullity_M(A)}\cdot
     $$
     $$
        \cdot\prod_{e\in S\delete A} (1+r t_e) \prod_{e\notin S} (1+s t_e) =
    $$
    $$
        = \left(1-\frac{r}{s}\right)^{|A|}(y-1)^{\nullity_M(A)}\sum_{S\subset E\delete A} (x-1)^{\rank(M\contract A)-\rank_{M\contract A}(S)}(y-1)^{\nullity_{M\contract A}(S)} \prod_{e\in S\delete A} (1+r t_e) \prod_{e\notin S} (1+s t_e) =
    $$
    $$
        =\left(1-\frac{r}{s}\right)^{|A|}(y-1)^{\nullity_M(A)}\widehat{T}_{M\contract A}(x,y,r,s).
    $$
    For the second statement, we again pass to the dual matroid.
    $$
        \widehat{T}_M(x,y,r,s)|_{t_e = -\frac{1}{r}, e\in A} = \widehat{T}_{M^*}(y,x,s,r)|_{t_e = -\frac{1}{r}, e\in A} =
    $$
    $$
        =\left(1-\frac{s}{r}\right)^{|A|}(x-1)^{\rank(M) - \rank(E\delete A)}\widehat{T}_{M^*\contract A}(y,x,s,r) = 
    $$
    $$
        =\left(1-\frac{s}{r}\right)^{|A|}(x-1)^{\rank(M) - \rank(E\delete A)}\widehat{T}_{M^\delete A}(x,y,r,s).
    $$
\end{proof}

\subsection{Equivariant characteristic polynomial}
Another polynomial associated to a matroid is its reduced characteristic polynomial \begin{align*}
    \chi_M(q):= (-1)^{\rank(M)}\frac{T_M(1-q,0)}{q-1}\in \Z[q].
\end{align*}
In the case of a graphic matroid coming from some graph $G$, the above notion agrees with the reduced chromatic polynomial of the graph $G$ up to a factor $q^{c(G)}$ where $c(G)$ denotes the number of connected components of $G$ and therefore is not a matroid invariant. The latter polynomial, when multiplied by $q-1$, evaluates for any natural number $q$ to the number of valid vertex colorings of the graph $G$.

Just as we did to define the equivariant Tutte polynomial, we will recall a geometric interpretation of the reduced characteristic polynomial and lift it to an equivariant setting. The geometric interpretation was first observed by June Huh in \cite{Huh}. However, it can also be seen from \ref{tautological pushforward}. Indeed non-equivariantly we have \begin{align*}
    \int_{X_E}c([\Sbundle_{U_{n,E}}^\vee],x)c([\Qbundle_{U_{1,E}}],y)c_{\corank(M)}([\Qbundle_M])\left|_{y=-1}\right.=\chi_M(x)
\end{align*}
Hence to define an equivariant analog of the reduced characteristic polynomial, we will pushforward the top Chern class of the $\Qbundle_M$ bundle. Since we already know how to pushforward the whole graded Chern class already, this is straightforward.
\begin{proposition}[Pushforward of the top Chern class of the quotient bundle]
    $$
    \pi^n_*\Big(c_{\corank(M)}([\Qbundle_M])\Big) = P_M(\alpha,\beta)
    $$
    holds in $H_T(\P^n\times\P^n)$, where $P_M\in\Z[E][\alpha,\beta]$ is given by
   $$
        P_M(\alpha,\beta) = \frac{1}{\alpha+\beta} \sum_{S\subset{E}} \alpha^{\corank(M)-\nullity_M(S)}(-\beta)^{\nullity_M(S)}\cdot\prod_{e\in S} (\alpha+t_e)\prod_{e\notin S}(\beta-t_e).
   $$
\end{proposition}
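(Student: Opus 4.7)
The plan is to deduce this directly as a specialization of Theorem~\ref{thrm:pushforward-graded-chern}. The key observation is that $c_{\corank(M)}([\Qbundle_M])$ is exactly the top-degree coefficient in $w$ of $c([\Qbundle_M],w)$, and we can isolate the quotient part of the product $c([\Sbundle_M^\vee],z)c([\Qbundle_M],w)$ by setting $z=0$, since $c_0([\Sbundle_M^\vee])=(\elem_0(\{t_e\}_{e\in B_\sigma}))_\sigma = 1$.

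First, I would observe that $c([\Sbundle_M^\vee],0)=1$ and hence $\xi_M|_{z=0} = c([\Qbundle_M],w)$. By Theorem~\ref{thrm:pushforward-graded-chern},
\[
\pi^n_*\bigl(c([\Qbundle_M],w)\bigr) = F_M(\alpha,\beta,0,w) \qquad\text{in } H_T(\P^n\times\P^n)[w],
\]
where (reading off directly from the closed form)
\[
F_M(\alpha,\beta,0,w) = \frac{1}{\alpha+\beta}\sum_{S\subseteq E}(1+\alpha w)^{\crk(M)-\nullity_M(S)}(1-\beta w)^{\nullity_M(S)}\prod_{e\in S}(\alpha+t_e)\prod_{e\notin S}(\beta-t_e).
\]

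Next, I would take the coefficient of $w^{\corank(M)}$ on both sides. On the left, linearity of pushforward together with the expansion $c([\Qbundle_M],w)=\sum_i c_i([\Qbundle_M])w^i$ yields $\pi^n_*(c_{\corank(M)}([\Qbundle_M]))$ as the coefficient of $w^{\corank(M)}$. On the right, in each summand indexed by $S\subseteq E$, the polynomial $(1+\alpha w)^{\crk(M)-\nullity_M(S)}(1-\beta w)^{\nullity_M(S)}$ has total degree exactly $\crk(M)$ in $w$, with leading term $\alpha^{\crk(M)-\nullity_M(S)}(-\beta)^{\nullity_M(S)}w^{\crk(M)}$. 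Extracting this leading coefficient summand-by-summand gives
\[
[w^{\corank(M)}]\,F_M(\alpha,\beta,0,w) = \frac{1}{\alpha+\beta}\sum_{S\subseteq E}\alpha^{\crk(M)-\nullity_M(S)}(-\beta)^{\nullity_M(S)}\prod_{e\in S}(\alpha+t_e)\prod_{e\notin S}(\beta-t_e),
\]
which is precisely $P_M(\alpha,\beta)$. Equating the two sides completes the proof.

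There is essentially no obstacle here, since all the heavy lifting was done in Theorem~\ref{thrm:pushforward-graded-chern}; one only needs to verify the bookkeeping of the $w$-degrees. As a sanity check, the same reasoning that $F_M$ is a well-defined polynomial (as explained after the statement of Theorem~\ref{thrm:pushforward-graded-chern}) applies to $P_M$: modulo $\alpha+\beta$ the sum becomes $(-\beta)^{\crk(M)}\prod_e(\beta-t_e)\cdot\sum_S(-1)^{|S|} = 0$, so dividing by $\alpha+\beta$ yields a genuine polynomial in $\Z[E][\alpha,\beta]$, as required for membership in $H_T(\P^n\times\P^n)$.
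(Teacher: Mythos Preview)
Your proof is correct and takes essentially the same approach as the paper: the paper's proof is the single sentence ``This follows from Theorem~\ref{thrm:pushforward-graded-chern} by looking at the coefficient of $z^0w^{\corank(M)}$,'' and your argument is exactly a careful unpacking of that sentence (setting $z=0$ to isolate $z^0$, then extracting the top $w$-coefficient).
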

\begin{proof}
    This follows from \ref{thrm:pushforward-graded-chern} by looking at the coefficient of $z^0w^{\corank(M)}$.
\end{proof}
Again we want to reverse the grading in the variables $\alpha,\beta$ for the reasons described in Section \ref{chapter 4}. Then we do the same substitutions as in the non-equivariant case, i.e. we set $\beta=-1$ and read $\alpha$ as a formal variable. This results in the following definition.
\begin{definition}
    Let $M$ be a matroid on a groudset $E$. We define the equivariant reduced characteristic polynomial $\widehat{\chi}_M(q)\in \Z[E][q]$ of $M$ as follows:
    $$
        \widehat{\chi}_M(q) = \frac{(-1)^{\rank(M)}}{q-1}\cdot \widehat{T}_M(1-q, 0, q, 1) = \frac{1}{q-1} \cdot \sum_{S\subset E} q^{\rank(M)-\rank_M(S)}(-1)^{|S|}\cdot \prod_{e\notin S} (1+t_e) \prod_{e\in S} (1+qt_e).
    $$
\end{definition}

Indeed we have the following relation between the equivariant reduced characteristic polynomial and the pushforward of the top Chern class of the quotient bundle of $M$:

\begin{proposition}[Relating equivariant reduced characteristic polynomial and the pushforward]
    Let $P_M$ be as in the previous proposition. Then
    $$
        P_M\left(\frac{1}{q}, -1\right) = \frac{1}{(-q)^{|E|-1}} \cdot \widehat{\chi}_M(q)
    $$
\end{proposition}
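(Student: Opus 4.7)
The plan is a direct substitution: evaluate the closed form
$$P_M(\alpha,\beta) = \frac{1}{\alpha+\beta} \sum_{S\subseteq E} \alpha^{\crk(M)-\nl_M(S)}(-\beta)^{\nl_M(S)}\prod_{e\in S}(\alpha+t_e)\prod_{e\notin S}(\beta-t_e)$$
at $\alpha = 1/q$, $\beta = -1$, and then recognize the resulting sum over $S\subseteq E$ as $\widehat{\chi}_M(q)$ up to the claimed prefactor $(-q)^{1-|E|}$.

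First I would break the summand into its constituent factors under the substitution. One obtains $1/(\alpha+\beta) = q/(1-q)$, $(-\beta)^{\nl_M(S)} = 1$, and
$$\prod_{e\in S}(\alpha+t_e) = q^{-|S|}\prod_{e\in S}(1+qt_e),\qquad \prod_{e\notin S}(\beta-t_e) = (-1)^{|E|-|S|}\prod_{e\notin S}(1+t_e).$$
Combined with $\alpha^{\crk(M)-\nl_M(S)} = q^{\nl_M(S)-\crk(M)}$, the total exponent of $q$ inside each summand simplifies using $\crk(M) = |E|-\rk(M)$ and $\nl_M(S) = |S|-\rk_M(S)$ to
$$\nl_M(S) - \crk(M) - |S| = \rk(M) - \rk_M(S) - |E|.$$

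Pulling the $S$-independent factor $q^{-|E|}(-1)^{|E|}$ out of the sum and using $(-1)^{|E|-|S|} = (-1)^{|E|}(-1)^{|S|}$ (together with $(-1)^{-|S|} = (-1)^{|S|}$), the substitution yields
$$P_M(1/q,-1) = \frac{q^{1-|E|}(-1)^{|E|}}{1-q}\sum_{S\subseteq E} q^{\rk(M)-\rk_M(S)}(-1)^{|S|}\prod_{e\in S}(1+qt_e)\prod_{e\notin S}(1+t_e).$$
Comparing with the definition of $\widehat{\chi}_M(q)$, the prefactor is
$$\frac{q^{1-|E|}(-1)^{|E|}}{1-q} \;=\; \frac{-(-1)^{|E|}\cdot q^{1-|E|}}{q-1} \;=\; \frac{(-1)^{|E|-1}q^{1-|E|}}{q-1} \;=\; \frac{1}{(-q)^{|E|-1}(q-1)},$$
where the last equality uses $(-q)^{|E|-1} = (-1)^{|E|-1}q^{|E|-1}$. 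Multiplying by the factor $q-1$ that appears in $\widehat{\chi}_M(q)$ gives exactly $P_M(1/q,-1) = (-q)^{1-|E|}\widehat{\chi}_M(q)$, as claimed.

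The computation is essentially mechanical; the only step requiring care is the bookkeeping of signs, where one must track that the $(-1)^{|E|-|S|}$ arising from the factors $\beta - t_e$, the swap between $1/(1-q)$ and $1/(q-1)$, and the $(-1)^{|E|-1}$ absorbed into $(-q)^{|E|-1}$ all combine correctly to produce the claimed prefactor and not some residual sign.
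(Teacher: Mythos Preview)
Your proof is correct and follows essentially the same approach as the paper: direct substitution of $\alpha=1/q$, $\beta=-1$ into the closed form of $P_M$, followed by simplification of the powers of $q$ and the signs to recognize the sum defining $\widehat{\chi}_M(q)$. Your write-up is in fact slightly more explicit in tracking the exponent identity $\nl_M(S)-\crk(M)-|S|=\rk(M)-\rk_M(S)-|E|$ and the sign bookkeeping than the paper's version.
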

\begin{proof}
    $$
        P_M\left(\frac{1}{q}, -1\right) = \frac{1}{\frac{1}{q}-1}\sum_{S\subset E}\frac{1}{q^{\corank(M)-\nullity_M(S)}}\cdot 1^{\nullity_M(S)} \cdot \prod_{e\in S} \left(\frac{1}{q} + t_e\right) \prod_{e\notin S} (-1-t_e)=
    $$
    $$
        = \frac{-q}{q-1}\sum_{S\subset E} q^{\rank(M)-\rank_M(S)}q^{-|E|}q^{|S|-|S|}\cdot (-1)^{|E\setminus S|}\cdot \prod_{e\in S} (1+qt_e) \prod_{e\notin S}(1+t_e)=
    $$
    $$
        = \frac{1}{(-q)^{|E|-1}}\cdot \frac{1}{q-1}\sum_{S\subset E}\cdot (-1)^{|S|}\cdot \prod_{e\notin S}(1+t_e)\prod_{e\in S} (1+qt_e) =
        \frac{1}{(-q)^{|E|-1}} \cdot \widehat{\chi}_M(q)
    $$
\end{proof}
Notice that setting $t_e=0$ for all $e\in E$, 
we recover the usual characteristic polynomial. More generally, we have the following result.
\begin{proposition}[Combinatorial interpretation of the equivariant reduced characteristic polynomial]
\label{usual_char_poly}
    Let $A\subset E$. Then setting $t_e=-1$ for $e\in A$ and $t_e=0$ for $e\notin A$ in the  equivariant reduced characteristic polynomial $\widehat{\chi}_M(q)$ gives
    $$
        (1-q)^{|A|}(-1)^{|A|}\chi_{M\contract A}(q).
    $$
\end{proposition}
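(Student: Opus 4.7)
The plan is to deduce this proposition as a direct consequence of the evaluation formula in Proposition~\ref{evaluating_t_e} combined with the definition of $\widehat{\chi}_M(q)$. The key observation is that the substitution $t_e = -1$ is precisely $t_e = -1/s$ when $s=1$, which matches the hypothesis of the first identity in Proposition~\ref{evaluating_t_e}.

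First I would unpack the definition $\widehat{\chi}_M(q) = \frac{(-1)^{\rank(M)}}{q-1}\widehat{T}_M(1-q,0,q,1)$, so that the problem reduces to evaluating $\widehat{T}_M(1-q,0,q,1)$ under the prescribed substitution. Setting the $t_e$ for $e \in A$ first, I would apply Proposition~\ref{evaluating_t_e} with $(x,y,r,s)=(1-q,0,q,1)$ to get
\[
    \left.\widehat{T}_M(1-q,0,q,1)\right|_{t_e=-1,\,e\in A} = (1-q)^{|A|}(-1)^{\nullity_M(A)}\widehat{T}_{M\contract A}(1-q,0,q,1).
\]
Next, since $M\contract A$ has ground set $E\setminus A$, setting $t_e=0$ for $e\notin A$ is exactly the substitution that recovers the classical Tutte polynomial from the equivariant one, so
\[
    \left.\widehat{T}_{M\contract A}(1-q,0,q,1)\right|_{t_e=0,\,e\notin A} = T_{M\contract A}(1-q,0).
\]

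Combining these and pulling out the $\frac{(-1)^{\rank(M)}}{q-1}$ factor, the substituted value of $\widehat{\chi}_M(q)$ becomes
\[
    \frac{(-1)^{\rank(M)}(-1)^{\nullity_M(A)}}{q-1}(1-q)^{|A|}T_{M\contract A}(1-q,0).
\]
The final step is a purely arithmetic one on the sign: using $\rank(M) = \rank(M\contract A) + \rank_M(A)$ and $\nullity_M(A) = |A| - \rank_M(A)$, the $\rank_M(A)$ terms cancel modulo $2$ and we are left with $(-1)^{\rank(M\contract A) + |A|}$. Absorbing $(-1)^{\rank(M\contract A)}/(q-1)$ into $\chi_{M\contract A}(q)$ via its definition yields the asserted expression $(1-q)^{|A|}(-1)^{|A|}\chi_{M\contract A}(q)$.

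There is no real obstacle here; the only place one might slip is in the sign bookkeeping, so I would write out the exponent manipulation explicitly. In particular, the order of substitution matters conceptually but not computationally: one could equivalently first set all $t_e=0$ for $e\notin A$, then apply a variant of Proposition~\ref{evaluating_t_e} on the smaller matroid, but applying the contraction identity first keeps the argument cleaner and avoids reproving an evaluation lemma.
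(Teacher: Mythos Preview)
Your proposal is correct and follows essentially the same route as the paper: both apply Proposition~\ref{evaluating_t_e} with $(x,y,r,s)=(1-q,0,q,1)$ to handle the substitution $t_e=-1$ for $e\in A$, then do the same sign bookkeeping via $\rank(M)+\nullity_M(A)=\rank(M\contract A)+|A|$. The only cosmetic difference is ordering: the paper first recognizes $\widehat{\chi}_{M\contract A}(q)$ and then sets the remaining $t_e=0$, whereas you set $t_e=0$ first to pass to $T_{M\contract A}(1-q,0)$ and then identify $\chi_{M\contract A}(q)$.
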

\begin{proof}
    We will use Proposition \ref{evaluating_t_e} to show that substituting $t_e=-1$ for $e\in A$ and $t_e=0$ for $e\notin A$ yields the desired result.
    $$
        \widehat{\chi}_M(q)|_{t_e=-1,e\in A} = \frac{(-1)^{\rank(M)}}{q-1}\cdot \widehat{T}_M(1-q, 0, q, 1)|_{t_e=-1,e\in A} = 
    $$
    $$
        =\frac{(-1)^{\rank(M)}}{q-1} \cdot \left(1-q \right)^{|A|}(0-1)^{\nullity_M(A)}\widehat{T}_{M\contract A}(1-q, 0, q, 1)=
    $$
    $$
        =\frac{(-1)^{\rank(M\contract A)}}{q-1} \cdot \left(q-1 \right)^{|A|}\widehat{T}_{M\contract A}(1-q, 0, q, 1) = (1-q)^{|A|}(-1)^{|A|}\widehat{\chi}_{M\contract A}(q).
    $$
    By setting the remaining $t_e$ to zero we obtain the usual reduced characteristic polynomial.
\end{proof}
An easy corollary to the valuativity of $\widehat{T}_M$ (Proposition \ref{valuativity}) is that the equivariant reduced characteristic polynomial is also valuative. Alternatively this also follows immediately from noting that $c_{\rank{M}}([\Q_M])$ is valuative by \cite{tautological-classes}, Proposition 5.6 and the pushforward map is additive.
\begin{corollary}
The assignment $M\mapsto \widehat{\chi}_M$ that assigns to a matroid $M$ its equivariant reduced characteristic polynomial is valuative.
\end{corollary}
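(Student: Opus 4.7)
The plan is to deduce valuativity of $\widehat\chi_M$ directly from the valuativity of $\widehat T_M$ (Proposition \ref{valuativity}), using the defining identity
\[
\widehat\chi_M(q) = \frac{(-1)^{\rank(M)}}{q-1}\cdot\widehat T_M(1-q,0,q,1).
\]

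First, I would fix a rank $r\in\N_0$ and a finite ground set $E$, and work in the target abelian group $G := \Z[E][q]$. By Proposition \ref{valuativity}, the assignment $M\mapsto \widehat T_M(x,y,r,s)$ extends to a valuative group morphism on $\text{Mat}_r(E)$, and any substitution of the variables $x,y,r,s$ preserves this property. Specializing to $(x,y,r,s)=(1-q,0,q,1)$ then shows that the map $M\mapsto \widehat T_M(1-q,0,q,1)$ extends to a valuative morphism $\text{Mat}_r(E)\to G$.

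Next I would scale by $(-1)^r$, which is a fixed constant on all matroids of rank $r$ and so preserves valuativity. The only remaining operation is division by $q-1$; this is where the main (and only) subtlety lies, since $q-1$ is not invertible in $G$. To handle it, observe that by construction $\widehat\chi_M(q)\in G$ for each individual matroid $M$, so $\widehat T_M(1-q,0,q,1)$ always lies in the principal subgroup $(q-1)\cdot G\subset G$. Division by $q-1$ is a well-defined group isomorphism from this subgroup onto $G$, so composing with it preserves valuativity.

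Putting everything together: given an integer combination $\sum_M z_M M\in\text{Mat}_r(E)$ with $\sum_M z_M \mathbbm 1_{P(M)}\equiv 0$, valuativity of $\widehat T_M$ yields $\sum_M z_M \widehat T_M(1-q,0,q,1)=0$ in $G$; applying the constant $(-1)^r$ and then dividing by $q-1$ gives $\sum_M z_M \widehat\chi_M(q)=0$, as required. I anticipate no significant obstacle; the lone point requiring attention is the bookkeeping around the non-invertible factor $q-1$, which is resolved by noting that the evaluation is divisible by $q-1$ matroid-by-matroid. One could alternatively derive the result by starting from valuativity of $c_{\crk(M)}([\Qbundle_M])$ (cited in the excerpt from \cite{tautological-classes}) and using additivity of pushforward, but the argument above is shorter given the tools already assembled.
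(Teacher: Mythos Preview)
Your proposal is correct and follows exactly the route the paper sketches: deduce valuativity of $\widehat\chi_M$ from Proposition~\ref{valuativity} via the specialization $(x,y,r,s)=(1-q,0,q,1)$, scaling by the rank-dependent constant $(-1)^r$, and then dividing by $q-1$; you even mention the same alternative via valuativity of $c_{\crk(M)}([\Qbundle_M])$ that the paper records. Your explicit handling of the non-invertible factor $q-1$ (noting that $(-1)^{\rk(M)}\widehat T_M(1-q,0,q,1)\in(q-1)\Z[E][q]$ matroid-by-matroid, so division by $q-1$ is a group isomorphism on that subgroup) is a detail the paper leaves implicit but which is indeed the only point requiring care.
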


Since we already computed $\widehat{T}_M$ for $M=U_{n,n+1}$ in Examples \ref{Example:cycle} and \ref{Example:cycle-continued}, we can now easily obtain its equivariant reduced characteristic polynomial.
\begin{example}
    Let $M=U_{n,n+1}$, recall from \ref{Example:cycle} that we have \begin{align*}
        \widehat{T}_M(x,y,r,s)=\sum_{k=0}^{n+1}\left(\frac{(xs-s+r)^{k}x^{n+1-k}-r^{k}}{x-1}+(y-1)r^{k}\right)\elem_k(\{t_e\}_{e\in E}).
    \end{align*}
    Note that the substitution $(x,y,r,s)=(1-q,0,q,1)$ makes the term $(xs-s+r)^k$ vanish for $k>0$, so we split the sum to get \begin{align*}
        \widehat{\chi}_M(q)=\frac{(-1)^{n}}{q-1}\left(\frac{(1-q)^{n+1}-1}{-q}-1+\sum_{k=1}^{n+1}(q^{k-1}-q^k)\elem_k(\{t_e\}_{e\in E})\right)
    \end{align*}
    In fact if we fix some $k$-element subset $A$ and set $t_e=-1$ for $e\in A$ and $t_e=0$ else, only the terms $t_S$ for $S\subseteq A$ are non-zero. We hence obtain contributions from the first $k$ summands only. Hereby the summand corresponding to a cardinality $l\leq k$ subset of $A$ contributes once for each $l$-element subset of $A$, i.e. with factor $(-1)^{l}\binom{k}{l}$. We hence get  \begin{align*}
        \widehat{\chi}_M(q)|_{t_e=-1,e\in A,t_e=0, e\notin A}=\frac{(-1)^{n}}{q-1}\left(\frac{(1-q)^{n+1}-1}{-q}-1+\sum_{l=1}^{k}\binom{k}{l}(q^{l-1}-q^l)(-1)^l\right).
    \end{align*}
    Careful manipulations of this expression indeed yield \begin{align*}
        \widehat{\chi}_M(q)|_{t_e=-1,e\in A,t_e=0, e\notin A}=(1-q)^k(-1)^k\frac{(-1)^{n-k}}{q-1}\sum_{i=1}^{n-k}(1-q)^i= (1-q)^k(-1)^k\chi_{U_{n-k,n+1-k}}(q)
    \end{align*}
    and hence confirms the connection to the usual reduced characteristic polynomial of an $n+1-k$-cycle, which was predicted by \ref{usual_char_poly}.
\end{example}

\subsection{Uniqueness of evaluation for matroids}
We have seen that after evaluating at $(1,1,1,0)$ in $\widehat{T}_M$, $t_A$ has non-zero coefficient if and only if $A$ is independent. In particular, we can recover $M$ from $\widehat{T}_M$. It turns out there are not many evaluations destroying this uniqueness as long as we keep the $t_e$-variables untouched.
\begin{proposition}
    Let $R$ be an integral domain and pick $x,y,r,s\in R$. Assume there exist two different matroids $M_1,M_2$ on the same groundset $E$ such that  $\widehat{T}_{M_1}(x,y,r,s) = \widehat{T}_{M_2}(x,y,r,s)$ holds in $R[E]$, then either $r = s$ or $(x-1)(y-1) = 1$.
\end{proposition}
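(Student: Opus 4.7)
My plan is to prove the contrapositive: assuming $r\neq s$ and $(x-1)(y-1)\neq 1$, I will show that $\widehat{T}_M(x,y,r,s)\in R[E]$ uniquely determines the matroid $M$. The argument splits into two main steps, each using one of the two hypotheses: first I would extract from $\widehat{T}_M$ the scalar coefficients
\[
c_A(M):=(x-1)^{\rank(M)-\rank_M(A)}(y-1)^{\nullity_M(A)},\qquad A\subseteq E,
\]
and then I would reconstruct $M$ combinatorially from the collection $(c_A(M))_{A\subseteq E}$.

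For the first step I would rewrite $\widehat{T}_M=\sum_{A\subseteq E}c_A(M)\prod_{e\in A}(1+rt_e)\prod_{e\notin A}(1+st_e)$ directly from the definition. Expanding the products yields $[t_B]\widehat{T}_M=\sum_A r^{|A\cap B|}s^{|B\setminus A|}c_A(M)$, so that the passage from the vector $(c_A(M))_A$ to the vector of multilinear coefficients of $\widehat{T}_M$ is realized by a $2^{|E|}\times 2^{|E|}$ matrix $\Phi$ which factors as the $|E|$-fold Kronecker product of $\bigl(\begin{smallmatrix}1&1\\s&r\end{smallmatrix}\bigr)$. Since the latter has determinant $r-s$, the determinant of $\Phi$ is a nonzero power of $r-s$ in the integral domain $R$, making $\Phi$ injective. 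Hence $\widehat{T}_{M_1}=\widehat{T}_{M_2}$ forces $c_A(M_1)=c_A(M_2)$ for every $A\subseteq E$.

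For the second step, set $\alpha=x-1$ and $\beta=y-1$. Using $\nullity_M(A\cup\{e\})=\nullity_M(A)+1-\delta$ with $\delta:=\rank_M(A\cup\{e\})-\rank_M(A)\in\{0,1\}$, a direct computation gives
\[
c_{A\cup\{e\}}(M)=\alpha^{-\delta}\beta^{1-\delta}\,c_A(M)
\]
for every $e\in E\setminus A$. Hence $e\in\operatorname{cl}_M(A)$ (i.e.\ $\delta=0$) is equivalent to $c_{A\cup\{e\}}(M)=\beta\, c_A(M)$, whereas $e\notin\operatorname{cl}_M(A)$ (i.e.\ $\delta=1$) is equivalent to $\alpha\, c_{A\cup\{e\}}(M)=c_A(M)$. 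Assuming $\alpha,\beta\ne 0$, the coefficient $c_A(M)$ is a product of nonzero factors and so is nonzero in the integral domain $R$; if both identities held simultaneously they would combine to $(1-\alpha\beta)c_A(M)=0$, forcing $\alpha\beta=1$ and contradicting the hypothesis. Thus exactly one identity holds at each pair $(A,e)$, revealing the value of $\delta$ and hence the entire closure operator $\operatorname{cl}_M$, which determines $M$ by the standard equivalence between matroids and closure operators.

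The main obstacle is the degenerate situation in which $\alpha=0$ or $\beta=0$, where the ratio-based argument of the second step breaks down because some $c_A(M)$ vanish. I would dispatch these cases directly from the vanishing pattern: if $\alpha=0$ then $c_A(M)\ne 0$ precisely when $\rank_M(A)=\rank(M)$, i.e.\ when $A$ is spanning (and precisely when $A$ is a basis, if additionally $\beta=0$), and symmetrically for $\beta=0$. Since a matroid is uniquely determined by its spanning sets (respectively its bases, or by symmetry its independent sets), $\widehat{T}_M$ determines $M$ in these cases as well. Combining all sub-cases completes the proof under the standing assumptions $r\neq s$ and $(x-1)(y-1)\neq 1$.
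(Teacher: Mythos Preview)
Your proof is correct and takes a genuinely different route from the paper's.

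The paper argues by contradiction via a minimal counterexample: choosing $M_1\neq M_2$ with $|E|$ smallest, it performs a three-way case analysis on whether some element is general in both matroids, general in one and a loop/coloop in the other, or both matroids consist entirely of loops and coloops. Each case is handled by applying the deletion--contraction relation for $\widehat T$, comparing coefficients in $t_{\hat e}$, and in the second case invoking explicit computations with the matroids $U_{n,n+1}$ and $U_{n,n}$.

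Your argument is more structural and arguably cleaner. The first step isolates exactly where the hypothesis $r\neq s$ enters: it is precisely the invertibility (over the fraction field, hence injectivity over $R$) of the change-of-basis matrix $\Phi=\bigl(\begin{smallmatrix}1&1\\s&r\end{smallmatrix}\bigr)^{\otimes |E|}$, which recovers the scalars $c_A(M)=(x-1)^{\rk(M)-\rk_M(A)}(y-1)^{\nl_M(A)}$ from the multilinear coefficients of $\widehat T_M$. The second step isolates where $(x-1)(y-1)\neq 1$ enters: it is exactly the condition under which the two possible ratios $c_{A\cup\{e\}}/c_A\in\{\beta,\alpha^{-1}\}$ are distinct, so that the closure relation $e\in\mathrm{cl}_M(A)$ can be read off unambiguously. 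The degenerate cases $\alpha=0$ or $\beta=0$ are dispatched by the vanishing pattern of the $c_A$'s, which records the spanning sets, independent sets, or bases.

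What each approach buys: the paper's inductive argument stays close to the deletion--contraction machinery developed earlier and reuses Example~\ref{all-loops-coloops}; your linear-algebraic approach makes the roles of the two hypotheses transparent, avoids case analysis on the matroid structure, and would generalize readily to related coefficient-extraction problems.
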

\begin{proof}
For brevity we will write $\widehat{T}_{M}$ instead of $\widehat{T}_{M}(x,y,r,s)$ for any matroid $M$. Also note that since we can pass to the fraction field of $R$, we may assume that $R$ is a field.

We will proceed by a contradiction. Suppose that $M_1$ and $M_2$ are different matroids, $r\neq s$, $(x-1)(y-1)\neq 1$ but $\widehat{T}_{M_1}=\widehat{T}_{M_2}$. Furthermore, suppose that the cardinality of $E$ is the smallest possible among such examples of $M_1$ and $M_2$.
    We consider three cases:
    \begin{enumerate}
        \item There exists $\hat{e}\in E$ such that it is general in both $M_1$ and $M_2$. Then
        $$
            (1+s t_{\hat{e}})\widehat{T}_{M_1\delete \hat{e}} + (1+r t_{\hat{e}})\widehat{T}_{M_1\contract \hat{e}} = \widehat{T}_{M_1} = \widehat{T}_{M_2} = (1+s t_{\hat{e}})\widehat{T}_{M_2\delete \hat{e}} + (1+r t_{\hat{e}})\widehat{T}_{M_2\contract \hat{e}}.
        $$
        These polynomials can be regarded as one variable polynomials in $t_{\hat{e}}$ with coefficients in $R[t_e\mid e \in E\setminus \{\hat{e}\}]$. Their equality yields the equality of their corresponding coefficients. Comparing the constant term we get
        \begin{equation}\label{comp}
            \widehat{T}_{M_1\delete \hat{e}} - \widehat{T}_{M_2\delete \hat{e}} = \widehat{T}_{M_2\contract \hat{e}} - \widehat{T}_{M_1\contract \hat{e}}
        \end{equation}
        and similarly for the linear terms we get
        \begin{equation} \label{comp2}
                       s \widehat{T}_{M_1\delete \hat{e}} - s\widehat{T}_{M_2\delete \hat{e}} = r \widehat{T}_{M_2\contract \hat{e}} - r \widehat{T}_{M_1\contract \hat{e}}.
        \end{equation}
        Denoting $c$ the constant term, that is equal to both sides of \eqref{comp}, we can rewrite \eqref{comp2} as $(s-r) c = 0$. Since $r \neq s$, we obtain $c = 0$. By minimality of $|E|$ this can be true only if $M_1\setminus \hat{e}=M_2 \setminus \hat{e}$ and $M_1\contract \hat{e}=M_2\contract \hat{e}$. But this implies that $M_1$ and $M_2$ are the same matroid, a contradiction \footnote{Notice that we really use the fact that $M_1\setminus \hat{e}=M_2 \setminus \hat{e}$ and $M_1\contract \hat{e}=M_2\contract \hat{e}$ are equalities of labeled matroids. In general for a matroid $M$ and an element $e$ from its groundset, it is not true that knowing $M\contract e$ and $M\delete e$ up to isomorphism uniquely determines $M$ up to isomorphism.}.

        \item There exists $\hat{e}\in E$ such that it is general in one of $M_1$, $M_2$ but it is loop or coloop in the other one. Since the situation is symmetric with respect to $M_1$ and $M_2$, let us presume $\hat{e}$ is general in $M_1$. Similarly, passing to dual matroids changes $\hat e$ being a coloop in $M_2$ to it being a loop, so let us presume that $\hat e$ is a loop in $M_2$.

        Then we have
        $$
             (1+s t_{\hat{e}})\widehat{T}_{M_1\delete \hat{e} } + (1+r t_{\hat{e}})\widehat{T}_{M_1\contract \hat{e} } = \widehat{T}_{M_1} = \widehat{T}_{M_2} = (y + (yr - r + s)t_{\hat{e}})\widehat{T}_{M_2\delete \hat{e} }.
        $$
        Comparing the coefficients with respect to the variable $t_{\hat{e}}$, we get
        \begin{align}\label{eq:comp_const_b}
            \widehat{T}_{M_1\delete \hat{e}} + \widehat{T}_{M_1\contract \hat{e}} &= y\widehat{T}_{M_2\delete \hat{e}},\\
            \label{eq:comp_lin_b}
            s \widehat{T}_{M_1\delete \hat{e}} + r \widehat{T}_{M_1\contract \hat{e}} &= (yr - r + s)\widehat{T}_{M_2\delete \hat{e}}.
        \end{align}
        We can substitute $y\widehat{T}_{M_2\delete \hat{e}}$ from the first expression into the second, obtaining
        $$
            (s - r)  \widehat{T}_{M_1\delete \hat{e}} = (s - r) \widehat{T}_{M_2\delete \hat{e}}.
        $$
        Dividing by $s - r$ we obtain $ \widehat{T}_{M_1\delete \hat{e}} = \widehat{T}_{M_2\delete \hat{e}}$. Thanks to the minimal choice of $M_1$ and $M_2$, this yields $M_1\delete \hat{e} = M_2\delete \hat{e}$.
        
        Now, if there were a general element in the matroid $M_1\delete \hat{e} = M_2\delete \hat{e}$, it would remain general in both $M_1$ and $M_2$. This would allow us to reduce to the case (a). Consequently, if suffices for us to treat just the case where no element of $M_1\delete \hat{e} = M_2\delete \hat{e}$ is general.

        First let us assume $M_1$ has no loops nor coloops. As deletion does not create any new loops, $M_1\delete \hat{e}$ is a collection of coloops. So $M_1 = U_{n,n+1}$. Further, we deduce that $M_2$ consists of $n$ coloops and one loop.
         If $s\neq 0$, we can use Proposition~\ref{evaluating_t_e}. We choose some $e'\in E\setminus \hat{e}$ and set $t_{e'}$ to $-\frac{1}{s}$.  
        The element $e'$ is a coloop in $M_2$ and a general element in $M_1$.
        \begin{align*}
            \left(1-\frac{r}{s}\right) \widehat{T}_{M_1\contract e'} = \widehat{T}_{M_1}|_{t_{e'}=-\frac{1}{s}} &= \widehat{T}_{M_2}|_{t_{e'}=-\frac{1}{s}} = \left(1-\frac{r}{s}\right) \widehat{T}_{M_2\contract e'}\\
            \widehat{T}_{M_1\contract e'} &= \widehat{T}_{M_2\contract e'}
        \end{align*}
        Since $M_1\contract e' = U_{n-1,n}$ and $M_2\contract e'$ is a collection of $n-1$ coloops and one loop, we get a contradiction with minimality of  $E$. So $s=0$.
        
        From $\eqref{eq:comp_const_b}$ we get
        \begin{align*}
            \widehat{T}_{U_{n-1,n}} &= (y-1)\widehat{T}_{U_{n,n}}
        \end{align*}
         We can expand those expressions by the definition of the equivariant Tutte polynomial. Since $\nullity_M(S)=0$ and $\rank_M(S) = |S|$ for independent $S$, it will simplify to:
        \begin{align*}
             (x-1)^{0}(y-1)^1\prod_{e\in E}(1+r t_e) + \sum_{S\subsetneq E}&(x-1)^{n-1 - |S|} (y-1)^0 \prod_{e\in S}(1+r t_e) \prod_{e\notin S} (1+0\cdot t_e)=\\
             =(y-1)\sum_{S\subset E} &(x-1)^{n - |S|} (y-1)^0\prod_{e\in S}(1+r t_e) \prod_{e\notin S} (1+0\cdot t_e),\\
             \sum_{S\subsetneq E}(x-1)^{n-1 - |S|} \prod_{e\in S}(1+r t_e)&=(y-1)(x-1)\sum_{S\subsetneq E} (x-1)^{n-1 - |S|}\prod_{e\in S}(1+r t_e)
        \end{align*}
        We get a contradiction with $(x-1)(y-1) \neq 1$, unless $$
            \sum_{S\subsetneq E}(x-1)^{n-1 - |S|} \prod_{e\in S}(1+r t_e) = 0.
        $$
        Then the coefficients of $t_A$ for all $A\subset E$ must be zero. We choose $A$ such that $|A|=n-1$. Then coefficient of $t_A$ is simply $r^{n-1}$, since $t_A$ can appear only if $S=A$. Since $s=0$, $r\neq 0$ and this coefficient is non-zero.

        Now we assume there exists $e'\in E$ such that it is a loop in $M_1$. It has to be a loop in $M_2$ too. Then 
        $$
            (y+(yr + s - r)t_e)\widehat{T}_{M_1\delete e'} = \widehat{T}_{M_1} = \widehat{T}_{M_2} = (y+(yr + s - r)t_e)\widehat{T}_{M_2\delete e'}.
        $$
        Since $M_2\delete e'$ is collection of loops and coloops and $M_1\delete e'$ is not, certainly $M_1\delete e' \neq M_2\delete e'$. So $y+(yr + s - r)t_e = 0$, else we get a contradiction by the minimality of $E$ again. But this 
        %\mate{ in general style could be improved above. Here two captial letters.} 
        yields $y=0$, $s = r$, a contradiction.

        The case when $e'$ is a coloop can be treated analogously.

        \item Both $M_1$ and $M_2$ are collections of loops and coloops. Let us denote $f_e := [(y-1)(1+r t_{e})+(1+s t_{e})]$ and $g_e := [(x-1)(1+s t_{e})+(1+r t_{e})]$. If $L_1$, resp. $L_2$, is the set of all loops in $M_1$, resp. $M_2$, then we get from Example~\ref{all-loops-coloops} that
        $$\label{product-equation}
        \prod_{e\in L_1} f_e \prod_{e\notin L_1}g_e = \widehat{T}_{M_1} = \widehat{T}_{M_2}=\prod_{e\in L_2} f_e \prod_{e\notin L_2}g_e.$$
        Note that since $R$ is a field, $R[E]$ is a UFD and the $f_e,g_e$ are linear in the $t_e$-variables and therefore irreducible. Hence the factors on the left- and right-hand side of \ref{product-equation} are pairwise associated to each other. As for any $e\in E$, the factors $f_e$ or $g_e$ respectively only use the variable $t_e$ and none of the others, we know exactly which factors are paired up.
         By our assumption the matroids are different, so there exists $\hat{e}\in E$ that is a loop in one of the matroids and a coloop in the other one. Without loss of generality we may assume that $\hat{e}\in L_1\setminus L_2$, then we get that $f_{\hat{e}}$ and $g_{\hat{e}}$ are associated. Being both linear in $t_{\hat e}$, this means they differ only by a constant multiple (with respect to $t_{\hat e}$). Writing them as $f_{\hat{e}} = at_{\hat{e}} + b$ and $g_{\hat{e}} = ct_{\hat{e}} + d$, with $a = (y-1)r + s, b = (y-1) + 1, c = (x-1)s +r$ and $d = (x-1) + 1$, we get the equation $ad = bc,$ i.e.
        \begin{align*}
            y\cdot (xs-s+r) &= x\cdot (yr-r+s)\\
            (x-1)(y-1)(s - r) &= s - r\\
            (x-1)(y-1) &= 1,
        \end{align*}
        and we arrive at the desired contradiction.
    \end{enumerate}
\end{proof}

Let us note that the last proposition heavily relies on the fact $M_1$ and $M_2$ are on the same groundset. Usually, the groundset can be read off from the variables $t_e$. But for example if $x=1$, $r=0$, contraction relation for a coloop $\hat{e}$ from a matroid $M$ simplifies to $\widehat{T}_M = \widehat{T}_{M\contract \hat{e}}$, so the variable $t_{\hat{e}}$ never appears. Analogously, if $y=1$ and $s=0$, $t_{\hat{e}}$ never appears if $\hat{e}$ is a loop. Those evaluations together with $E$ still uniquely determine the matroid, namely the evaluation $(1,1,1,0)$ directly gives the independent sets.

% \textcolor{green}{Update references! (I.e., arxiv preprint has probably been properly published since then.}


\begin{thebibliography}{99}


\bibitem[AF23]{Fulton}
David Anderson and William Fulton, {\itshape Equivariant Cohomology in Algebraic Geometry}, Cambridge Studies in Advanced Mathematics, 210. Cambridge University Press, 2023.

\bibitem[AHK18]{CombinatorialHodge}
Karim Adiprasito, June Huh, Eric Katz, {\itshape Hodge theory for combinatorial geometries}, Annals of Mathematics 188 (2018), 381–452.

\bibitem[BEST23]{tautological-classes}
Andrew Berget, Christopher Eur, Hunter Spink and Dennis Tseng, {\itshape Tautological classes of matroids}, Invent. math. 233, 951--1039 (2023).

\bibitem[BH20]{LorentzianPolynomials}
Petter Branden, June Huh, {\itshape Lorentzian Polynomials}, Annals of Mathematics 192 (2020), p. 821-891.

\bibitem[BO92]{tutte and applications}
Thomas Brylawski and James Oxley, {\itshape The Tutte polynomial and its applications}, Matroid applications, Encyclopedia
Math. Appl., vol. 40, Cambridge Univ. Press, Cambridge, 1992, pp. 123–225.

\bibitem[CS74]{Chang-S}
Theodore Chang, Tor Skjelbred, {\itshape The Topological Schur Lemma and Related Results}, Annals of Mathematics 100 (1974), 307--321. 

\bibitem[CLS11]{cls}
David Cox, John Little, Hal Schenck, {\itshape Toric Varieties}, Graduate Studies in Mathematics, 124. American Mathematical Society, Providence, RI, 2011.

\bibitem[DMS21]{DMS}
Rodica Andreea Dinu, Mateusz Micha\l ek, Tim Seynnaeve, {\itshape Applications of intersection theory: from maximum likelihood to chromatic polynomials}, \url{https://doi.org/10.48550/arXiv.2111.02057}.


\bibitem[EHL23]{Stel}
Christopher Eur, June Huh, Matt Larson, {\itshape Stellahedral geometry of matroids}, Forum of Mathematics, Pi, 11, E24. doi:10.1017/fmp.2023.24 (2023).

\bibitem[FS12]{Fink-Speyer}
Alex Fink, David Speyer, {\itshape $K$-classes for matroids and equivariant localization},
Duke Mathematical Journal 161 no. 14 (2012), 2699—2723.

\bibitem[GKM97]{modernCS}
Mark Goresky, Robert Kottwitz, Robert MacPherson, {\itshape Equivariant cohomology, Koszul duality, and the localization theorem},
Invent. Math. 131 (1997), 25--83.

\bibitem[GKM24]{GKM24}
Vincenzo Galgano, Hanieh Keneshlou, Mateusz Micha\l ek, {\itshape  Equivariant Euler characteristics on permutohedral varieties}, 
Lefschetz Properties: Current and New Directions, Springer INdAM Series 59, pp. 129--154 (2024).

\bibitem[Hu12]{Huh}
June Huh, {\itshape Milnor numbers of projective hypersurfaces and the chromatic polynomial of graphs}
Journal of the
American Mathematical Society
Volume 25, Number 3, July 2012, Pages 907–927.

\bibitem[Hu16]{HuhPermutohedral}
June Huh, {\itshape Tropical geometry of matroids}
  Current developments in mathematics,
    2016, Nr. 1.

\bibitem[Mi23]{Mateusz-paper}
Mateusz Micha\l ek, {\itshape Enumerative geometry meets statistics, combinatorics
and topology}, Notices Amer. Math. Soc. 70 (2023), 588--597.
    
\bibitem[MS21]{Mateusz-Bernd-Book}
Mateusz Micha\l ek, Bernd Sturmfels, {\itshape Invitation to nonlinear algebra}, Graduate Studies in
Mathematics, 211. American Mathematical Society, Providence, RI, 2021.

\bibitem[Ox11]{matroid theory}
James Oxley, {\itshape Matroid Theory}, 2 ed., Oxford Graduate Texts in Mathematics, vol. 21, Oxford University Press, Oxford,
2011. 2, 27.

\bibitem[So05]{PottsSokal} Alan D. Sokal, {\itshape The multivariate Tutte polynomial (alias Potts model)
for graphs and matroids}, Surveys in Combinatorics, Cambridge University Press, 2005 p. 173-226.


\end{thebibliography}
\end{document}